\newtheorem{théo}{Theorem}[section]
\newtheorem{prop}[théo]{Proposition}
\newtheorem{lemm}[théo]{Lemma}
\newtheorem{corr}[théo]{Corollary}
\theoremstyle{definition}
\newtheorem{rema}[théo]{Remark}
\newtheorem{hypo}{Hypothesis}[section]
\def\transpose{{}^t\!}
\DeclareMathOperator{\tr}{Tr}
\newcommand{\E}{\mathbb{E}}
\newcommand{\M}{\mathscr{M}^2(\mathbb{R}_+,\mathbb{R}^{1 \times d})}
\newcommand{\der}{\mathrm{d}}
\newcommand{\R}{\mathbb{R}}
\newcommand{\Rd}{\mathbb{R}^d}
\newcommand{\G}{\overline{G}}
\newcommand{\Pb}{\mathbb{P}}
\newcommand{\N}{\mathbb{N}}
\newcommand{\Q}{\mathbb{Q}}
\newcommand{\Pt}{\mathscr{P}_t}
\newcommand{\LL}{\mathscr{L}}
\newcommand{\OO}{\mathscr{O}}
\newcommand{\SSS}{\mathscr{S}^2}
\newcommand{\F}{\mathscr{F}}
\renewcommand{\t}[1]{\text{#1}}
\numberwithin{equation}{section}
\begin{document}

\begin{frontmatter}

\title{Ergodic BSDEs and related PDEs with Neumann boundary conditions under weak dissipative assumptions}

 \author[pym]{P.Y. Madec\corref{cor1}}
 \ead{pierre-yves.madec@univ-rennes1.fr}

\cortext[cor1]{Tel.:+33 02 23 23 53 05}
 
\address[pym]{IRMAR, Université de Rennes 1, Campus de Beaulieu, 35042 Rennes Cedex, France.}

\begin{abstract}
We study a class of ergodic BSDEs related to PDEs with Neumann boundary conditions. The randomness of the driver is given by a forward process under weakly dissipative assumptions with an invertible and bounded diffusion matrix. Furthermore, this forward process is reflected in a convex subset of $\R^d$ not necessarily bounded. We study the link of such EBSDEs with PDEs and we apply our results to an ergodic optimal control problem.
\end{abstract}
 
\begin{keyword}
  backward stochastic differential equations \sep weakly dissipative drift \sep Neumann boundary conditions \sep ergodic partial differential equations \sep optimal ergodic control problem
\end{keyword}
\end{frontmatter}

\section{Introduction}
In this paper we study the following ergodic backward stochastic differential equation (EBSDE in what follows) in finite dimension and in infinite horizon: $\forall t,T \in \R_+, ~~0 \leq t \leq T <+\infty$: 
\begin{align}\label{EBSDE intro Neumann}
Y_t^x = Y_T^x + \int_t^T [\psi(X_s^x,Z_s^x) - \lambda] \der s + \int_t^T [g(X_s^x)-\mu]\der K_s^x - \int_t^T Z_s^x \der W_s,
\end{align}
where the given data satisfy:
\begin{itemize}
\item $W$ is an $\R^d$-valued standard Brownian motion;
\item $G = \{ \phi > 0 \}$ is an open convex subset of $\R^d$ with smooth boundary;
\item $x \in G$;
\item $X^x$ is a $\G$-valued process starting from $x$, and $K^x$ is a non decreasing real valued process starting from $0$ such that the pair $(X^x,K^x)$ is a solution of the following reflected stochastic differential equation (SDE in what follows):\\ \vspace{-0,5 cm}
\begin{align*}
&X_t^x = x + \int_0^t f(X_s^x) \der s + \int_0^t \sigma (X_s^x) \der W_s + \int_0^t \nabla \phi(X_s^x) \der K_s^x, ~~t \geq 0,\\
&K_t^x = \int_0^t \mathds{1}_{\{ X_s^x \in \partial G \}} \der K_s^x, ~~ K^x_{\cdot} \t{~ is non-decreasing},
\end{align*}
\item $\psi: \R^d \times \R^{1 \times d} \rightarrow \R$ is  measurable and $g : \R^d \rightarrow \R$ is measurable; 
\item $\lambda$ and $\mu$ belong both to $\R$.  If $\lambda$ is given then $\mu$ is unknown and if $\mu$ is given then $\lambda$ is unknown.
\end{itemize}
Therefore, the unknown is either the triplet $(Y^x,Z^x,\lambda)$ if $\mu$ is given or the triplet $(Y^x,Z^x,\mu)$ if $\lambda$ is given, where:
\begin{itemize}
\item $Y^x$ is a real-valued progressively measurable process;
\item $Z^x$ is an $\R^{1 \times d}$-valued progressively measurable process.
\end{itemize}

We recall that a function $h : \R^d \rightarrow \R^d$ is said to be strictly dissipative if there exists a constant $\eta > 0$ such that, $\forall x,y \in \R^d$,
\begin{align*}
(h(x)-h(y) , x-y) \leq -\eta|x-y|^2.
\end{align*}

Richou in the paper \cite{Richou_Ergodic_BSDE_PDe_Neumann} studied the case when $\G$ is bounded and with the assumptions that $f$ and $\sigma$ are Lipschitz and:
\begin{align*}
\sup_{x,y \in \G, x \neq y} \left\{ \frac{\transpose(x-y)(f(x)-f(y))}{|x-y|^2} + \frac{\tr[(\sigma(x)-\sigma(y)) \transpose(\sigma(x)-\sigma(y))]}{2|x-y|^2} \right\} < -K_{\psi, z}K_{\sigma}
\end{align*}
where $K_{\psi, z}$ is the Lipschitz constant of $\psi$ in $z$ and $K_\sigma$ is the Lipschitz constant of $\sigma$. Note that this assumption implies that $f$ is strictly dissipative. However this hypothesis on $f$ is not very natural because it supposes a dependence between parameters of the problem. Thanks to this condition it is possible to establish one of the key results: the strong estimate on the exponential decay in time of two solutions of the forward equation starting from different points. Indeed, it is used to construct, by a diagonal procedure, a solution to the EBSDE. Note that, in this work, $\G$ is assumed to be bounded. 

In the paper  \cite{Debu_Hu_Tess_weak_dissipative}, Debussche, Hu and Tessitore  were concerned with the study of EBSDE in a weakly dissipative environment. This means that the driver of the forward process is assumed to be the sum of a strictly dissipative term and a perturbation term which is Lipschitz and bounded. In their infinite dimensional framework, they supposed that the dissipative term is linear. In addition, $\sigma$ is constant, and the forward process is not reflected. Finally the coefficients of the forward process are assumed to be Gâteaux differentiable to obtain an estimate which is needed to prove the existence of a solution in this framework. In this context, the weaker assumption on $f$ makes the strong estimate on the exponential decay in time of two solutions of the forward equation impossible. However it is possible to substitute this result by a weaker result, called "basic coupling estimate" which involves the Kolmogorov semigroups of the forward process  $X^x$ and which is enough to prove the existence of a solution to the EBSDE. 

In this paper we extend the framework of \cite{Richou_Ergodic_BSDE_PDe_Neumann} to the case of an unbounded domain $\G$ for a driver weakly dissipative. Namely, we assume that $f=d+b$ where $d$ is locally Lipschitz and  dissipative with polynomial growth and $b$ is Lipschitz and bounded. The price to pay is that $\sigma$ is assumed to be Lipschitz, invertible and such that $\sigma$ and $\sigma^{-1}$ are bounded. We do not need more regularity than continuous coefficients for this study, because we treat this problem by a regularization procedure. As the basic coupling estimate of \cite{Debu_Hu_Tess_weak_dissipative} holds for a non reflected process, we start by studying the following forward process, $\forall t \geq 0$,
\begin{align*}
&V_t^x = x + \int_0^t f(V_s^x) \der s + \int_0^t \sigma (V_s^x) \der W_s,
\end{align*}
with $f $ and $\sigma$ defined as before. We show that the coupling estimate still holds in our framework with constants which depend on $d$ only through its dissipativity coefficient. Once this is established, we apply this result to establish existence and uniqueness (of $\lambda$) of solutions to the following EBSDE:
\begin{equation}\label{EBSDE général avec f intro}
Y_t^x = Y_T^x + \int_t^T[ \psi(V_s^x,Z_s^x)- \lambda] \der s - \int_t^T Z_s^x \der W_s, ~ \forall 0 \leq t \leq T < +\infty.
\end{equation}
Then we want to obtain the same result when the process $V_\cdot^x$ is replaced by a reflected process $X_\cdot^x$ in $\G$, namely:
\begin{equation}\label{EBSDE avec process reflechi non neumann intro}
Y_t^x = Y_T^x + \int_t^T[ \psi(X_s^x,Z_s^x)- \lambda] \der s - \int_t^T Z_s^x \der W_s, ~ \forall 0 \leq t \leq T < +\infty.
\end{equation}
 For this purpose, we use a penalization method to construct a sequence of processes $X^{x,n}$ defined on the whole $\R^d$ and which converges to the reflected process $X^x$.  More precisely, we denote by $(Y^{x,\alpha,n,\varepsilon},Z^{x,\alpha,n,\varepsilon})$ the solution of the following BSDE with regularized coefficients $\psi^{\varepsilon}$, $d^{\varepsilon}$, $F_n^{\varepsilon}$ and $b^{\varepsilon}$ by convolution with a sequence approximating the identity, $\forall t,T \in \R_+, ~~0 \leq t \leq T <+\infty$:
\begin{align}
Y_t^{x,\alpha,n,\varepsilon} = Y_T^{x,\alpha,n,\varepsilon} + \int_t^T [\psi^{\varepsilon}(X_s^{x,n,\varepsilon},Z_s^{x,\alpha,n,\varepsilon}) - \alpha Y_s^{x,\alpha,n,\varepsilon}] \der s - \int_t^T Z_s^{x,\alpha,n,\varepsilon} \der W_s,
\end{align}
where $X^{x,n,\varepsilon}$ is the strong solution of the SDE:
\begin{align*}
X_t^{x,n,\varepsilon} = x + \int_0^t (d^{\varepsilon} + F_n^{\varepsilon} +b^{\varepsilon})(X_s^{x,n,\varepsilon})\der s + \int_0^t \sigma^{\varepsilon}(X_s^{x,n,\varepsilon}) \der W_s.
\end{align*}
Note that as $F_n$ is dissipative with a dissipative constant equal to $0$, $d + F_n$ remains dissipative with a dissipative coefficient equal to $\eta$. Then, making  $\varepsilon \rightarrow 0$, $n \rightarrow + \infty$ and $\alpha \rightarrow 0$, it is possible to show that, roughly speaking, $(Y_t^{x,\alpha,n,\varepsilon}-Y_0^{x,\alpha,n,\varepsilon},Z_t^{x,\alpha,n,\varepsilon},Y_0^{x,\alpha,n,\varepsilon}) \rightarrow (Y_t^x,Z_t^x,\lambda)$ which is solution of EBSDE  (\ref{EBSDE avec process reflechi non neumann intro}). Once a solution $(Y,Z,\lambda)$ is found for the EBSDE (\ref{EBSDE avec process reflechi non neumann intro}) we study existence and uniqueness of solutions of the type $(Y,Z,\lambda)$ and $(Y,Z,\mu)$ of the EBSDE (\ref{EBSDE intro Neumann}). Here we only manage to find solutions which are not Markovian and which are not bounded in expectation. Then we show that the function defined by $v(x) := Y_0^x$, where $Y$ is a solution of EBSDE (\ref{EBSDE avec process reflechi non neumann intro}) is a viscosity solution of the following partial differential equation (PDE in what follows) :
\begin{align}
\left\{ \begin{array}{ll}
\LL v(x) + \psi(x, \nabla v(x)\sigma(x)) = \lambda, &x \in G, \\
\frac{\partial v}{\partial n}(x)  = 0, & x \in \partial G,
\end{array}\right.
\end{align}
where:
\begin{align*}
\LL u(x) = \frac{1}{2}\tr (\sigma(x) \transpose \sigma(x) \nabla^2u(x)) + \transpose f(x)\nabla u(x).
\end{align*}
Note that the boundary ergodic problem 
\begin{align*}
\left\{
\begin{array}{ll}
F(D^2v,Dv,x) &= \lambda \text{~in~} \G \\
L(Dv,x) & = \mu  
\end{array}
\right.
\end{align*}
were studied in  \cite{ERGODIC_PROBLEMS_AND_PERIODIC_HOMOGENIZATION_FOR_FULLY_NONLINEAR_EQUATIONS_IN_HALF_SPACE_TYPE_DOMAINS_WITH_NEUMANN_BOUNDARY_CONDITIONS} by Barles, Da Lio, Lions and Souganidis when $\G$ is a smooth, periodic, half-space-type domain and $F$ a periodic function. They found a constant $\mu$ such that there exists a bounded viscosity solution $v$ of the above problem.

At last we show that we can use the theory of EBSDE to solve an optimal ergodic control problem. $R : U \rightarrow \R^d$ is assumed to be bounded and $L$ is assumed to be Lipschitz and bounded. We define the ergodic cost:
\begin{align}
I(x,\rho) = \limsup_{T \rightarrow + \infty} \frac{1}{T}\E_T^{\rho} \left[ \int_0^T L(X_s^x,\rho_s) \der s \right],
\end{align}
where $\rho$ is an adapted process with values in a separable metric space $U$ and $\E_T^{\rho}$ is the expectation with respect to the probability measure under which $W_t^{\rho} = W_t + \int_0^t R(\rho_s) \der s$ is a Brownian motion on $[0,T]$. Defining 
\begin{align*}
\psi(x,z) = \inf_{u \in U} \{ L(x,u) + zR(u) \}, ~~~~x \in R^d, ~~z\in \R^{1 \times d},
\end{align*}
it is possible to show that, for any admissible control $\rho$, $I(x,\rho) \geq \lambda $. That is why $\lambda$ is called ergodic cost. In a similar way, $\mu$ is called boundary ergodic cost.

The paper is organized as follows. In section $2$, we study the forward SDE under the hypothesis that the drift is weakly dissipative and that the diffusion matrix is invertible and bounded. In this section we prove that the estimates we establish depend on $d$ through its dissipativity coefficient. In section $3$, we use the basic coupling estimate to study existence and uniqueness of an EBSDE with zero Neumann boundary conditions with a forward process weakly dissipative but non-reflected. In section $4$, we use a penalization method to show that the same result holds for a reflected process in a convex set not necessarily bounded. Then, we establish the link between the EBSDE with zero Neumann boundary condition and a PDE. Finally, we apply our results to an optimal ergodic control problem. Some technical proofs are given in the Appendix.

\section{The forward SDE}

\subsection{General notation}
The canonical scalar product on $\R^d$ is denoted by $(~,~)$ and the associated norm is denoted by $|\cdot|$. Given a matrix $\sigma \in \R^{d\times d}$, we define by $||| \cdot |||$ its operator norm. Let $\mathscr{\OO}$ be an open connected subset of $\Rd$. We denote by $\mathscr{C}_b^k(\OO)$ the set of real functions of class $\mathscr{C}^k$ on $\OO$ with bounded partial derivatives. We denote by $\mathscr{C}^k_{\t{lip}}$ the set of real functions whose partial derivatives of order less than or equal to $k$ are Lipschitz. We denote by $B_b(\OO)$ the set of Borel measurable bounded functions defined on $\OO$. 

$(\Omega,\F,\Pb)$ denotes a complete probability space, $(W_t)_{t \geq 0}$ denotes an $\R^d$-valued standard Brownian motion defined on this space and  $(\F_t)_{t \geq 0}$ is the natural filtration of $W$ augmented by $\Pb$-null sets. Then $(\F_t)_{t \geq 0}$ satisfies the usual condition.

$\SSS$ denotes the space of real-valued adapted continuous processes $Y$ such that for all $T > 0$, $\E[\sup_{0 \leq t \leq T} |Y_t|^2 ] < + \infty$.

$\mathscr{S}^p$ denotes the space of real-valued adapted continuous processes $Y$ such that for all $T > 0$, $\E[\sup_{0 \leq t \leq T} |Y_t|^p ] < + \infty$.

$\mathscr{M}^2(\mathbb{R}_+,\mathbb{R}^k)$ denotes the space consisting of all progressively measurable processes $X$, with value in $\mathbb{R}^k$ such that, for all $T>0$,
\[\E\left[ \int_0^T |X_s|^2\der s   \right] < +\infty. \]

 Let $f : \R^d \rightarrow \R^d$ and $\sigma : \R^d \rightarrow \R^{d \times d}$ be two locally Lipschitz functions. We denote by $(V_t^x)_{t \geq 0}$ the strong  solution of the following SDE: 
\begin{equation}\label{SDE avec f}
 V_t^x = x +\int_0^t f(V_s^x)\der s + \int_0^t \sigma(V_s^x) \der W_s.
 \end{equation}

\begin{lemm}\label{lemm estimates dissipativity}
Assume that $\exists a \in \R^d$, $\eta_1$, $\eta_2 > 0$  such that, $\forall y \in \R^d$,
\[(f(y),y-a) \leq -\eta_1 |y-a|^2 + \eta_2, \]
and that $|\sigma|$ is bounded by $\sigma_{\infty}$, then there exists a strong solution $(V_t^x)_{t \geq 0}$ to (\ref{SDE avec f}) which is pathwise unique and for which the explosion time is almost surely equal to infinity. Furthermore the following estimate holds $\forall t \geq 0$:
\[ 
\E|V_t^x|^2 \leq C(1+|x|^2e^{-2\eta_1 t}),
\]
where $C$ is a constant which does not depend on time $t$ and depends on $f$ only through $\eta_1$ and $\eta_2$, and on $\sigma$ only through $\sigma_\infty$.
Furthermore, for all $p > 2$, for all $0 < \beta < p\eta_1$,
\[
\E|V_t^x|^p \leq C(1+|x|^pe^{-\beta t}),
\]
where $C$ is a constant which does not depend on time $t$, depends on $f$ only through $\eta_1$ and $\eta_2$, and on $\sigma$ only through $\sigma_\infty$.
We also have the following inequality, for all $p\geq 1$,
\[\E \left[\sup_{0 \leq t \leq T}|V_t^{x}|^p \right] \leq C(1+|x|^p),
\]
where $C$ depends in this case, on time $T$.
\end{lemm}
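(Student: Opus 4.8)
The plan is to establish the three estimates via Itô's formula applied to suitable functions of $V_t^x$, together with Gronwall-type arguments. For existence, pathwise uniqueness and non-explosion, I would first note that local Lipschitz continuity of $f$ and $\sigma$ yields a unique local strong solution up to an explosion time $\tau$. To rule out explosion, I would apply Itô's formula to $|V_{t\wedge\tau_N}^x - a|^2$, where $\tau_N$ is the exit time from the ball of radius $N$ centered at $a$, obtaining
\begin{align*}
\der|V_t^x-a|^2 = \left[2(f(V_t^x),V_t^x-a) + \tr(\sigma(V_t^x)\transpose\sigma(V_t^x))\right]\der t + 2(V_t^x-a)\transpose\sigma(V_t^x)\der W_t.
\end{align*}
The dissipativity assumption bounds the drift term by $-2\eta_1|V_t^x-a|^2 + 2\eta_2$, and boundedness of $\sigma$ bounds the trace term by $d\sigma_\infty^2$. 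Taking expectations kills the martingale part (after localization), and the resulting linear differential inequality for $\E|V_{t\wedge\tau_N}^x-a|^2$ gives a bound uniform in $N$; letting $N\to\infty$ forces $\tau=+\infty$ almost surely and simultaneously delivers the $L^2$ estimate $\E|V_t^x-a|^2 \leq e^{-2\eta_1 t}|x-a|^2 + \frac{2\eta_2+d\sigma_\infty^2}{2\eta_1}$, from which the stated form $\E|V_t^x|^2 \leq C(1+|x|^2e^{-2\eta_1 t})$ follows by the elementary inequality $|V_t^x|^2 \leq 2|V_t^x-a|^2 + 2|a|^2$. Crucially, the constant depends on $f$ only through $\eta_1,\eta_2$ and on $\sigma$ only through $\sigma_\infty$, as required.

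For the $L^p$ estimate with $p>2$, I would apply Itô's formula to $|V_t^x-a|^p = (|V_t^x-a|^2)^{p/2}$. Writing $\phi(v)=|v-a|^p$, the generator term produces a leading contribution $p|V_t^x-a|^{p-2}(f(V_t^x),V_t^x-a) \leq -p\eta_1|V_t^x-a|^p + p\eta_2|V_t^x-a|^{p-2}$, while the second-order terms from $\sigma$ contribute something of order $|V_t^x-a|^{p-2}$ as well (since $\sigma$ is bounded). The point is that all lower-order terms carry a factor $|V_t^x-a|^{p-2}$, which by Young's inequality can be split as $\varepsilon|V_t^x-a|^p + C_\varepsilon$; choosing $\varepsilon$ small enough so that $p\eta_1 - \varepsilon \geq \beta$ for the prescribed $\beta < p\eta_1$ yields, after taking expectations and localizing as before, the differential inequality $\frac{\der}{\der t}\E|V_t^x-a|^p \leq -\beta\,\E|V_t^x-a|^p + C$. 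Gronwall then gives $\E|V_t^x-a|^p \leq e^{-\beta t}|x-a|^p + C/\beta$, and converting back to $|V_t^x|^p$ produces the claimed bound $\E|V_t^x|^p \leq C(1+|x|^pe^{-\beta t})$.

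For the supremum estimate, the exponential decay is no longer the issue, so I would work on a fixed horizon $[0,T]$ and allow the constant to depend on $T$. Starting from the integrated form of Itô's formula for $|V_t^x-a|^p$, I would take the supremum over $t\in[0,T]$ and then the expectation. The drift and second-order terms are handled by the same dissipativity and boundedness bounds combined with the already-established moment estimate $\sup_{t\leq T}\E|V_t^x-a|^p \leq C(1+|x|^p)$. The genuinely new ingredient is controlling $\E\sup_{t\leq T}\left|\int_0^t p|V_s^x-a|^{p-2}(V_s^x-a)\transpose\sigma(V_s^x)\der W_s\right|$, for which I would invoke the Burkholder--Davis--Gundy inequality; this bounds the supremum of the martingale by the expectation of the square root of its quadratic variation, which is $\E\left(\int_0^T |V_s^x-a|^{2p-2}\sigma_\infty^2\,\der s\right)^{1/2}$, again controlled after a Young-type splitting by $\frac{1}{2}\E\sup_{t\leq T}|V_t^x-a|^p$ plus a term involving $\int_0^T\E|V_s^x-a|^p\der s$. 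Absorbing the half-supremum into the left-hand side closes the estimate.

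The main obstacle I anticipate is bookkeeping rather than conceptual: namely, carrying out the Young's inequality splittings in the $L^p$ case carefully enough to guarantee that the exponential rate $\beta$ can be taken arbitrarily close to $p\eta_1$ (this is what forces the restriction $\beta < p\eta_1$ rather than $\beta \leq p\eta_1$), and verifying at each stage that the constants never absorb any dependence on $f$ or $\sigma$ beyond $\eta_1,\eta_2,\sigma_\infty$. The localization via $\tau_N$ must be maintained consistently through all three estimates so that the martingale terms legitimately vanish in expectation and Fatou's lemma can be applied when passing to the limit $N\to\infty$; for $p\geq 1$ below $2$ in the supremum estimate, I would obtain the bound for $p=2$ first and then use Jensen's or Hölder's inequality to descend to smaller $p$.
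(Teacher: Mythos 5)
Your proposal is correct and follows essentially the same route as the paper: Itô's formula applied to $|V_t^x-a|^2$ and $|V_t^x-a|^p$, the dissipativity bound on the drift together with the boundedness of $\sigma$, Young's inequality to absorb the $|V_t^x-a|^{p-2}$ terms (which is exactly what forces $\beta<p\eta_1$), and BDG plus an absorption argument for the supremum estimate, with Jensen's inequality to descend to $1\leq p<2$. The only cosmetic differences are that the paper cites a reference for existence and non-explosion rather than running the localization argument, and phrases the Gronwall step via the integrating factors $e^{2\eta_1 t}$ and $e^{(p\eta_1-\delta)t}$.
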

\begin{proof}
The proof is given in the appendix.
\end{proof}

We recall that a function is weakly dissipative if it is a sum of an $\eta$-dissipative function (namely $\forall x,x' \in \R^d, (d(x)-d(x') , x-x') \leq -\eta|x-x'|^2$),  and a bounded function. Thus we write $f = d + b$, with $d$ $\eta$-dissipative and $|b|$ bounded by $B$.

\begin{hypo}\label{hypo f weakly dissipative, sigma invertible} ~\\
\vspace{-0.5cm}
\begin{itemize}
\item $f = d + b$ is weakly dissipative, 
\item $d$ is  locally Lipschitz and have polynomial growth: there exists $ \nu > 0$ such that for all $x \in \R^d$, $|d(x)| \leq C(1+|x|^\nu)$,
\item $b$ is Lipschitz,
\item $\sigma$ is Lipschitz, invertible, and $|\sigma|$ and $|\sigma^{-1}|$ are bounded by $\sigma_{\infty}$,
\item there exists $\Lambda \geq 0$ such that for every $x,y \in \R^d$, 
\begin{align*}
|\left(y, \sigma(x+y) - \sigma(x) \right)| \leq \Lambda |y|
\end{align*}
and there exists $\lambda > 0$ such that 
\begin{align*}
2(\lambda - \lambda^2 \Lambda^2) > |||\sigma^{-1}|||^2.
\end{align*}
\end{itemize}
\end{hypo}

\begin{rema}
Note that when $\sigma(x)$ does not depend on $x$, then $\Lambda =0$ and the last assumption of Hypothesis \ref{hypo f weakly dissipative, sigma invertible} is satisfied for $\lambda$ large enough. We give an example of $\sigma$ depending on $x$ and satisfying the last assumption. In the one dimensional case, take $\sigma(x) = 10\mathds{1}_{\{x \leq 0\}} + (10+\frac{1}{10}x)\mathds{1}_{\{0< x < 1\}} + (101/10)\mathds{1}_{\{x \geq 1\}}$. Then $\Lambda  = 1/10$ and $|||\sigma^{-1}||| \leq 1/10$. Clearly, the assumption is satisfied for $\lambda = 1$.
\end{rema}

\begin{rema}\label{rema dissipativity implies previous conditions}
It is clear that if $f$ satisfies Hypothesis \ref{hypo f weakly dissipative, sigma invertible} then $f$ satisfies the assumption of Lemma \ref{lemm estimates dissipativity}. Indeed, let us suppose that $f$ satisfies Hypothesis \ref{hypo f weakly dissipative, sigma invertible}. Let $a \in \R^d$, then $\forall y \in \R^d$,
\begin{align*}
& (f(y)-f(a) , y-a) = (d(y)-d(a),y-a) + (b(y)-b(a),y-a)\\
\Rightarrow & (f(y),y-a) \leq -\eta|y-a|^2 + 2B|y-a| + |f(a)||y-a| \\
\Rightarrow & (f(y),y-a) \leq -\eta|y-a|^2 + \frac{(2B+|f(a)|)^2}{2\varepsilon} + \frac{\varepsilon|y-a|^2}{2},
\end{align*}
which gives us the desired result, for $\varepsilon$ small enough.
\end{rema}

\begin{lemm}\label{lemm basic coupling extended}
Assume that Hypothesis (\ref{hypo f weakly dissipative, sigma invertible}) holds true but this time with $b$ replaced by $b_2$ which is only bounded measurable and not Lipschitz anymore. Then the solution of (\ref{SDE avec f}) with $b$ replaced by $b_2$ still exists but in the weak sense, namely there exists a new Brownian motion $(\hat{W}_t)_{t \geq 0}$ with respect to a new probability measure $\hat{\Pb}$ under which equation (\ref{SDE avec f}) is satisfied by $(V_t^x)_{t \geq 0}$ with $(W_t)_{t \geq 0}$ replaced by $(\hat{W}_t)_{t \geq 0}$. Such a process is unique in law and the estimates of Lemma (\ref{lemm estimates dissipativity}) are still satisfied under the new probability $\hat{\Pb}$.
\end{lemm}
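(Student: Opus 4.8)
The plan is to build the weak solution from the strong solution of the reference SDE driven by the dissipative part $d$ alone, and then to insert the bounded drift $b_2$ by a Girsanov change of measure. First I would apply Lemma~\ref{lemm estimates dissipativity} to $f=d$: since $d$ is $\eta$-dissipative, locally Lipschitz and of polynomial growth, taking $a=0$ gives $(d(y),y)\le(d(y)-d(0),y)+|d(0)||y|\le-\tfrac{\eta}{2}|y|^2+\tfrac{|d(0)|^2}{2\eta}$, so the one-sided bound of Lemma~\ref{lemm estimates dissipativity} holds with $\eta_1=\eta/2$; together with $|\sigma|\le\sigma_\infty$ this yields a pathwise unique, non-exploding strong solution $(V_t^x)_{t\ge0}$ on $(\Omega,\F,\Pb)$ of
\[
V_t^x=x+\int_0^t d(V_s^x)\,\der s+\int_0^t\sigma(V_s^x)\,\der W_s,
\]
which already satisfies all the announced estimates under $\Pb$.

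Next, because $|\sigma^{-1}|$ and $|b_2|$ are bounded (say by $\sigma_\infty$ and $B_2$), the process $\theta_s:=\sigma^{-1}(V_s^x)b_2(V_s^x)$ is bounded, so Novikov's condition holds on every $[0,T]$ and the Dol\'eans exponential $\mathcal{E}_t=\exp\!\left(\int_0^t\theta_s\,\der W_s-\tfrac12\int_0^t|\theta_s|^2\,\der s\right)$ is a true martingale there. Setting $\frac{\der\hat{\Pb}}{\der\Pb}\Big|_{\F_T}=\mathcal{E}_T$ and $\hat W_t:=W_t-\int_0^t\theta_s\,\der s$, Girsanov's theorem makes $(\hat W_t)_{t\le T}$ a Brownian motion under $\hat{\Pb}$; substituting the definition of $\hat W$ back into the reference SDE shows that under $\hat{\Pb}$ the very same $V^x$ solves $V_t^x=x+\int_0^t(d+b_2)(V_s^x)\,\der s+\int_0^t\sigma(V_s^x)\,\der\hat W_s$, i.e. (\ref{SDE avec f}) with $b$ replaced by $b_2$ in the weak sense. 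The resulting family of measures is consistent in $T$, so the construction is coherent across horizons.

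For the estimates I would observe that the moment computation underlying Lemma~\ref{lemm estimates dissipativity} relies only on It\^o's formula, the one-sided bound $((d+b_2)(y),y-a)\le-\eta_1|y-a|^2+\eta_2$, and $|\sigma|\le\sigma_\infty$, and never on local Lipschitz regularity, which served solely to secure strong existence. Since $b_2$ is bounded, the estimate of Remark~\ref{rema dissipativity implies previous conditions}—whose proof uses only the dissipativity of $d$ and the boundedness of the perturbation, not its Lipschitz character—furnishes precisely the dissipativity bound for $d+b_2$. Hence the It\^o argument transfers verbatim to the weak solution $V^x$ under $\hat{\Pb}$, yielding the estimates of Lemma~\ref{lemm estimates dissipativity} with constants depending on $d$ only through $\eta$.

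Finally, for uniqueness in law I would run the change of measure in reverse. Given any weak solution $(\tilde V,\tilde W,\tilde\Pb)$ of the $b_2$-equation, the density $\exp\!\left(-\int_0^T\theta_s\,\der\tilde W_s-\tfrac12\int_0^T|\theta_s|^2\,\der s\right)$, with $\theta_s=\sigma^{-1}(\tilde V_s)b_2(\tilde V_s)$, turns $\tilde V$ into a weak solution of the reference $d$-equation, whose law is uniquely determined by pathwise uniqueness via Yamada--Watanabe. Rewriting the stochastic integral through $\sigma(\tilde V_s)\,\der\tilde W_s=\der\tilde V_s-(d+b_2)(\tilde V_s)\,\der s$ exhibits this density as a functional of the path of $\tilde V$ alone, so the law of $\tilde V$ under $\tilde\Pb$ is determined by its (unique) law under the transformed measure, giving uniqueness in law. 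I expect this last step to be the main obstacle: one must verify that the change-of-measure density is a genuine functional of the solution path, rather than of the driving noise, so that uniqueness actually transfers—and it is exactly the boundedness of $\theta$ together with the invertibility of $\sigma$ that makes the argument close.
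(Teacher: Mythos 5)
Your proof is correct and follows essentially the same route as the paper: a Girsanov change of measure with a bounded drift correction of the form $\sigma^{-1}(V_t^x)\times(\text{bounded difference of drifts})$, the only cosmetic difference being that the paper starts from the strong solution with drift $d+b$ and shifts by $\sigma^{-1}(V_t^x)(b(V_t^x)-b_2(V_t^x))$, whereas you start from the $d$-only equation and shift by $\sigma^{-1}(V_t^x)b_2(V_t^x)$. Your write-up additionally spells out the transfer of the moment estimates and the uniqueness in law (via the reverse change of measure, the path-functional form of the density, and Yamada--Watanabe), both of which the paper merely asserts.
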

\begin{proof}
It is enough to write:
\begin{align*}
\der V_t^x &= [d(V_t^x) + b(V_t^x)] \der t + \sigma(V_t^x) \der W_t \\
& = [d(V_t^x) +b_2(V_t^x)] \der t  + \sigma(V_t^x)[ \sigma^{-1}(V_t^x)(b(V_t^x) - b_2(V_t^x)) + \der W_t] \\
&= [d(V_t^x) +b_2(V_t^x)] \der t  + \sigma(V_t^x)\der \hat{W}_t,
\end{align*}
where $\der \hat{W}_t = \sigma^{-1}(V_t^x)(b(V_t^x) - b(V_t^x)) + \der W_t$ is the new Brownian motion thanks to the Girsanov theorem (note that $\sigma^{-1}$, $b$ and $b_2$ are measurable and bounded by hypothesis). 
\end{proof}

\begin{lemm}\label{lemm estimates on semi groupe kolmogorov}
Assume that Hypothesis \ref{hypo f weakly dissipative, sigma invertible} holds true. Then there exist $C>0$ and $\mu > 0$ such that $\forall \Phi \in B_b(\R^d)$,
\begin{equation}\label{estimates on kolmogorov semigroup Pt}
|\mathscr{P}_t\left[\Phi\right](x) - \mathscr{P}_t\left[\Phi\right](x') | \leq C(1+|x|^2+|x'|^2)e^{-\mu t}|\Phi|_0
 \end{equation}
where $\mathscr{P}_t[\Phi](x) = \E\Phi(V_t^{x})$ is the Kolmogorov semigroup associated to (\ref{SDE avec f}). We stress the fact that the constants $C$ and $\mu$ depend on $f$ only through $\eta$ and $B$. 
\end{lemm}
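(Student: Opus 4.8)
The plan is to prove the estimate by a coupling argument, turning the bound on $\mathscr{P}_t$ into a bound on the probability that two suitably coupled copies of $V^x$ and $V^{x'}$ have not yet met. Since $\Phi$ is only bounded and measurable, I would first reduce the inequality to an estimate on a meeting time. If $(U_t,U'_t)_{t\ge0}$ is any coupling, carried by a single probability space, of the laws of $(V_t^x)$ and $(V_t^{x'})$ which coalesces at a random time $\tau$ (that is, $U_t=U'_t$ for all $t\ge\tau$), then
\[
|\mathscr{P}_t[\Phi](x)-\mathscr{P}_t[\Phi](x')|=|\E[\Phi(U_t)-\Phi(U'_t)]|=|\E[(\Phi(U_t)-\Phi(U'_t))\mathds{1}_{\tau>t}]|\le 2|\Phi|_0\,\Pb(\tau>t).
\]
Hence it suffices to construct such a coalescent coupling and to prove $\Pb(\tau>t)\le C(1+|x|^2+|x'|^2)e^{-\mu t}$ with $C$ and $\mu$ depending on $f$ only through $\eta$ and $B$.

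To control how fast the two copies approach each other I would analyze the dynamics of their difference. Coupling them through the same Brownian motion and writing $\zeta_t=U_t-U'_t$, It\^o's formula gives
\[
\der|\zeta_t|^2=2(\zeta_t,f(U_t)-f(U'_t))\,\der t+\tr\!\big[(\sigma(U_t)-\sigma(U'_t))\transpose(\sigma(U_t)-\sigma(U'_t))\big]\der t+\der M_t,
\]
where $M$ is a local martingale. Writing $f=d+b$, the $\eta$-dissipativity of $d$ together with $|b|\le B$ bounds the first drift term by $-2\eta|\zeta_t|^2+4B|\zeta_t|$, while the last item of Hypothesis \ref{hypo f weakly dissipative, sigma invertible} is precisely what is needed to keep the diffusion-difference contribution from destroying this contraction. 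Combined with the second moment bound of Lemma \ref{lemm estimates dissipativity}, whose constants depend on $f$ only through $\eta$ and $B$ by Remark \ref{rema dissipativity implies previous conditions}, this produces an exponentially decaying control of the distance carrying the prefactor $1+|x|^2+|x'|^2$.

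Synchronous coupling alone never makes the two copies exactly equal, and this is essential here because $\Phi$ is only bounded. To force coalescence I would exploit the invertibility of $\sigma$: passing to the radial process $r_t=|\zeta_t|$, only the projection $\transpose(\sigma(U_t)-\sigma(U'_t))\zeta_t$ of the diffusion mismatch enters its dynamics, and this is bounded by $\Lambda r_t$ by Hypothesis \ref{hypo f weakly dissipative, sigma invertible}; since $\sigma^{-1}$ is bounded by $\sigma_\infty$ the radial noise is non-degenerate, and adding an attraction drift (realized, as in Lemma \ref{lemm basic coupling extended}, through a Girsanov change of measure which keeps the marginal laws and the moment estimates intact) turns $r_t$ into a one-dimensional process with strictly negative drift and non-degenerate noise near the origin. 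The balance condition $2(\lambda-\lambda^2\Lambda^2)>|||\sigma^{-1}|||^2$ is exactly what guarantees the regime under which $r_t$ reaches $0$ in finite time, i.e. the copies meet. A comparison with this one-dimensional process, together with the exponential distance decay of the previous step, then yields $\Pb(\tau>t)\le C(1+|x|^2+|x'|^2)e^{-\mu t}$, with all constants inherited through $\eta$ and $B$ only.

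The main obstacle is the state dependence of $\sigma$: in the dynamics of the distance the term $\sigma(U_t)-\sigma(U'_t)$ does not cancel as it would for a constant diffusion, and controlling it is precisely the purpose of the technical last item of Hypothesis \ref{hypo f weakly dissipative, sigma invertible} and of the balance relating $\lambda$, $\Lambda$ and $|||\sigma^{-1}|||$. The second delicate point, again caused by $\Phi$ being merely bounded and measurable, is that an $L^2$-contraction of $\zeta_t$ by itself would only give the estimate for Lipschitz $\Phi$; upgrading it to a genuine coalescent coupling is what forces the simultaneous use of the non-degeneracy of $\sigma$ and of the change-of-measure construction of Lemma \ref{lemm basic coupling extended}.
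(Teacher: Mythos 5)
Your reduction of the estimate to a tail bound on the meeting time of a coalescent coupling is exactly the paper's strategy, and you correctly identify that the last item of Hypothesis \ref{hypo f weakly dissipative, sigma invertible} is the structural condition that makes the coupling work. But the mechanism you propose for forcing coalescence does not work. Under a synchronous coupling the martingale part of $\der|\zeta_t|$ is $\frac{1}{|\zeta_t|}\transpose\zeta_t\bigl(\sigma(U_t)-\sigma(U'_t)\bigr)\der W_t$, whose coefficient is bounded \emph{above} by $\Lambda$ and has no lower bound; the boundedness of $\sigma^{-1}$ gives non-degeneracy of the noise driving each process separately, not of the noise driving their difference. In the constant-$\sigma$ case --- which the paper explicitly covers, with $\Lambda=0$ --- the radial noise is identically zero, the difference evolves by a deterministic ODE, and your one-dimensional hitting-time argument cannot produce coalescence in finite time from mere dissipativity. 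So the step ``$r_t$ reaches $0$ in finite time'' fails precisely in the simplest admissible setting.

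The second gap is the assertion that the attracting drift, added via Girsanov, ``keeps the marginal laws intact.'' It does not: in the paper's Step 2 the auxiliary process $\widetilde V_t=V_t^y+Y_t^{x,y}$, driven by the time-singular drift $-LY_t^{x,y}/(T-t)$ that forces $Y_T^{x,y}=0$ deterministically, has law $\mu^x$ only under the \emph{new} measure $\Q$, while under $\Pb$ it has a different law $\widetilde\mu$. The real work is to bound $\int(\der\mu^x/\der\widetilde\mu)^{2+\delta}\der\widetilde\mu$ --- this is where $2(\lambda-\lambda^2\Lambda^2)>|||\sigma^{-1}|||^2$ enters, as a Novikov-type condition making the exponential of $\langle I,I\rangle_T$ integrable --- and then to invoke the maximal-coupling propositions to convert that density bound into a strictly positive probability of meeting by time $T$ for starting points in a fixed ball. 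You would also need the paper's Step 1 (exponential moments of the return time to a ball $B_R$, which is where the prefactor $1+|x|^2+|x'|^2$ really originates) and the Step 3 iteration over successive return times to turn the one-shot meeting probability into the exponential rate $e^{-\mu t}$. As written, your proposal replaces this entire architecture with a hitting argument that is not available here.
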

\begin{proof}
The proof is given in the appendix.
\end{proof}
\begin{rema}
The importance of the dependency of $C$ and $\mu$ only through some parameters of the problem will appear in Remark \ref{remarque ajout F_n}. 
\end{rema}
\begin{corr}
The estimate (\ref{estimates on kolmogorov semigroup Pt}) can be extended to the case in which $b$ is only bounded measurable and there exists a uniformly bounded sequence of Lipschitz functions $\{b_m\}_{m \geq 1}$ (i.e. $b_m$ is Lipschitz and $\sup_m \sup_x |b_m(x)| < +\infty$) such that 
\[\forall x \in \R^d, ~~ \lim_m b_m(x) = b(x). \]
In this case, we define a semigroup relatively to the new probability measure, namely: 
\begin{align*}
\Pt[\Phi](x) := \hat{\E}\Phi(X_t^x).
\end{align*}
\end{corr}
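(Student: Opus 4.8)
The plan is to argue by approximation, exploiting the feature emphasised in Lemma~\ref{lemm estimates on semi groupe kolmogorov} that the constants $C$ and $\mu$ depend on $f=d+b$ only through the dissipativity coefficient $\eta$ of $d$ and the bound $B$ on $|b|$. Set $B := \sup_m \sup_x |b_m(x)| < +\infty$. For each fixed $m$ the drift $d+b_m$ is the sum of the $\eta$-dissipative, locally Lipschitz, polynomially growing function $d$ and the Lipschitz function $b_m$ with $|b_m|\leq B$; since $\sigma$ is unchanged, Hypothesis~\ref{hypo f weakly dissipative, sigma invertible} holds for $d+b_m$ with the same $\eta$, the same $\sigma$-data (hence the same $\Lambda,\lambda$), and the uniform bound $B$. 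Denoting by $\mathscr{P}_t^m[\Phi](x)=\E\Phi(V_t^{x,m})$ the Kolmogorov semigroup attached to the SDE with drift $d+b_m$, Lemma~\ref{lemm estimates on semi groupe kolmogorov} gives
\[
|\mathscr{P}_t^m[\Phi](x)-\mathscr{P}_t^m[\Phi](x')|\leq C(1+|x|^2+|x'|^2)e^{-\mu t}|\Phi|_0,\qquad \forall m\geq 1,
\]
with one and the same pair $(C,\mu)$ for all $m$. It then suffices to prove that $\mathscr{P}_t^m[\Phi](x)\to\Pt[\Phi](x)$ as $m\to\infty$ for every fixed $t,x,\Phi$, and to let $m\to+\infty$ in the displayed inequality (applied at both $x$ and $x'$).

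To obtain this convergence I would place all the processes on a common space by a Girsanov argument. Let $\bar V^x$ be the strong solution, under $\Pb$ and driven by $W$, of the SDE with drift $d$ alone, which exists and does not explode by Lemma~\ref{lemm estimates dissipativity} (applied with $b=0$). For each $m$, since $\sigma^{-1}$ and $b_m$ are bounded, Novikov's condition is trivially satisfied and the stochastic exponential
\[
D_t^m := \exp\left(\int_0^t (\sigma^{-1}(\bar V_s^x)b_m(\bar V_s^x),\der W_s)-\tfrac12\int_0^t |\sigma^{-1}(\bar V_s^x)b_m(\bar V_s^x)|^2\,\der s\right)
\]
defines a probability $\der\hat\Pb^m = D_t^m\,\der\Pb$ on $\F_t$ under which $\bar V^x$ solves the SDE with drift $d+b_m$. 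As $d+b_m$ is locally Lipschitz and $\sigma$ is Lipschitz, pathwise uniqueness and hence (Yamada--Watanabe) uniqueness in law hold, so the law of $\bar V^x$ under $\hat\Pb^m$ coincides with that of $V^{x,m}$ under $\Pb$, whence $\mathscr{P}_t^m[\Phi](x)=\E[\Phi(\bar V_t^x)D_t^m]$. Replacing $b_m$ by $b$ in the exponent defines a density $D_t$ and a measure $\hat\Pb$ under which $\bar V^x$ is a weak solution with drift $d+b$; by the uniqueness in law of Lemma~\ref{lemm basic coupling extended} this is exactly the law of $X^x$, so $\Pt[\Phi](x)=\E[\Phi(\bar V_t^x)D_t]$.

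It remains to show $\E[\Phi(\bar V_t^x)D_t^m]\to\E[\Phi(\bar V_t^x)D_t]$, and since $\Phi$ is bounded this reduces to $D_t^m\to D_t$ in $L^1(\Pb)$. The integrands $\sigma^{-1}(\bar V_s^x)b_m(\bar V_s^x)$ converge for a.e.\ $(\omega,s)$ to $\sigma^{-1}(\bar V_s^x)b(\bar V_s^x)$ and are uniformly bounded by $\sigma_\infty B$; hence by dominated convergence the Lebesgue integral in the exponent converges almost surely and the It\^o integral converges in $L^2(\Pb)$ (It\^o isometry), so $D_t^m\to D_t$ in probability. Uniform integrability follows from a uniform $L^p$ bound for some $p>1$: writing $(D_t^m)^p$ as the product of the exponential martingale with integrand $p\,\sigma^{-1}(\bar V^x)b_m(\bar V^x)$ and the factor $\exp\!\big(\tfrac{p^2-p}{2}\int_0^t|\sigma^{-1}(\bar V_s^x)b_m(\bar V_s^x)|^2\,\der s\big)\leq \exp\!\big(\tfrac{p^2-p}{2}\sigma_\infty^2 B^2 t\big)$ shows $\sup_m\E[(D_t^m)^p]<+\infty$. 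Convergence in probability together with uniform integrability yields $L^1$ convergence, which completes the argument. The main obstacle is precisely this passage to the limit under the change of measure; the uniformity of $(C,\mu)$ and the two Girsanov representations are bookkeeping consequences of Lemmas~\ref{lemm estimates on semi groupe kolmogorov} and~\ref{lemm basic coupling extended}.
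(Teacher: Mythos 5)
Your proof is correct and follows essentially the same route as the paper: the paper reduces to showing $\mathscr{P}_t^m[\Phi](x)\to\Pt[\Phi](x)$ and then defers to the argument of Debussche--Hu--Tessitore with the auxiliary process $U^x$ taken to be the strong solution driven by $d$ alone, which is exactly your $\bar V^x$, the remaining steps being the Girsanov representation of both semigroups against densities $D_t^m$, $D_t$ and the passage to the limit. You have simply written out in full the convergence-in-probability plus uniform-integrability argument that the paper leaves implicit.
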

\begin{proof}
We denote by $\Pt^{m}$ the Kolmogorov semigroup of (\ref{SDE avec f}) with $b$ replaced by $b_m$, for more clarity we rewrite this equation below: $\forall x \in \overline{G}$,
\[
V_t^{x,m} = x + \int_0^t (d +b_m)(V_s^{x,m})\der s + \int_0^t \sigma(V_s^{x,m}) \der W_s.
\]
It is sufficient to prove that, $\forall x \in \overline{G}, ~\forall t \geq 0$,
\[\Pt^{m}[\Phi](x) \rightarrow  \Pt[\Phi](x).\]
To do that, it is easy to adapt the proof from \cite{Debu_Hu_Tess_weak_dissipative} replacing the process $U_t^x$ by its analogue in our context. Thus we define $U_t^x$ as the strong solution of the following SDE:
\[U_t^x = x + \int_0^t d(U_s^x)\der s + \int_0^t \sigma(U_s^x) \der W_s , \]
and the rest remains the same.
\end{proof}


\section{The ergodic BSDE}
In this section we study the following EBSDE in infinite horizon:
\begin{equation}\label{EBSDE général avec f}
Y_t^x = Y_T^x + \int_t^T[ \psi(V_s^x,Z_s^x)- \lambda] \der s - \int_t^T Z_s^x \der W_s, ~ \forall 0 \leq t \leq T < +\infty.
\end{equation}
At the moment, the forward process, defined as the strong solution of (\ref{SDE avec f}) is not reflected. However the existence result we are going to show in the next theorem is interesting for its own, because it gives some ideas which will be reused in the next section. 

 We need the following hypothesis on $\psi : \R^d \times \R^d \rightarrow \R$ : 
\begin{hypo}\label{hypo psi lipschitz en x,z et borné en x}
There exists $M_{\psi} \in \R$ such that: $\forall x \in \R^d$, $\forall z$, $z' \in \R^{1\times d}$, 
\begin{itemize}
\item $\psi : \R^d \times \R^d \rightarrow \R \text{ is measurable},$
\item $\psi(\cdot,z) \text{ is continuous }, $
\item $|\psi(x,0)| \leq M_{\psi},$
\item $|\psi(x,z)-\psi(x,z')| \leq M_\psi |z-z'|. $
\end{itemize}
\end{hypo}

\begin{hypo}\label{hypo f et psi C^1 with partial derivative bounded}~\\
\vspace{-0.5cm}
\begin{itemize} 
\item $f$ is $\mathscr{C}^1$ and all of its derivatives have polynomial growth of first order, i.e. for each $x \in \R^d$ and each multi-index $L$ with $|L| \leq m$, $m \in \left\{0,1\right\}$, there exist positive constants $\gamma_m$ and $q_m$ such that 
\begin{align*}
\left|\partial_L d(x)\right|^2 \leq \gamma_m(1+|x|^{q_m}).
\end{align*}
Also, set $\xi := \max_{m \in \{ 0,1 \}} q_m < + \infty$.
\item $b$, $\sigma$ and $\psi ~ \in ~ \mathscr{C}_b^1$.
\end{itemize}
\end{hypo}

Using a standard approach (see  \cite{FUHRMAN_HU_TESSITORE_ERGODIC_BSDE}), we are going to study the following BSDE in infinite horizon
\begin{equation}\label{BSDE alpha approchant l'EBSDE}
Y_t^{x,\alpha} = Y_T^{x,\alpha} + \int_t^T[ \psi(V_s^{x},Z_s^{x,\alpha})- \alpha Y_s^{x,\alpha}] \der s - \int_t^T Z_s^{x,\alpha} \der W_s, ~ \forall 0 \leq t \leq T < +\infty.
\end{equation}

%

Such an equation was studied in \cite{BRIAND_HU_STABILITY_BSDE_HOMOGENIZATION} from which we have the following result:
\begin{lemm}\label{lemm existence unicité BSDE monotone alpha}
Assume that hypotheses (\ref{hypo f weakly dissipative, sigma invertible}) and (\ref{hypo psi lipschitz en x,z et borné en x}) hold true. Then there exists a unique solution $(Y^{x,\alpha},Z^{x,\alpha})$ to BSDE (\ref{BSDE alpha approchant l'EBSDE}) such that $Y^{x,\alpha}$ is a bounded adapted continuous process and $Z^{x,\alpha} \in \M$. Furthermore, $|Y_t^{x,\alpha}| \leq \frac{M_\psi}{\alpha}$. Finally there exists a function $v^\alpha$ such that $Y_t^{x,\alpha} = v^{\alpha}(X_t^x)$ $\Pb$-a.s.  and there exists a measurable function $\zeta^{\alpha} : \R^d \rightarrow \R^{1 \times d}$ such that $Z_t^{x,\alpha} = \zeta^{\alpha}(X_t^{x})$ $\Pb$-a.s. 
\end{lemm}

We will need the following lemma :

\begin{lemm}\label{lemm zeta approchee}
Let $\zeta$, $\zeta'$ be two continuous functions: $\R^d \rightarrow \R^{1 \times d}$. We define 
\[\Upsilon (x) = \left\{ \begin{array}{l}
\frac{\psi(x,\zeta(x))-\psi(x,\zeta'(x))}{|\zeta(x)-\zeta'(x)|^2}\transpose(\zeta(x)-\zeta'(x)) \t{, if~} \zeta(x) = \zeta'(x),\\
0 \t{, if~} \zeta(x) = \zeta'(x).
\end{array}\right.
\]
There exists a uniformly bounded sequence of Lipschitz functions $(\Upsilon_n)_{n \geq 0}$ (i.e., $\forall n$, $\Upsilon_n$ is Lipschitz and $\sup_n \sup_x  |\Upsilon_n(x)| < +\infty$) such that $\Upsilon_n$ converges pointwisely to $\Upsilon$.
\end{lemm}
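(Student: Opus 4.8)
The plan is to construct the approximating sequence in two successive regularizations: the first removes the discontinuity of $\Upsilon$ caused by the set $\{\zeta=\zeta'\}$ and produces a continuous (but not yet Lipschitz) approximation, while the second smooths by convolution to gain the Lipschitz property. (Note that in the definition of $\Upsilon$ the first case should read ``if $\zeta(x)\neq\zeta'(x)$''.)

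First I would record the two elementary facts behind the uniform bound and the required continuity. Since $\psi$ is Lipschitz in its second variable with constant $M_\psi$ (Hypothesis \ref{hypo psi lipschitz en x,z et born� en x}), for every $x$ with $\zeta(x)\neq\zeta'(x)$ one has $|\psi(x,\zeta(x))-\psi(x,\zeta'(x))|\le M_\psi|\zeta(x)-\zeta'(x)|$, whence $|\Upsilon(x)|\le M_\psi$; as $\Upsilon\equiv 0$ on $\{\zeta=\zeta'\}$, this gives $\sup_x|\Upsilon(x)|\le M_\psi$. Moreover, combining the continuity of $\psi(\cdot,z)$, the Lipschitz bound in $z$ and the continuity of $\zeta$, the map $x\mapsto\psi(x,\zeta(x))$ is continuous, because
\[
|\psi(x,\zeta(x))-\psi(x_0,\zeta(x_0))|\le M_\psi|\zeta(x)-\zeta(x_0)|+|\psi(x,\zeta(x_0))-\psi(x_0,\zeta(x_0))|\xrightarrow[x\to x_0]{}0 ;
\]
the same holds with $\zeta'$ in place of $\zeta$.

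Next, for $\delta>0$ I would set
\[
\Upsilon^\delta(x)=\frac{\psi(x,\zeta(x))-\psi(x,\zeta'(x))}{|\zeta(x)-\zeta'(x)|^2+\delta}\,\transpose(\zeta(x)-\zeta'(x)).
\]
The denominator is continuous and bounded below by $\delta$, and the numerator factors are continuous by the previous step, so $\Upsilon^\delta$ is continuous on all of $\Rd$. The same estimate as above gives $|\Upsilon^\delta(x)|\le M_\psi|\zeta(x)-\zeta'(x)|^2/(|\zeta(x)-\zeta'(x)|^2+\delta)\le M_\psi$, so the family is uniformly bounded by $M_\psi$. Finally $\Upsilon^\delta\to\Upsilon$ pointwise everywhere as $\delta\to 0$: at a point with $\zeta(x)\neq\zeta'(x)$ the denominator tends to $|\zeta(x)-\zeta'(x)|^2>0$, while at a point with $\zeta(x)=\zeta'(x)$ both $\Upsilon^\delta(x)$ and $\Upsilon(x)$ vanish.

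It then remains to approximate each continuous, uniformly bounded $\Upsilon^\delta$ by Lipschitz functions. I would convolve with a standard mollifier $\rho_\varepsilon$ and set $\Upsilon^\delta_\varepsilon=\Upsilon^\delta*\rho_\varepsilon$. Each $\Upsilon^\delta_\varepsilon$ is smooth with $\sup_x|\nabla\Upsilon^\delta_\varepsilon(x)|\le M_\psi\int_{\Rd}|\nabla\rho_\varepsilon(y)|\,\der y<+\infty$, hence globally Lipschitz, and $\sup_x|\Upsilon^\delta_\varepsilon(x)|\le\sup_x|\Upsilon^\delta(x)|\le M_\psi$; moreover, $\Upsilon^\delta$ being continuous, $\Upsilon^\delta_\varepsilon\to\Upsilon^\delta$ uniformly on every compact set as $\varepsilon\to0$. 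To extract a single sequence converging everywhere I would run a diagonal argument: taking $\delta_n=1/n$, choose $\varepsilon_n$ small enough that $\sup_{|x|\le n}|\Upsilon^{\delta_n}_{\varepsilon_n}(x)-\Upsilon^{\delta_n}(x)|\le 1/n$, and put $\Upsilon_n=\Upsilon^{\delta_n}_{\varepsilon_n}$. Then $(\Upsilon_n)_{n}$ is a sequence of Lipschitz functions with $\sup_n\sup_x|\Upsilon_n(x)|\le M_\psi$, and for any fixed $x$ and all $n\ge|x|$ we have $|\Upsilon_n(x)-\Upsilon(x)|\le 1/n+|\Upsilon^{1/n}(x)-\Upsilon(x)|\to0$, which is the claimed pointwise convergence.

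The delicate point is precisely that the lemma requires convergence at every point and not merely almost everywhere: a direct mollification of the (merely measurable) $\Upsilon$ would only yield convergence at Lebesgue points. The two-step scheme circumvents this, since regularizing the denominator turns $\Upsilon$ into a genuinely continuous $\Upsilon^\delta$, for which convolution converges uniformly on compact sets, and this compact-set control is exactly what makes the diagonal extraction preserve everywhere convergence. Verifying continuity of $\Upsilon^\delta$ and the interchange of the two limits along the diagonal is the only step requiring care; all the bounds follow immediately from the Lipschitz constant $M_\psi$.
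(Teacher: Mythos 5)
Your proof is correct, but it takes a genuinely different --- and more careful --- route than the paper's. The paper's proof is a single step: it mollifies $\Upsilon$ directly, setting $\Upsilon_n := \rho_n * \Upsilon$, which immediately yields the uniform bound and the Lipschitz property, and then asserts pointwise convergence. That last assertion is only guaranteed at Lebesgue points of $\Upsilon$ (hence almost everywhere, and in particular at every point of continuity), not at every point: $\Upsilon$ is in general discontinuous on the boundary of the set $\{\zeta=\zeta'\}$. For instance, in dimension one with $\psi(x,z)=z$, $\zeta(x)=x$, $\zeta'(x)=0$, one has $\Upsilon\equiv 1$ off the origin and $\Upsilon(0)=0$, so $\rho_n*\Upsilon(0)\to 1\neq\Upsilon(0)$. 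Your two-step scheme --- first replacing the denominator $|\zeta-\zeta'|^2$ by $|\zeta-\zeta'|^2+\delta$ to produce the continuous, uniformly bounded family $\Upsilon^\delta$ converging pointwise everywhere to $\Upsilon$, then mollifying each $\Upsilon^\delta$ and extracting a diagonal sequence via uniform convergence on compact sets --- repairs exactly this defect and delivers convergence at \emph{every} point, which is what the statement claims and what its intended use (Girsanov-type linearizations evaluated along the trajectories of the forward process) requires. The price is the extra layer of approximation and the diagonal extraction; what it buys is that the lemma holds as stated rather than only almost everywhere. You are also right that the first branch of the displayed definition of $\Upsilon$ should read $\zeta(x)\neq\zeta'(x)$; that is a typo in the paper.
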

\begin{proof}
For all $n \in \N$, we fix infinitely differentiable functions $\rho_n : \R^d \rightarrow \R_+$ bounded together with their derivatives of all order, such that $\int_{\R^d} \rho_n(x) \der x =1$ and 
\begin{align*}
\text{supp}(\rho_n) \subset \left\{ x \in \R^d, |x| \leq \frac{1}{n} \right\},
\end{align*}
where supp denotes the support. Then the required functions $\Upsilon$ can be defined as $\Upsilon_n(x) := \int_{\R^d} \rho_n(x-y)\Upsilon(y) \der y$. 
\end{proof}

The following lemma gives us the desired estimates on $v^{\alpha}(x)$ which will allow us to apply a diagonal procedure.
\begin{lemm}\label{Lemm estimates on v^alpha}
Assume that the hypotheses (\ref{hypo f weakly dissipative, sigma invertible}), (\ref{hypo psi lipschitz en x,z et borné en x}) and (\ref{hypo f et psi C^1 with partial derivative bounded}) hold true. Then, there exists a constant $C>0$ independent of $\alpha$ and which depends on $f$ only through $\eta$ and $B$, on $\sigma$ only through $\sigma_{\infty}$ and on $\psi$ only through $M_{\psi}$ such that, $\forall x,y \in \R^d$,
\begin{align*}
&|v^{\alpha}(x)-v^{\alpha}(y)| \leq C(1+|x|^2+|y|^2),\\
&|v^{\alpha}(x)-v^{\alpha}(y)| \leq C(1+|x|^{2}+|y|^{2})|x-y|. 
\end{align*}
\end{lemm}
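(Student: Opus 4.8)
The plan is to obtain the two estimates on $v^\alpha$ by exploiting the BSDE representation $Y_t^{x,\alpha} = v^\alpha(V_t^x)$ together with the coupling estimate of Lemma~\ref{lemm estimates on semi groupe kolmogorov}. The first, quadratic estimate should follow rather directly from the Kolmogorov-semigroup bound; the second, Lipschitz-type estimate is the delicate one and will require an estimate on $Z^{x,\alpha}$ and a change of probability absorbing the $z$-dependence of $\psi$.

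For the first inequality, I would write $v^\alpha(x) - v^\alpha(y) = Y_0^{x,\alpha} - Y_0^{y,\alpha}$ and take expectations in the BSDE \eqref{BSDE alpha approchant l'EBSDE} at $t=0$. Since $Y^{x,\alpha}$ is bounded by $M_\psi/\alpha$ (Lemma~\ref{lemm existence unicit� BSDE monotone alpha}) and $Z^{x,\alpha}\in\M$, the martingale term vanishes in expectation, giving
\begin{align*}
v^\alpha(x) = \E\!\left[\int_0^T e^{-\alpha s}\psi(V_s^x,Z_s^{x,\alpha})\,\der s + e^{-\alpha T}Y_T^{x,\alpha}\right].
\end{align*}
The cleanest route, following \cite{FUHRMAN_HU_TESSITORE_ERGODIC_BSDE}, is to represent $v^\alpha$ via a decoupling involving the semigroup $\mathscr{P}_t$ and to use \eqref{estimates on kolmogorov semigroup Pt}: because $v^\alpha(V_t^x) = Y_t^{x,\alpha}$ is Markovian and $\psi$ is Lipschitz in $z$, one linearizes the driver in $z$, writes $\beta_s := \Upsilon(V_s^x)$ using the function of Lemma~\ref{lemm zeta approchee} (with $\zeta=\zeta^\alpha$, $\zeta'=0$), which is uniformly bounded, and performs a Girsanov change of measure. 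Under the new measure $\hat\Pb$ the process $V^x$ satisfies an SDE with drift perturbed by a bounded measurable term, so Lemma~\ref{lemm basic coupling extended} and the corollary following Lemma~\ref{lemm estimates on semi groupe kolmogorov} guarantee that the coupling estimate \eqref{estimates on kolmogorov semigroup Pt} still holds under $\hat\Pb$, with constants depending on $f$ only through $\eta$ and $B$. Then
\begin{align*}
|v^\alpha(x)-v^\alpha(y)| = \left|\int_0^\infty \alpha e^{-\alpha t}\bigl(\Pt[\psi(\cdot,0)](x)-\Pt[\psi(\cdot,0)](y)\bigr)\,\der t\right|
\end{align*}
(up to the linearizing measure change) is bounded, via \eqref{estimates on kolmogorov semigroup Pt}, by $C(1+|x|^2+|y|^2)\int_0^\infty \alpha e^{-\alpha t}e^{-\mu t}\,\der t \le C(1+|x|^2+|y|^2)$, uniformly in $\alpha$. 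The $\alpha$-independence is exactly what lets the integral $\int_0^\infty \alpha e^{-(\alpha+\mu)t}\der t = \alpha/(\alpha+\mu)\le 1$ stay bounded.

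For the second, Lipschitz-in-the-small inequality, the extra factor $|x-y|$ must come from a gradient estimate. Here Hypothesis~\ref{hypo f et psi C^1 with partial derivative bounded} is essential: with $f,\sigma,\psi$ of class $\mathscr{C}^1$ one can differentiate the flow $V_t^x$ in $x$ and obtain $\nabla v^\alpha(x) = \E[\cdots]$ via the first-variation process $\partial_x V_t^x$, or equivalently use the Bismut--Elworthy--Li formula to represent $\nabla v^\alpha(x)$ against $\sigma^{-1}$ (whence the need for $\sigma^{-1}$ bounded). The key quantitative input is the exponential decay in time of $|\partial_x V_t^x|$ coming from dissipativity, which under the coupling framework I expect to encode through a differentiated version of the semigroup estimate; integrating $\nabla v^\alpha$ along the segment from $y$ to $x$ then produces the factor $|x-y|$ while the polynomial prefactor $(1+|x|^2+|y|^2)$ arises from the polynomial growth bounds on $\partial_L d$ in Hypothesis~\ref{hypo f et psi C^1 with partial derivative bounded} together with the moment estimates of Lemma~\ref{lemm estimates dissipativity}.

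The main obstacle, and the step I would isolate and prove carefully (or relegate to the appendix), is the gradient estimate on $v^\alpha$ with an $\alpha$-uniform constant depending on $f$ only through $\eta$ and $B$. This is where the weak dissipativity is genuinely harder than the strong-dissipativity setting of \cite{Richou_Ergodic_BSDE_PDe_Neumann}: the perturbation $b$ destroys the naive exponential contraction of $|\partial_x V_t^x|$, so one cannot simply differentiate and bound in $L^2$. I expect the correct strategy is to combine the Girsanov linearization of the driver (moving the $z$-dependence into the measure) with the basic coupling estimate rather than a pointwise contraction, controlling the difference quotient $\tfrac{1}{|x-y|}(v^\alpha(x)-v^\alpha(y))$ directly through \eqref{estimates on kolmogorov semigroup Pt} applied to the differentiated semigroup, and then invoking the moment bounds of Lemma~\ref{lemm estimates dissipativity} and the polynomial-growth hypothesis to absorb the coefficients into the $(1+|x|^2+|y|^2)$ factor.
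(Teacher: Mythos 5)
Your treatment of the first inequality matches the paper's: the paper establishes it ``exactly as in Lemma 3.6 of \cite{Debu_Hu_Tess_weak_dissipative}'', i.e.\ by the Girsanov linearization of the driver in $z$ (via the bounded approximations of Lemma \ref{lemm zeta approchee}) combined with the coupling estimate (\ref{estimates on kolmogorov semigroup Pt}) under the shifted measure, and your computation $\int_0^\infty\alpha e^{-(\alpha+\mu)t}\der t\le 1$ is exactly the source of the $\alpha$-uniformity. For the second inequality, however, you have correctly isolated the hard step --- an $\alpha$-uniform gradient bound $|\nabla v^\alpha(x)|\le C(1+|x|^2)$ --- but you have not proved it, and the mechanism you propose for it (exponential decay of the first variation $\partial_x V^x_t$, or a ``differentiated'' coupling estimate) is not the one that works and is not what the paper uses. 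Indeed, as you yourself note, $b$ destroys any pointwise contraction of the flow, and no coupling estimate for the differentiated semigroup is available in this setting.

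The paper's resolution requires no long-time control of $\nabla V^x_t$ at all. One first truncates $d_m=\Phi_m d$ to make the drift globally Lipschitz (so that the Ma--Zhang representation theorem applies; this step is needed because $d$ is only locally Lipschitz with polynomial growth, and you omit it), obtaining $Z^{x,\alpha}_s=\nabla v^\alpha(V^x_s)\sigma(V^x_s)$ and the Bismut--Elworthy type formula
\begin{align*}
\nabla \overline{v}^{\alpha}(x) = \E\left[\overline{v}^{\alpha}(V_T^x)N_T^{0} + \int_0^T  \left[\psi(V_r^x,Z_r^{x,\alpha}) - \alpha \overline{v}^{\alpha}(V_r^x) - \alpha v^{\alpha}(0)\right]N_r^0\der r  \right],
\end{align*}
with $\overline{v}^\alpha=v^\alpha-v^\alpha(0)$, on a \emph{short, fixed} horizon $T$. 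The $\alpha$-uniformity then comes from two sources: the terminal term is controlled by the \emph{first} estimate (already uniform in $\alpha$) applied to $\overline{v}^\alpha(V^x_T)$ together with $\E|N^0_T|^2\le C_T/T$; and the $Z$-term is absorbed by a fixed-point argument in the weighted norm $|||\nabla v^\alpha|||=\sup_x|\nabla v^\alpha(x)|/(1+|x|^2)$, yielding $|||\nabla v^\alpha|||\le C(1+1/\sqrt T)+C\sqrt T\,|||\nabla v^\alpha|||$, which closes for $T$ small. In other words, the long-time decay is entirely encoded in the first (coupling-based) estimate, and the gradient bound is a purely local bootstrap on top of it; this is the missing idea in your proposal.
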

\begin{proof}
Let us introduce, as in the paper \cite{WEAK_DIFFERENTIABILITY_OF_SOLUTIONS_SEMI_MONOTONE_DRIFTS}, equation $(3.4)$,  some smooth functions $\Phi_m : \R^d \rightarrow \R$ such that defining
\begin{align*}
d_m(x) = \Phi_m(x) d(x), \forall x \in \R^d,
\end{align*}
$d_m$ is globally Lipschitz, continuously differentiable and for each multi-index $L$ with $|L| \leq  1$,
\begin{align}\label{estimee sur multinindice d_m}
\sup_{m,x} \left\{ | \partial_L \phi_m(x)| + |d(x)\partial_L \phi_m(x)| \right\} \leq C,
\end{align}
for some $C > 0$. We recall that $\phi_m = 1$ on $A_m := \left\{ x \in \R^d; |x|\leq  m^\xi \right\}$. Furthermore,
\begin{align*}
\nabla d_m(x) &= \begin{pmatrix}
\frac{\partial \phi_m(x)}{\partial x_1} d_1(x) & \dots & \frac{\partial \phi_m(x)}{\partial x_d} d_1(x) \\
\vdots & & \vdots \\
 \frac{\partial \phi_m(x)}{\partial x_1} d_d(x) &\dots &  \frac{\partial \phi_m(x)}{\partial x_d} d_d(x) 
\end{pmatrix} + \phi_m(x)\nabla d(x) \\
&= d(x) \nabla \phi_m(x) + \phi_m(x) \nabla d(x).
\end{align*}
Now let us consider $(V_t^{x,m})_{t \geq 0}$ the unique solution of 
\begin{align}\label{SDE lipschitize}
 V_t^{x,m} = x + \int_0^t [d_m(V_s^{x,m}) + b(V_s^{x,m})] \der s + \int_0^t \sigma(V_s^{x,m}) \der W_s, ~~\forall t \geq 0.
\end{align}
We recall that, for each $t \geq 0$ and $p > 1$, $V_t^{x,m}$ converges to $V_t$ in $L^p$ and almost surely.\\
Furthermore the following estimates hold, thanks to the proof of Lemma $3.2$ in \cite{WEAK_DIFFERENTIABILITY_OF_SOLUTIONS_SEMI_MONOTONE_DRIFTS}, for every $p \in \N \setminus \{0,1\}$, there exists $C_p > 0$ such that 
\begin{align}\label{estimates V_t^{x,m}}
\sup_{m \geq 1} \sup_{0 \leq t \leq T} \E \left[ |V_t^{x,m}|^p \right] \leq C_p(1+|x|^{p \nu})e^{C_p T}.
\end{align}
We denote by $\nabla V_s^{x,m} = (\nabla_1 V_s^{x,m} , \dots , \nabla_d V_s^{x,m})$ the solution of the following variational equation (see equation ($2.9$) in \cite{MA_ZHANG_REPRESENTATION_THEOREMS_FOR_BACKWARD_STOCHASTIC_DIFFERENTIAL_EQUATIONS}):
\[\nabla_i V_s^{x,m} = e_i + \int_t^s\nabla (d_m+b)(V_r^{x,m})\nabla_i V_r^{x,m} \der r  + \sum_{j = 1}^d\int_t^s [ \nabla \sigma^j(V_r^{x,m})]\nabla_i V_r^{x,m} \der W_r^j.
\]
Let us mention that the following estimate holds, for every $p>1$:
\begin{align}\label{estimee gradient Vm}
\sup_{m \geq 1} \sup_{0 \leq t \leq T} \E \left[ |\nabla V_t^{x,m}|^p \right] \leq C_{p,T}.
\end{align}

Let us denote by $(Y_t^{x,\alpha,m},Z_t^{x,\alpha,m})_{t \geq 0}$ the unique solution of the following monotone BSDE in infinite horizon, $\forall 0 \leq t \leq T < + \infty$,
\begin{align*}
Y_t^{x,\alpha,m} = Y_T^{x,\alpha,m} + \int_t^T [\psi(V_s^{x,m},Z_s^{x,\alpha,m}) - \alpha Y_s^{x,\alpha,m}] \der s - \int_t^T Z_s^{x,\alpha,m} \der W_s.
\end{align*}
We recall that if we denote by $v^{\alpha,m}$ the following quantity:
\begin{align*}
v^{\alpha,m}(x):=Y_0^{x,\alpha,m}
\end{align*}
then $Y_s^{x,\alpha,m} = v^{\alpha,m}(V_s^x)$.

Now remark that Theorem $4.2$ in \cite{MA_ZHANG_REPRESENTATION_THEOREMS_FOR_BACKWARD_STOCHASTIC_DIFFERENTIAL_EQUATIONS} asks for the terminal condition of the BSDE to be Lipschitz, whereas in our case, the terminal condition $v^{\alpha,m}$ is only continuous and bounded. However, using the same method as that in Theorem $4.2$ in \cite{FUHRMAN_TESSITORE_BISMUT_ELWORTHY_FORMULA_BSDE}, we can readily extend Theorem $4.2$ in \cite{MA_ZHANG_REPRESENTATION_THEOREMS_FOR_BACKWARD_STOCHASTIC_DIFFERENTIAL_EQUATIONS}  for a Markovian terminal condition which is only continuous with polynomial growth, which is our case here.
So, by Theorem $4.2$ in \cite{MA_ZHANG_REPRESENTATION_THEOREMS_FOR_BACKWARD_STOCHASTIC_DIFFERENTIAL_EQUATIONS}, $v^{\alpha,m}$ is continuously differentiable,
\begin{align*}
Z_s^{x,\alpha,m} = \nabla v^{\alpha,m}(V_s^{x,m}) \sigma (V_s^{x,m}), ~~~~ \forall s \in [0,T],~~~~ \Pb\text{-a.s},
\end{align*}
and
\begin{align*}
\nabla v^{\alpha,m}(x) = \E\left[v^{\alpha}(V_T^{x,m})N_T^{0,m} + \int_0^T\left[\psi(V_r^{x,m},Z_r^{x,\alpha,m}) -  \alpha Y_r^{x,\alpha,m} \right]N_r^{0,m} \der r\right],
\end{align*}
where 
\begin{align*}
N_r^{0,m} = \frac{1}{r}\transpose \left(\int_0^r \transpose[\sigma^{-1}(V_s^{x,m})\nabla V_s^{x,m}]\der W_s \right).
\end{align*}
We immediately deduce the following estimate
\begin{align}\label{estimee Nrm}
\E|N_r^{0,m}|^2 &= \frac{1}{|r|^2} \E \int_0^r |\sigma^{-1}(V_s^{x,m}) \nabla V_s^{x,m}|^2 \der s\nonumber \\
&\leq \frac{1}{|r|^2} \sigma_\infty^2 \int_0^r \E |\nabla V_s^{x,m}|^2 \der s\nonumber \\
&\leq \frac{C_{T}}{r}.
\end{align}
As a consequence we can immediately get a uniform bound in $m$ for $\nabla v^{\alpha,m}(x)$. Indeed, 
\begin{align*}
|\nabla v^{\alpha,m}(x)| &\leq \frac{M_\psi}{\alpha}\sqrt{\E\left(\left|N_T^{0,m}\right|^2\right)} + \int_0^T \sqrt{\E\left[(2M_\psi + M_\psi|Z_s^{x,\alpha,m}|)^2\right]}\sqrt{\E\left(\left|N_r^{0,m}\right|^2\right)} \der r \\
&\leq \frac{M_\psi}{\alpha}\frac{C_{T}}{\sqrt{T}} + \int_0^T C\left(1+\sqrt{\E|\nabla v^{\alpha,m}(V_r^{x,m})|^2}\right)\frac{1}{\sqrt{r}}\der r.
\end{align*}
We define $||| \nabla v^{\alpha,m}(x) ||| := \sup_{x \in \R^d} |\nabla v^{\alpha,m}(x)|$, then,
\begin{align*}
|\nabla v^{\alpha,m}(x)| &\leq \frac{C_{T,\alpha}}{\sqrt{T}} + C \sqrt{T} (1+||| \nabla v^{\alpha,m} |||),
\end{align*}
which implies that, taking the supremum over $x$ and for $T$ small enough, for all $x \in \R^d$, 
\begin{align}\label{premiere estimee sur le gradient v{alpha,m}}
|\nabla v^{\alpha,m}(x)| \leq C_{T,\alpha}.
\end{align}

Now we claim that for every $T \geq 0$,
\begin{align*}
\E\left[ |Y_T^{x,\alpha,m} - Y_T^{x,\alpha}|^2 \right] + \E \int_0^T |Z_s^{x,\alpha,m} - Z_s^{x,\alpha}|^2\der s \underset{ m \rightarrow + \infty}{\longrightarrow}0.
\end{align*}
For that purpose, let us denote by $(Y_t^{x,\alpha,n},Z_t^{x,\alpha,n})$ the solution of the following finite horizon BSDE, for all $0 \leq t \leq n$,
\begin{align*}
Y_t^{x,\alpha,n} = 0 + \int_t^{n} \left[\psi(V_s^{x},Z_s^{x,\alpha,n}) - \alpha Y_s^{x,\alpha,n}\right] \der s - \int_t^n Z_s^{x,\alpha,n} \der W_s.
\end{align*}

By inequality ($12$) in \cite{BRIAND_HU_STABILITY_BSDE_HOMOGENIZATION}, $\Pb\text{-a.s.}$, for all $0 \leq t \leq n$,
\begin{align*}
\E|Y_t^{x,\alpha,n}-Y_t^{x,\alpha}|^2 + \E \int_0^t |Z_t^{x,\alpha,n}-Z_t^{x,\alpha}|^2 \der s \leq Ce^{-2\alpha n},
\end{align*}
where $C$ depends only on $M_\psi$ and $\alpha.$
Similarly, let us denote by $(Y_t^{x,\alpha,m,n},Z_t^{x,\alpha,m,n})$ the solution of the following finite horizon BSDE, for all $0 \leq t \leq n$,
\begin{align*}
Y_t^{x,\alpha,m,n} = 0 + \int_t^{n}\left[ \psi(V_s^{x,m},Z_s^{x,\alpha,m,n}) - \alpha Y_s^{x,\alpha,m,n}\right] \der s - \int_t^n Z_s^{x,\alpha,m,n} \der W_s.
\end{align*}
Again, by inequality ($12$) in \cite{BRIAND_HU_STABILITY_BSDE_HOMOGENIZATION}, $\Pb\text{-a.s.}$, for all $0 \leq t \leq n$,
\begin{align*}
\E|Y_t^{x,\alpha,m,,n}-Y_t^{x,\alpha,m}|^2 + \E \int_0^t |Z_t^{x,\alpha,m,n}-Z_t^{x,\alpha,m}|^2 \der s \leq Ce^{-2\alpha n},
\end{align*}
where $C$ depends only on $M_\psi$ and $\alpha$.

Furthermore, thanks to the continuity of $\psi$ in $x$, the following stability result for BSDEs in finite horizon holds (see for example Lemma $2.3$ in \cite{BRIAND_HU_STABILITY_BSDE_HOMOGENIZATION}), for all $0 \leq t \leq n$:
\begin{align*}
\E|Y_t^{x,\alpha,m,n} - Y_t^{x,\alpha,n}|^2 + \E \int_0^n |Z_t^{x,\alpha,m,n} - Z_t^{x,\alpha,n}|^2 \der s \underset{m \rightarrow \infty}{\longrightarrow} 0.
\end{align*}
Then,
\begin{align*}
\E\left[ |Y_T^{x,\alpha,m} - Y_T^{x,\alpha}\right.&\left.|^2 \right] + \E \int_0^T |Z_s^{x,\alpha,m} - Z_s^{x,\alpha}|^2\der s \\
&\leq 3\E\left[ |Y_T^{x,\alpha,m} - Y_T^{x,\alpha,m,n}|^2 \right] + 3\E\left[ |Y_T^{x,\alpha,m,n} - Y_T^{x,\alpha,n}|^2\right]\\
&~~~~~+ 3\E\left[ |Y_T^{x,\alpha,n} - Y_T^{x,\alpha}|^2 \right] + 3\E \int_0^T |Z_s^{x,\alpha,m} - Z_s^{x,\alpha,m,n}|^2\der s \\
&~~~~~+ 3\E \int_0^T |Z_s^{x,\alpha,m,n} - Z_s^{x,\alpha,n}|^2\der s +3\E \int_0^T |Z_s^{x,\alpha,n} - Z_s^{x,\alpha}|^2\der s \\
&\leq Ce^{-2 \alpha n} + 3\E\left[ |Y_T^{x,\alpha,m,n} - Y_T^{x,\alpha,n}|^2\right] + 3\E \int_0^T |Z_s^{x,\alpha,m,n} - Z_s^{x,\alpha,n}|^2\der s.
\end{align*}
Now, for every $\varepsilon > 0$, we pick $n$ large enough such that $2\frac{M_\psi}{\alpha}e^{-\alpha n} < \varepsilon/2$. Then, we choose $m$ large enough such that $3\E\left[ |Y_T^{x,\alpha,m,n} - Y_T^{x,\alpha,n}|^2\right] + 3\E \int_0^T |Z_s^{x,\alpha,m,n} - Z_s^{x,\alpha,n}|^2\der s< \varepsilon/2$. This shows that, for all $x \in \R^d$ and $T\geq 0$,
\begin{align}\label{convergence Ym et Zm vers Y et Z}
\E\left[ |Y_T^{x,\alpha,m} - Y_T^{x,\alpha}|^2 \right] + \E \int_0^T |Z_s^{x,\alpha,m} - Z_s^{x,\alpha}|^2\der s \underset{ m \rightarrow + \infty}{\longrightarrow}0.
\end{align}

In particular taking $T =0$ we deduce that for every $x \in \R^d$,
\begin{align*}
\lim_{m \rightarrow + \infty } v^{\alpha,m}(x) = v^{\alpha}(x).
\end{align*}
Therefore, if we show that $\lim_{m \rightarrow + \infty}\nabla v^{\alpha,m}(x) = h^{\alpha}(x)$ for some function $h^{\alpha}(x)$ then this will imply that $v^{\alpha}$ is continuously differentiable and $\nabla v^{\alpha} = h^{\alpha}$. Furthermore, as $\E \int_0^T |Z_s^{x,\alpha,m} - Z_s^{x,\alpha}|^2\der s \underset{m \rightarrow + \infty}{\longrightarrow} 0$ and $Z_s^{x,\alpha,m} = \nabla v^{\alpha,m}(V_s^{x,m}) \sigma(V_s^{x,m})$, then it will imply that for a.a. $s \geq 0$, $\Pb$-a.s., 
\begin{align}\label{representation pour Z^alpha}
Z_s^{x,\alpha} = \nabla v^{\alpha}(V_s^x)\sigma(V_s^x).
\end{align}

Now we claim that $h^{\alpha}$ can be written as
\begin{align*}
h^{\alpha}(x) = \E\left[v^{\alpha}(V_T^{x})N_T^{0} + \int_0^T\left[\psi(V_r^{x},Z_r^{x,\alpha}) -  \alpha Y_r^{x,\alpha} \right]N_r^{0} \der r\right],
\end{align*}
where 
\begin{align*}
N_r^{0} = \frac{1}{r}\transpose \left(\int_0^r \transpose[\sigma^{-1}(V_s^{x})\nabla V_s^{x}]\der W_s \right).
\end{align*}

Indeed, for every $x \in \R^d$,
\begin{align*}
|\nabla &v^{\alpha,m}(x)-h^{\alpha}(x)|\\
&\leq \E|v^{\alpha,m}(V_T^{x,m})N_T^{0,m} - v^{\alpha}(V_T^{x})N_T^0| \\
&~~~~+ \E\int_0^T |(\psi(V_r^{x,m},Z_r^{x,\alpha,m}) - \alpha Y_r^{x,\alpha,m})N_r^{0,m} - (\psi(V_r^{x},Z_r^{x,\alpha})-\alpha Y_r^{x,\alpha})N_r^0|\der r\\
&\leq \E|v^{\alpha,m}(V_T^{x,m})N_T^{0,m}-v^{\alpha,m}(V_T^{x,m})N_T^{0}| + \E|v^{\alpha,m}(V_T^{x,m})N_T^0-v^{\alpha}(V_T^{x})N_T^0|\\
&~~~~+ \E\int_0^T |(\psi(V_r^{x,m},Z_r^{x,\alpha,m})-\alpha Y_r^{x,\alpha,m})N_r^{0,m}-(\psi(V_r^{x,m},Z_r^{x,\alpha,m})-\alpha Y_r^{x,\alpha,m})N_r^{0}|\der r\\
&~~~~+ \E\int_0^T|(\psi(V_r^{x,m},Z_r^{x,\alpha,m})-\alpha Y_r^{x,\alpha,m})N_r^0-(\psi(V_r^{x},Z_r^{x,\alpha})-\alpha Y_r^{x,\alpha})N_r^0|\der r\\
&\leq \frac{M_\psi}{\alpha}\sqrt{\E(|N_T^{0,m}-N_T^0|^2)} + \sqrt{\E(|N_T^0|^2)}\sqrt{\E(|v^{\alpha,m}(V_T^{x,m})-v^{\alpha}(V_T^{x})|^2)}\\
&~~~~+ \int_0^T \sqrt{C(1+\E(|\nabla v^{\alpha,m}(V_s^{x,m})|^2))}\sqrt{\E(|N_r^{0,m}-N_r^{0}|^2)}\der r\\
&~~~~+ \int_0^T\sqrt{\E(|N_r^0|^2)} \sqrt{\E(|\psi(V_r^{x,m},Z_r^{x,\alpha,m})-\alpha Y_r^{x,\alpha,m}-\psi(V_r^{x},Z_r^{x,\alpha})+\alpha Y_r^{x,\alpha}|^2)} \der r\\
&\leq \frac{M_\psi}{\alpha}\sqrt{\E(|N_T^{0,m}-N_T^0|^2)} + \sqrt{\E(|N_T^0|^2)}\sqrt{\E(|v^{\alpha,m}(V_T^{x,m})-v^{\alpha}(V_T^{x})|^2)}\\
&~~~~+ C_{T,\alpha}(\sqrt{T}+\frac{1}{\sqrt{T}})\int_0^T \sqrt{\E(|N_r^{0,m}-N_r^{0}|^2)} \der r\\
&~~~~+ \int_0^T \frac{C_{T}}{\sqrt{r}} C_{T,\alpha}\sqrt{\E(|Z_r^{x,\alpha,m}-Z_r^{x,\alpha}|^2)}\der r\\
&~~~~+ \sqrt{\E(|\psi(V_r^{x,m},Z_r^{x,\alpha})-\psi(V_r^{x},Z_r^{x,\alpha})|^2)} \der r \\
&~~~~+\int_0^T \frac{C_T}{\sqrt{r}} \sqrt{\E(|Y_r^{x,\alpha,m}-Y_r^{x,\alpha}|^2)}  \der r
\end{align*}
where we have used the estimate (\ref{premiere estimee sur le gradient v{alpha,m}}) for the last inequality.

We have
\begin{align*}
\E(|N_T^{0,m}-N_T^0|^2) = \frac{1}{T^2}\int_0^T \E(|\sigma^{-1}\nabla V_s^{x,m} - \sigma^{-1}\nabla V_s^x|^2) \der s \underset{m \rightarrow + \infty}{\longrightarrow} 0,
\end{align*}
since $\sigma^{-1}(V_s^{x,m})\nabla V_s^{x,m} - \sigma^{-1}(V_s^x)\nabla V_s^x \underset{m \rightarrow + \infty}{\longrightarrow} 0 $ $\Pb$-a.s. and since
\begin{align*}
\sup_m \E(|\sigma^{-1}(V_s^{x,m})\nabla V_s^{x,m} - \sigma^{-1}(V_s^x)\nabla V_s^x|^4) < +\infty
\end{align*}
by estimate (\ref{estimee gradient Vm}).

The second and the third term in the sum converge toward $0$ by the dominated convergence theorem. The fourth one converges toward $0$ by Jensen's inequality and the dominated convergence theorem and the last two ones converge toward $0$ by the dominated convergence theorem.\\
Now the first estimate of the lemma can be established exactly as in Lemma $3.6$ of \cite{Debu_Hu_Tess_weak_dissipative} thanks to the representation formula (\ref{representation pour Z^alpha}). \\
Let us establish the second inequality of the lemma. We have, using the following notation $\overline{v}^{\alpha}(x) = v^{\alpha}(x)-v^{\alpha}(0)$,
\begin{align}\label{applique formule représentation MA ZHANG}
|\nabla \overline{v}^{\alpha}(x)| = \left| \E\left[\overline{v}^{\alpha}(V_T^x)N_T^{0} + \int_0^T  \left[\psi(V_s^x,Z_s^x) - \alpha \overline{v}^{\alpha}(V_r^x) - \alpha v^{\alpha}(0)\right]N_r^0\der r  \right] \right|.
\end{align}
We have, using the first inequality of the lemma and inequality (\ref{estimee Nrm}):
\begin{align*}
\E|\overline{v}^{\alpha}(V_T^{x})N_T^{0}| \leq C\frac{(1+|x|^2)}{\sqrt{T}}.
\end{align*}
Furthermore, since we can assume that $\alpha \leq 1$:
\begin{align*}
\E \int_0^T &\left|\left[\psi(V_r^{x},Z_r^{x,\alpha}) -  \alpha \overline{Y}_r^{x,\alpha}- \alpha v^{\alpha}(0) \right]N_r^{0} \right| \der r \\
&\leq C\E \int_0^T \left(M_\psi + M_\psi|Z_r^{x,\alpha}| + C(1+|V_r^{x}|^2) + M_\psi \right)|N_r^{0}| \der r \\
& \leq C\E\int_0^T |N_r^{0}| \der r + C\E\int_0^T |Z_r^{x,\alpha}||N_r^{0}| \der r + C\E\int_0^T (1+|V_r^{x}|^2)|N_r^{0}| \der r  \\
& =: I_1 + I_2 + I_3.
\end{align*}
We easily get $I_1 \leq C$. Furthermore, thanks to the representation formula (\ref{representation pour Z^alpha}) and the fact that $|\sigma|$ is bounded:
\begin{align*}
I_2 \leq C\int_0^T \sqrt{\E|\nabla v^{\alpha}(V_s^{x})|^2}\frac{1}{\sqrt{r}} \der r.
\end{align*}
Regarding $I_3$, we easily get $I_3 \leq C(1+|x|^2)$.\\
Now we define $|||\nabla v^{\alpha} ||| := \sup_{x \in \R^d} \frac{|\nabla v^{\alpha}(x)|}{1+|x|^2}$, then coming back to equation (\ref{applique formule représentation MA ZHANG}), we have
\begin{align*}
|\nabla v^{\alpha}(x)| &\leq C\left(1 +|x|^2 + \frac{1+|x|^2}{\sqrt{T}}\right) + C\int_0^T \sqrt{\E(1+|V_s^{x}|^2)^2} ||| \nabla v^{\alpha} |||\frac{1}{\sqrt{r}} \der r \\
&\leq C\left(1 +|x|^2 + \frac{1+|x|^2}{\sqrt{T}}\right) + C\int_0^T (1+|x|^2) ||| \nabla v^{\alpha} |||\frac{1}{\sqrt{r}} \der r.
\end{align*}
This implies
\begin{align*}
|||\nabla v^{\alpha}||| \leq C\left(1+\frac{1}{\sqrt{T}}\right) + C \sqrt{T}|||\nabla v^{\alpha} ||||.
\end{align*}
Thus, for $T$ small enough:
\begin{align*}
|||\nabla v^{\alpha} ||| \leq C\left(1+\frac{1 + \sqrt{T}}{\sqrt{T}(1-C\sqrt{T})}\right),
\end{align*}
which implies that, for all $x \in \R^d$,
\begin{align*}
|\nabla v^{\alpha}(x)| \leq C(1+|x|^2).
\end{align*}
This last estimate gives us, for all $x, y \in \R^d$,
\begin{align*}
|v^{\alpha}(x) - v^{\alpha}(y)| \leq C(1+|x|^2+|y|^2)|x-y|.
\end{align*}
\end{proof}


Thanks to this estimate, it is possible to get an existence result for EBSDE (\ref{EBSDE général avec f}). Here Hypothesis $\ref{hypo f et psi C^1 with partial derivative bounded}$ can be removed thanks to a convolution argument which will appear in the proof.

\begin{théo}\label{théorème existence EDSRE cas général avec f dissipatif}
Assume that the hypotheses (\ref{hypo f weakly dissipative, sigma invertible}) and (\ref{hypo psi lipschitz en x,z et borné en x}) hold true. Then there exists a solution $(\overline{Y}^{x},\overline{Z}^{x},\overline{\lambda})$ to EBSDE (\ref{EBSDE général avec f}) such that $\overline{Y}_\cdot^x=\overline{v}(V_\cdot^x)$ with $\overline{v}$ locally Lipschitz, and there exists a measurable function $\overline{\xi} : \R^{d} \rightarrow \R^{1 \times d}$ such that $\overline{Z}^x \in \M$ and $\overline{Z}_\cdot^x = \overline{\xi}(V_\cdot^x)$.
 \end{théo}
 \begin{proof}
 We start by regularizing $f$ and $\psi$ thanks to classical convolution arguments. For all $k \in \N^*$ let us denote by $\rho_{\varepsilon}^k : \R^k \rightarrow \R_+$ the classical mollifier for which the support is the ball of center $0$ and radius $\varepsilon$. Let us denote for a sequence $(\varepsilon_n)_{n \in \N} \in \R_+$  such that $\varepsilon_n \underset{n \rightarrow + \infty}{\longrightarrow} 0$, $d^{\varepsilon_n} := d*\rho_{\varepsilon_n}^d$, $b^{\varepsilon_n} := b*\rho_{\varepsilon_n}^d$,  $\psi^{\varepsilon_n} := \psi*\rho_{\varepsilon_n}^{d,d}$ and $\sigma^{\varepsilon_n} := \sigma*\rho_{\varepsilon_n}^{d \times d}$. Those functions are $\mathscr{C}^1$ and satisfies:
 \begin{itemize}
 \item $d^{\varepsilon_n}$ is $\eta$-dissipative;
 \item $|d^{\varepsilon_n}(x)| \leq C(1+|x|)^p$, for a $p \geq 0$;
 \item $|\nabla d^{\varepsilon}(x)| \leq C_\varepsilon(1+|x|^q)$, for a $q \geq 0$;
 \item $b^{\varepsilon_n}$ is bounded  by $B$;
 \item $\psi^{\varepsilon_n}$  satisfies Hypothesis \ref{hypo psi lipschitz en x,z et borné en x};
 \item $\sigma^{\varepsilon_n} $ is invertible;
 \item $d^{\varepsilon_n}\rightarrow d$, $b^{\varepsilon_n} \rightarrow b$, $\psi^{\varepsilon_n} \rightarrow \psi$, $\sigma^{\varepsilon_n} \rightarrow \sigma$ pointwisely as $\varepsilon_n \rightarrow 0$.
 \end{itemize}
Note now that Hypothesis \ref{hypo f et psi C^1 with partial derivative bounded} is satisfied by the regularized functions defined above, therefore Lemma \ref{Lemm estimates on v^alpha} can be applied. 
 We just precise that the pointwise convergence of the regularized functions is a consequence of the continuity of the functions $d$, $b$, $\psi$ and $\sigma$. 
 We denote by $V_t^{x,\varepsilon_n}$ the solution of  (\ref{SDE avec f}) with $f$ replaced by $f^{\varepsilon_n}$ and $\sigma$ replaced by $\sigma^{\varepsilon_n}$. The same notation is used for the regularized BSDE, we denote by  $(Y_t^{x,\alpha,\varepsilon_n}, Z_t^{x,\alpha,\varepsilon_n})$ the solution in $\SSS \times \M$ of BSDE (\ref{BSDE alpha approchant l'EBSDE}) with $\psi$ replaced by $\psi^{\varepsilon_n}$ (existence and uniqueness of such a solution is guaranteed by Lemma \ref{lemm existence unicité BSDE monotone alpha}), namely  $ \forall 0\leq t\leq T < +\infty$:
\begin{align}\label{EDSR_avec_generateur_monotone_et_drift_forward_penalise}
Y_t^{x,\alpha,\varepsilon_n} = &Y_T^{x,\alpha,\varepsilon_n} + \int_t^T (\psi(V_s^{x,\varepsilon_n},Z_s^{x,\alpha,\varepsilon_n}) - \alpha Y_s^{x,\alpha,\varepsilon_n}) \der s - \int_t^T Z_s^{x,\alpha,\varepsilon_n} \der W_s.
\end{align}
Then we define $v^{\alpha,\varepsilon_n}(x) := Y_0^{x,\alpha,\varepsilon_n}$ and $\overline{Y}_t^{x,\alpha,\varepsilon_n} = Y_t^{x,\alpha,\varepsilon_n} - \alpha v^{\alpha,\varepsilon_n}(0)$. We can rewrite the BSDE and we get:
\begin{align*}
\overline{Y}_t^{x,\alpha,\varepsilon_n} = &\overline{Y}_T^{x,\alpha,\varepsilon_n} + \int_t^T (\psi(V_s^{x,\varepsilon_n},Z_s^{x,\alpha,\varepsilon_n}) - \alpha \overline{Y}_s^{x,\alpha,\varepsilon_n} - \alpha v^{\alpha,\varepsilon_n}(0)) \der s\\
&- \int_t^T Z_s^{x,\alpha,\varepsilon_n} \der W_s, ~~~0\leq t\leq T < +\infty .
\end{align*}

Uniqueness of solutions implies that $v^{\alpha,\varepsilon_n}(V_s^{x,\varepsilon_n}) = Y_s^{x,\alpha,\varepsilon_n}$. Now, in a very classical way,  we set $\overline{v}^{\alpha,\varepsilon_n}(x) = v^{\alpha,\varepsilon_n}(x)-v^{\alpha,\varepsilon_n}(0)$. Thanks to the fact that $\alpha |v^{\alpha,\varepsilon_n}(0)| \leq M_\psi$ and by Lemma \ref{Lemm estimates on v^alpha} we can extract a subsequence $\beta(\varepsilon_n) \underset{n \rightarrow + \infty}{\rightarrow} 0$ such that $\forall \alpha > 0$, $\forall x \in D$ a countable subset of $\R^d$: 
\[ \overline{v}^{\alpha,\beta(\varepsilon_n)}(x) \underset{n \rightarrow + \infty}{\longrightarrow} \overline{v}^{\alpha}(x)    \t{~~~and~~~} \alpha v^{\alpha,\beta(\varepsilon_n)}(0) \underset{n \rightarrow + \infty}{\longrightarrow} \overline{\lambda}^{\alpha},\]
for a suitable function $\overline{v}$ and a suitable real $\overline{\lambda}^{\alpha}$. Now thanks to the estimates from Lemma \ref{Lemm estimates on v^alpha} we have $\forall \alpha > 0$, $|\overline{v}^{\alpha,\beta(\varepsilon_n)}(x)-\overline{v}^{\alpha,\beta(\varepsilon_n)}(x')| \leq c(1+|x|^2+|x'|^2)|x-x'|$  for all $x,x' \in \R^d$. Therefore extending $\overline{v}^{\alpha}$ to the whole $\R^d$ by setting $\overline{v}^{\alpha}(x) = \lim_{x_p \rightarrow x}\overline{v}^{\alpha}(x_p)$ we still have the following estimates: for all $x, x' \in \R^d$,
\[|\overline{v}^{\alpha}(x)-\overline{v}^{\alpha}(x')| \leq C(1+|x|^2+|x'|^2)|x-x'|.\]
In addition, we also have 
\begin{align*}
|\overline{\lambda}^\alpha| \leq M_\psi.
\end{align*}
Now let us define $\forall t \geq 0$, $\overline{Y}_t^{x,\alpha} = \overline{v}^{\alpha}(V_t^x)$. Let us show that 
 \[\E\int_0^T|\overline{Y}_s^{x,\alpha,\beta(\varepsilon_n)} - \overline{Y}_s^{x,\alpha}|^2\der s \underset{n \rightarrow + \infty}{\rightarrow} 0 \t{~~and~~}  \E|\overline{Y}_T^{x,\alpha,\beta(\varepsilon_n)} - \overline{Y}_T^{x,\alpha}|^2 \underset{n \rightarrow + \infty}{\rightarrow} 0.\]
First we write: 
\begin{align*}
|\overline{v}^{\alpha,\beta(\varepsilon_n)}(V_s^{x,\beta(\varepsilon_n)}) - \overline{v}^{\alpha}(V_s^x)| &\leq |\overline{v}^{\alpha,\beta(\varepsilon_n)}(V_s^{x,\beta(\varepsilon_n)})-\overline{v}^{\alpha,\beta(\varepsilon_n)}(V_s^x)| \\
&~~~~~~~~~~~~~~~~~~~~~~~~~~~~~~~~~~+ |\overline{v}^{\alpha,\beta(\varepsilon_n)}(V_s^x) - \overline{v}^{\alpha}(V_s^x)|\\
& \leq  C(1+|V_s^{x,\beta(\varepsilon_n)}|^2+|V_s^x|^2)|V_s^{x,\beta(\varepsilon_n)}-V_s^x| \\
&~~~~~~~~~~~~~~~~~~~~~~~~~~~~~~~~~~ + |\overline{v}^{\alpha,\beta(\varepsilon_n)}(V_s^x) - \overline{v}^{\alpha}(V_s^x)|,
\end{align*}
which shows the convergence of $\overline{v}^{\alpha,\beta(\varepsilon_n)}(V_s^{x,\beta(\varepsilon_n)})$ toward $\overline{v}^{\alpha}(V_s^x)$ almost surely, up to a subsequence (it is well known that $\forall T > 0$, $E \sup_{0 \leq t \leq T}|V_t^{x,\beta(\varepsilon_n)} - V_t^x|^2 \underset{n \rightarrow + \infty}{\longrightarrow} 0$).
Then, due to the fact that \\ $|\overline{v}^{\alpha,\beta(\varepsilon_n)}(V_s^{x,\beta(\varepsilon_n)})| \leq M_\psi/\alpha$ $\Pb$-a.s., we can apply the dominated convergence theorem to show that:
\[\E\int_0^T|\overline{Y}_s^{x,\alpha,\beta(\varepsilon_n)} - \overline{Y}_s^{x,\alpha}|^2\der s \underset{n \rightarrow + \infty}{\rightarrow} 0 \t{~~and~~}  \E|\overline{Y}_T^{x,\alpha,\beta(\varepsilon_n)} - \overline{Y}_T^{x,\alpha}|^2 \underset{n \rightarrow + \infty}{\rightarrow} 0.\]

Now we show that $(Z^{x,\alpha,\beta(\varepsilon_n)})_{n}$ is Cauchy in $\M$. 
 We denote 
\begin{align*}
\widetilde{V}_t=V_t^{x,\beta(\varepsilon_n)}-V_t^{x,\beta(\varepsilon_n)'};
\end{align*}
 \begin{align*}
 \widetilde{Y}_t=\overline{Y}_t^{x,\alpha,\beta(\varepsilon_n)}-\overline{Y}_t^{x,\alpha,\beta(\varepsilon_n)'};
\end{align*} 
 \begin{align*}
  \widetilde{Z}_t=\overline{Z}_t^{x,\alpha,\beta(\varepsilon_n)}-\overline{Z}_t^{x,\alpha,\beta(\varepsilon_n)'};
 \end{align*}
 and 
\begin{align*}
 \widetilde{\lambda}=\alpha v^{\alpha,\beta(\varepsilon_n)}(0) - \alpha v^{\alpha,\beta(\varepsilon_n)'}(0). 
\end{align*} 

Itô's formula applied to $|\widetilde{Y}_t|^2$ gives us, for all $\varepsilon_1, \varepsilon_2, \varepsilon_3 > 0$:
\begin{align*}
|\widetilde{Y}_t|^2 + \int_t^T|\widetilde{Z}_s|^2\der s &= |\widetilde{Y}_T|^2 +  2\int_t^T\widetilde{Y}_t[\psi(V_s^{x,\beta(\varepsilon_n)},Z_s^{x,\alpha,\beta(\varepsilon_n)})-\psi(V_s^{x,\beta(\varepsilon_n)'},Z_s^{x,\alpha,\beta(\varepsilon_n)'}) \\
&~~~~ - (\alpha \overline{Y}_s^{x,\alpha,\beta(\varepsilon_n)}-\alpha \overline{Y}_s^{x,\alpha,\beta(\varepsilon_n)'}) -\widetilde{\lambda}]\der s \\
&~~~~ -2\int_t^T\widetilde{Y}_s\widetilde{Z}_s\der W_s\\
& \leq |\widetilde{Y}_T|^2 +  (\varepsilon_1M_\psi+ \varepsilon_2M_\psi+\varepsilon_3)\int_t^T|\widetilde{Y}_s|^2 \der s + \frac{M_\psi}{\varepsilon_1}\int_t^T|\widetilde{V}_s|^2\der s \\
&~~~~+  \frac{M_\psi}{\varepsilon_2}\int_t^T|\widetilde{Z}_s|^2\der s +  \frac{1}{\varepsilon_3}\int_t^T|\widetilde{\lambda}|^2\der s +c\int_t^T|\widetilde{Y}_s|\der s - 2\int_t^T\widetilde{Y}_s\widetilde{Z}_s \der W_s,
\end{align*}
because $\alpha |v^{\alpha,\varepsilon}(0)| \leq M_\psi $. Thus, taking the expectation and for $\varepsilon_2$ large enough we get 
\[\E\int_0^T|\widetilde{Z}_s|^2 \der s \leq \E|\widetilde{Y}_T|^2 + c\left(\E\left[\int_0^T|\widetilde{Y}_s|^2\der s\right] + \E\left[\int_0^T|\widetilde{V_s}|^2 \der s\right] + \E\left[\int_0^T|\widetilde{Y}_s|\der s\right]+  T|\widetilde{\lambda}|^2 \right),\]
which proves that $(Z^{x,\alpha,\beta(\varepsilon_n)})_{\beta(\varepsilon_n)}$ is Cauchy in $\M$. 
Now we pass to the limit in equation (\ref{EDSR_avec_generateur_monotone_et_drift_forward_penalise}) to obtain:
\begin{align*}
Y_t^{x,\alpha} = &Y_T^{x,\alpha} + \int_t^T (\psi(V_s^{x},Z_s^{x,\alpha}) - \overline{\lambda}^{\alpha}) \der s - \int_t^T Z_s^{x,\alpha} \der W_s, ~~~0\leq t\leq T < +\infty .
\end{align*}

Now we reiterate the above method. Thanks to the following estimates: 
 $\forall x, x' \in \R^d$,
\[|\overline{v}^{\alpha}(x)-\overline{v}^{\alpha}(x')| \leq C(1+|x|^2+|x'|^2)|x-x'|,\]
and  
\begin{align*}
|\overline{\lambda}^\alpha| \leq M_\psi,
\end{align*}
it is possible, by a diagonal procedure, to construct a sequence $(\alpha_n)_n$ such that 
\begin{align*}
&\overline{v}^{\alpha_n}(x) \underset{n \rightarrow + \infty}{ \longrightarrow } \overline{v}(x)\\
&\overline{\lambda}^{\alpha_n} \underset{n \rightarrow + \infty}{ \longrightarrow } \overline{\lambda}.
\end{align*}
We define $\overline{Y}_t^x := \overline{v}(V_t^x)$. Let us just precise why
 \[\E\int_0^T|\overline{Y}_s^{x,\alpha} - \overline{Y}_s^{x}|^2\der s \underset{n \rightarrow + \infty}{\rightarrow} 0 \t{~~and~~}  \E|\overline{Y}_T^{x,\alpha} - \overline{Y}_T^{x}|^2 \underset{n \rightarrow + \infty}{\rightarrow} 0.\]

First the convergence of $\overline{v}^{\alpha_n}(V_s^x)$ toward $\overline{v}(V_s^x)$ is clear. Secondly, we have 
\begin{align*}
|{v}^{\alpha_n}(V_s^x)| \leq C(1+|V_s^x|^2) .
\end{align*}
Therefore the dominated convergence theorem can be applied to show that:
\[\E\int_0^T|\overline{Y}_s^{x,\alpha,\beta(\varepsilon_n)} - \overline{Y}_s^{x,\alpha}|^2\der s \underset{n \rightarrow + \infty}{\rightarrow} 0 \t{~~and~~}  \E|\overline{Y}_T^{x,\alpha,\beta(\varepsilon_n)} - \overline{Y}_T^{x,\alpha}|^2 \underset{n \rightarrow + \infty}{\rightarrow} 0.\]
Then, just as before, it is possible to show that $(Z^{x,\alpha_n})_{\alpha_n}$ is Cauchy in $\M$. We denote its limit by $\overline{Z}_s^x$.

The end of the proof is very classical, it suffices to apply BDG's inequality to show that $\E \sup_{0 \leq t \leq T}|\overline{Y}^x|^2 < + \infty$ , $\forall T > 0$.
To show that $\overline{Z}^x$ is Markovian, just apply the same method as in the proof of Theorem $4.4$ in \cite{FUHRMAN_HU_TESSITORE_ERGODIC_BSDE}.
\end{proof}
\begin{rema}
It is clear that we do not have uniqueness of the solutions of EBSDE (\ref{EBSDE général avec f}) because if $(Y,Z,\lambda)$ is a solution then $(Y + \theta,Z,\lambda)$ is another solution, for all $\theta \in \R$. However we have a uniqueness property for $\lambda$ under the following polynomial growth property: 
\begin{align*}
|Y_t^x| \leq C(1+|V_t^x|^2).
\end{align*}
One can notice that the solution $\overline{Y}_t^x = \overline{v}(V_t^x)$ constructed in the proof of Theorem (\ref{théorème existence EDSRE cas général avec f dissipatif}) satisfies such a growth property.
\end{rema}
\begin{théo}
(Uniqueness of $\lambda$). 
Assume that the hypotheses (\ref{hypo f weakly dissipative, sigma invertible}) and (\ref{hypo psi lipschitz en x,z et borné en x}) hold true. Let us suppose that we have two solutions of EBSDE (\ref{EBSDE with zero neumann condition}) denoted by $(Y,Z,\lambda)$ and $(Y',Z',\lambda')$ where $Y$ and $Y'$ are progressively measurable continuous processes, $Z$ and $Z'$ $\in \M$ and $\lambda$, $\lambda' \in \R$. Finally assume that the following growth properties hold:
\begin{align*}
&|Y_t| \leq C(1+|V_t^x|^2)\\
&|{Y}_t'| \leq C'(1+|V_t^x|^2).
\end{align*}
Then $\lambda = \lambda'$.
\end{théo}
\begin{proof}
It suffices to adapt the proof of Theorem 4.6 of \cite{FUHRMAN_HU_TESSITORE_ERGODIC_BSDE}. With the same notations one can write: 
\begin{align*}
\widetilde{\lambda} &= T^{-1} \E^{\Pb_h}[\widetilde{Y}_T - \widetilde{Y}_0] \\
&\leq T^{-1}\E^{\Pb_h}((C+C')(1+|V_T^x|^2)) + T^{-1}\E^{\Pb_h}((C+C')(1+|x|^2)). 
\end{align*}
To conclude, just use the estimates from Lemma \ref{lemm estimates dissipativity}, and let $T \rightarrow + \infty$.
\end{proof}


\section{The ergodic BSDE with zero and non-zero Neumann\\ boundary conditions in a weakly dissipative environment}
In this section we replace the process $(V_t^x)_{t \geq 0}$ by the process $(X_t^{x})_{t \geq 0}$,
which is solution of  a stochastic differential equation reflected in the closure of an open convex subset $G$ of $\R^d$ with regular boundary, namely, we consider the  following stochastic equation for a pair of unknown processes $(X_t^x,K_t^x)_{t \geq 0}$ such that, for every $x \in \overline{G}$, $t \geq 0$ :

\begin{equation}\label{SDE reflected}
  \left\{
      \begin{aligned}
&X_t^x = x + \int_0^t f(X_s^x)\der s + \int_0^t \sigma(X_s) \der s + \int_0^t \nabla \phi (X_s^x)\der K_s^x,\\
&K_t^x = \int_0^t \mathds{1}_{\{X_s^x \in \partial G \}} \der K_s^x,
      \end{aligned}
    \right.
\end{equation}
where $f$ is weakly dissipative.

As far as we know, there is no result regarding such diffusions. That is why it is necessary to adapt a result of Menaldi in \cite{MENALDI_STOCHASTIC_VARIATIONNAL_INEQUALITY_FOR_REFLECTED_DIFFUSION} where an existence and uniqueness result is stated by a penalization method for a diffusion reflected in a convex and bounded set under Lipschitz assumptions for the drift. Therefore, it is necessary to adapt this result in our framework, namely when the set is not bounded anymore but with weakly dissipative assumptions for the drift.

We denote by $\Pi(x)$ the projection of $x \in \R^d$ on $\overline{G}$. Let us denote by $(X_t^{x,n})_{t \geq 0}$ the unique strong solution of the following penalized problem associated to the reflected problem (\ref{SDE reflected}) :
\begin{equation}\label{SDE pénalisée}
X_t^{x,n} = x + \int_0^t (d + F_n +b)(X_s^{x,n})\der s + \int_0^t \sigma(X_s^{x,n}) \der W_s,
\end{equation}
where $\forall x \in \R^d, ~~ F_n(x) = -2n(x-\Pi(x)). $

\begin{rema}\label{remarque ajout F_n}
The  functions $d+F_n+b$ and $\sigma$ satisfy Hypothesis \ref{hypo f weakly dissipative, sigma invertible}. Indeed, from \cite{GEGOUT-PETIT_PARDOUX_EDSR_REFLECHIES_DANS_UN_CONVEXE}, $F_n$ is $0$-dissipative therefore $d+F_n$ remains $\eta$-dissipative thus the estimate of Lemma \ref{lemm estimates on semi groupe kolmogorov} holds with constants which do not depend on $n$. Furthermore one can remark that for all $\xi \in \R^d$, $\transpose \xi  \nabla F_n(x) \xi \leq 0$, for all $x \in \R^d$ (see for example \cite{GEGOUT-PETIT_PARDOUX_EDSR_REFLECHIES_DANS_UN_CONVEXE}). Finally, taking $a \in \G$ (thus $F_n(a) = 0$) in Remark \ref{rema dissipativity implies previous conditions} shows us that the estimate of Lemma \ref{lemm estimates dissipativity} holds with constants that do not depend on $n$. 
\end{rema}

We need the following assumptions on $G$: 
\begin{hypo}\label{hypo G convexe}
$G$ is an open convex set of $\R^d$.
\end{hypo}

\begin{hypo}\label{hypo bord de G}
There exists a function $\phi \in \mathscr{C}_b^2(\mathbb{R}^d)$ such that $G = \{\phi > 0\}$, $\partial G = \{\phi = 0 \}$ and $|\nabla \phi (x) | = 1$, $\forall x \in \partial G$.
\end{hypo}

The following Lemma states that the penalized process is Cauchy in the space of predictable continuous process for the norm $\E \sup_{0 \leq t \leq T} | \cdot |^p$, for every $ p > 2$ and that it converges to the reflected process solution of (\ref{SDE reflected}) for a  process $K^x$ with bounded variations.
\begin{lemm}\label{lemm convergence processus prenalise}
Assume that the hypotheses (\ref{hypo f weakly dissipative, sigma invertible}), (\ref{hypo G convexe}) and (\ref{hypo bord de G}) hold true. Then for every $x \in \overline{G}$, there exists a unique pair of processes $\left\{(X_t^x,K_t^x)_{t \geq 0} \right\}  $ with values in $(\G \times \R_+)$ and which belong to the space $\mathscr{S}^p \times \mathscr{S}^p$, $\forall 1 \leq p < + \infty$, satisfying (\ref{SDE reflected}) and such that 
\begin{align*}
\eta_t^x := \int_0^t \nabla \phi(X_s^x) \der K_s^x ~~~~~\text{has bounded variation on $[0,T]$, $0 < T < \infty$, $\eta^x_0 =0$}
\end{align*}
and for all process $z$ continuous and progressively measurable taking values in the closure $\overline{G}$ we have
\begin{align*}
\int_0^T (X_s^x-z_s) \der \eta^x_s \leq 0, ~~~~~\forall T > 0.
\end{align*}
Finally the following estimate hold for the convergence of the penalized process, for any $1 < q < p/2$,  for any $T \geq 0$ there exists $C>0$ such that
\begin{align*}
\E \sup_{0 \leq t \leq T} |X_t^{x,n}-X_t^x|^p \leq C\left(\frac{1}{n^q}\right),
\end{align*}
\end{lemm}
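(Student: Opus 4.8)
The plan is to follow the penalization scheme of \cite{MENALDI_STOCHASTIC_VARIATIONNAL_INEQUALITY_FOR_REFLECTED_DIFFUSION}, replacing every argument that relies on boundedness of the domain and on global Lipschitz bounds by arguments that use only the dissipativity of $d$ and the convexity of $G$. The penalized processes $X^{x,n}$ solving (\ref{SDE p�nalis�e}) are already known to exist as unique strong solutions; moreover, by Remark \ref{remarque ajout F_n} the coefficient $d+F_n+b$ satisfies the hypotheses of Lemma \ref{lemm estimates dissipativity} with constants independent of $n$, so that $\sup_n \E\sup_{0\le t\le T}|X_t^{x,n}|^p <+\infty$ for every $p\ge 1$ and $T>0$. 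I stress at once the difficulty: $F_n$ is globally Lipschitz only with constant of order $n$, so a naive use of It\^o's formula followed by Gronwall's lemma would produce useless constants of the form $e^{CnT}$. The whole point is therefore to exploit the one-sided (monotone) structure of $F_n$, which is uniform in $n$.

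The heart of the a priori analysis consists of two estimates on the excursions of $X^{x,n}$ outside $\G$, both uniform in $n$. Fix an interior point $a\in G$; applying It\^o's formula to $|X_t^{x,n}-a|^2$ and using the defining inequality of the projection, $(X_s^{x,n}-\Pi(X_s^{x,n}),a-\Pi(X_s^{x,n}))\le 0$, one gets $(X_s^{x,n}-a,F_n(X_s^{x,n}))\le -2n\,\mathrm{dist}(X_s^{x,n},\G)^2$, and, after controlling the remaining drift by the dissipativity of $d$ and the Lipschitz and boundedness properties of $b,\sigma$,
\[
n\,\E\int_0^T \mathrm{dist}(X_s^{x,n},\G)^2\,\der s \le C .
\]
A second application of It\^o's formula, to $\phi(X_t^{x,n})$ (recall $\phi\in\mathscr{C}_b^2$ and $|\nabla\phi|=1$ on $\partial G$ by Hypothesis \ref{hypo bord de G}), exploits the fact that the penalization pushes $\phi$ upward to yield the uniform bound on the variation of the penalizing term,
\[
\E\int_0^T \left|F_n(X_s^{x,n})\right|\der s = 2n\,\E\int_0^T \mathrm{dist}(X_s^{x,n},\G)\,\der s \le C ,
\]
together with a rate for the depth of the excursions, $\E\sup_{0\le t\le T}\mathrm{dist}(X_t^{x,n},\G)^p \le C\,n^{-r}$ for a suitable $r>0$ (the supremum being handled by the Burkholder--Davis--Gundy inequality).

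With these estimates the convergence is proved by showing that $(X^{x,n})_n$ is Cauchy in $\mathscr{S}^p$. I apply It\^o's formula to $|X_t^{x,n}-X_t^{x,m}|^p$. The difference of the genuine drifts is treated as usual by the $\eta$-dissipativity of $d$ and the Lipschitz continuity of $b$ and $\sigma$. The delicate term is the penalization interaction $(X_s^{x,n}-X_s^{x,m},F_n(X_s^{x,n})-F_m(X_s^{x,m}))$. Writing $F_n(x)=-2n(x-\Pi(x))$ and inserting the projections, this term splits into: the diagonal contributions $-2n\,\mathrm{dist}(X_s^{x,n},\G)^2$ and $-2m\,\mathrm{dist}(X_s^{x,m},\G)^2$, which have the right sign; a projection--projection cross term that is $\le 0$ by monotonicity of $\Pi$ (that is, $(X^{x,n}-\Pi(X^{x,n}),\Pi(X^{x,n})-\Pi(X^{x,m}))\ge 0$ and its symmetric counterpart); and two genuinely mixed terms bounded, up to a constant, by $\mathrm{dist}(X_s^{x,m},\G)\,|F_n(X_s^{x,n})|$ and $\mathrm{dist}(X_s^{x,n},\G)\,|F_m(X_s^{x,m})|$. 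It is precisely here that the two a priori estimates combine: integrating in time and applying the Cauchy--Schwarz inequality against the uniform variation bound controls the mixed terms by $C\,(\E\sup_t\mathrm{dist}(X_t^{x,m},\G)^2)^{1/2}$ and its symmetric analogue, hence by $C(n^{-q}+m^{-q})$ after optimizing the exponents. This is the main obstacle of the proof, and the constraint $1<q<p/2$ emerges from balancing the power $p$ against the square roots produced by Cauchy--Schwarz and the Burkholder--Davis--Gundy inequality. Letting $m\to+\infty$ in the resulting Cauchy estimate both produces the limit $X^x$ and yields the announced rate $\E\sup_{0\le t\le T}|X_t^{x,n}-X_t^x|^p\le C\,n^{-q}$.

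It remains to identify the limit. Since $\mathrm{dist}(X_t^{x,n},\G)\to 0$ uniformly on $[0,T]$, the process $X^x$ takes values in $\G$. From (\ref{SDE p�nalis�e}) the penalizing integrals $\int_0^\cdot F_n(X_s^{x,n})\,\der s$ converge in $\mathscr{S}^p$ to $\eta_\cdot^x := X_\cdot^x - x - \int_0^\cdot (d+b)(X_s^x)\der s - \int_0^\cdot \sigma(X_s^x)\der W_s$; by lower semicontinuity of the total variation and the uniform bound of the previous step, $\eta^x$ has bounded variation on $[0,T]$, and one recovers the nondecreasing scalar process $K^x$ and the representation $\eta^x=\int_0^\cdot\nabla\phi(X_s^x)\der K_s^x$ as in \cite{GEGOUT-PETIT_PARDOUX_EDSR_REFLECHIES_DANS_UN_CONVEXE}. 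The Skorokhod inequality is obtained by passing to the limit in the pointwise inequality $(X_s^{x,n}-z_s,F_n(X_s^{x,n}))\le 0$, valid for every $z_s\in\G$ by the projection property, which gives $\int_0^T(X_s^x-z_s)\,\der\eta_s^x\le 0$. Finally, uniqueness follows by applying It\^o's formula to $|X_t^x-\widetilde X_t^x|^2$ for two solutions: the dissipativity of $d$, together with the Skorokhod inequalities used with $z=\widetilde X^x$ and $z=X^x$ (both admissible since the two processes live in $\G$), makes the reflection contribution nonpositive, and Gronwall's lemma concludes.
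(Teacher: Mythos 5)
Your proposal follows essentially the same route as the paper's proof: uniform (in $n$) bounds on the total variation $n\int_0^T|X^{x,n}_s-\Pi(X^{x,n}_s)|\,\der s$ and on the decay $\E\sup_t\mathrm{dist}(X_t^{x,n},\G)^p\le Cn^{-(p-2)/2}$, the mixed-term estimate obtained by H\"older from these two, the monotonicity decomposition of the penalization interaction (your splitting via $\Pi$ is exactly the paper's inequality $(x'-x,\beta(x))\le(\beta(x'),\beta(x))$), the Cauchy argument in $\mathscr{S}^p$, and the identification of the limit through the variational inequality and the lemmas of G\'egout-Petit--Pardoux. The only deviation is that you extract the variation bound from It\^o applied to $\phi$ (which requires $(\nabla\phi(x),x-\Pi(x))\le -c\,\mathrm{dist}(x,\G)$ off $\G$, only guaranteed near the boundary) whereas the paper uses the global convexity property $(x-c,\beta(x))\ge\gamma|\beta(x)|$ for an interior point $c$; this is a minor, fixable difference.
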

\begin{proof}
The proof is given in Appendix.
\end{proof}

\subsection{The ergodic BSDE with zero Neumann boundary conditions in a weakly dissipative environment}
In a first time we are concerned with the following EBSDE with zero Neumann condition in infinite horizon:
\begin{equation}\label{EBSDE with zero neumann condition}
Y_t^{x} = Y_T^{x} + \int_t^T[ \psi(X_s^{x},Z_s^{x})- \lambda] \der s - \int_t^T Z_s^x \der W_s, ~ \forall 0 \leq t \leq T < +\infty,
\end{equation}
where the unknown is the triplet $(Y_\cdot^x,Z_\cdot^x,\lambda)$. $(X_t^x)_{t \geq 0}$  is the solution of (\ref{SDE reflected}).

To study the problem of existence of a solution to such an equation, we are going to study the following BSDE, with monotonic drift in $y$, regularized coefficients and penalized generator, namely: $\forall 0 \leq t \leq T < +\infty$,
\begin{equation}\label{BSDE alpha approchant l'EBSDE avec générateur réfléchi}
Y_t^{x,\alpha,n,\varepsilon} = Y_T^{x,\alpha,n,\varepsilon} + \int_t^T[ \psi^{\varepsilon}(X_s^{x,n,\varepsilon},Z_s^{x,\alpha,n,\varepsilon})- \alpha Y_s^{x,\alpha,n,\varepsilon}] \der s - \int_t^T Z_s^{x,\alpha,n,\varepsilon} \der W_s,
\end{equation}
where the process $(X_t^{x,n,\varepsilon})$ is the solution of the following SDE:
\[X_t^{x,n,\varepsilon} = x + \int_0^t (f^{\varepsilon}(X_s^{x,n,\varepsilon}) + F_n^{\varepsilon}(X_s^{x,n,\varepsilon}))\der s + \int_0^t \sigma(X_s^{x,n}) \der W_s. \]

\begin{rema}
$F_n$ is regularized like other regularized functions. Thanks to convolutions arguments it is possible to construct a sequence of functions $F_n^{\varepsilon}$ which converges pointwisely toward $F_n$ and such that for all $\varepsilon$, $F_n^{\varepsilon}$ is $0$-dissipative and  $4n$-Lipschitz.
\end{rema}

Now we can state the existence theorem for EBSDE (\ref{EBSDE with zero neumann condition}).

\begin{théo}\label{thm existence avec mu nul}
Assume that the hypotheses \ref{hypo f weakly dissipative, sigma invertible}, \ref{hypo psi lipschitz en x,z et borné en x}, \ref{hypo G convexe} and \ref{hypo bord de G} hold. Then there exists a solution $(\overline{Y}_t^{x},\overline{Z}_t^{x},\overline{\lambda})$ to EBSDE (\ref{EBSDE with zero neumann condition}) such that $\overline{Y}_\cdot^x=\overline{v}(V_\cdot^x)$ with $\overline{v}$ locally Lipschitz, and there exists a measurable function $\overline{\xi} : \R^{d} \rightarrow \R^{1 \times d}$ such that $\overline{Z}^x \in \M$ and $\overline{Z}_\cdot^x = \overline{\xi}(X_\cdot^x)$.
\end{théo}

\begin{proof}
We give the main ideas, because the proof is very similar to the proof of Theorem \ref{théorème existence EDSRE cas général avec f dissipatif}. The beginning of the proof is the same as the proof of Theorem \ref{théorème existence EDSRE cas général avec f dissipatif}. Lemma \ref{lemm existence unicité BSDE monotone alpha} gives us the existence and uniqueness of the solution $(Y^{x,\alpha,n,\varepsilon},Z^{x,\alpha,n,\varepsilon})$ of BSDE (\ref{BSDE alpha approchant l'EBSDE avec générateur réfléchi}) in $\SSS \times \M$. Then, as the function $d + F_n$ is still $\eta$-dissipative and as the work in the previous section involves $d$ only through its dissipativity constant $\eta$, we can apply previous results. As always we define $v^{\alpha,n,\varepsilon}(x) := Y_0^{\alpha, n , \varepsilon}$. By Lemma \ref{Lemm estimates on v^alpha} we have the following estimate: $\forall x, x' \in \R^d$:
\begin{align*}
|v^{\alpha,n,\varepsilon}(x) - v^{\alpha,n,\varepsilon}(x')| \leq C(1+|x|^2 + |x'|^2)|x-x'|.
\end{align*}
In addition we also have
\begin{align*}
|\alpha v^{\alpha,n,\varepsilon}(0)| \leq M_\psi.
\end{align*}

As those inequalities are uniform in $\varepsilon$ it is possible to construct by a diagonal procedure a subsequence $\varepsilon_p \rightarrow + 0$ such that  $\forall n \in \N  ,\alpha > 0$:
\[ \overline{v}^{\alpha,n,\varepsilon_p}(x) \underset{p \rightarrow + \infty}{\longrightarrow} \overline{v}^{\alpha,n}(x)    \t{~~~and~~~} \alpha v^{\alpha,n,\varepsilon_p(0)} \underset{p \rightarrow + \infty}{\longrightarrow} \overline{\lambda}^{\alpha,n},\]
We recall the fact that the function $\overline{v}^{\alpha,n}$ is locally Lipschitz on $\R^d$ and that we keep the following estimates:
\[|\overline{v}^{\alpha,n}(x)-\overline{v}^{\alpha,n}(x')| \leq C(1+|x|^2+|x'|^2)|x-x'|;\]
\[|\overline{\lambda}^{\alpha,n}| \leq M_\psi.\]

Now let us define $\forall t \geq 0$, $\overline{Y}_t^{x,\alpha,n} := \overline{v}^{\alpha,n}(V_t^{x,n})$. Let us show that 
 \[\E\int_0^T|\overline{Y}_s^{x,\alpha,n,\varepsilon_p} - \overline{Y}_s^{x,\alpha,n}|^2\der s \underset{p \rightarrow + \infty}{\rightarrow} 0 \t{~~and~~}  \E|\overline{Y}_T^{x,\alpha,n,\varepsilon_p} - \overline{Y}_T^{x,\alpha,n}|^2 \underset{p \rightarrow + \infty}{\rightarrow} 0.\]
First we write: 
\begin{align*}
|\overline{v}^{\alpha,n,\varepsilon_p}(X_s^{x,n,\varepsilon_p}) - \overline{v}^{\alpha,n}(X_s^{x,n})| &\leq |\overline{v}^{\alpha,n,\varepsilon_p}(X_s^{x,n,\varepsilon_p}) - \overline{v}^{\alpha,n,\varepsilon_p}(X_s^{x,n})|\\
& ~~~~~~~~~~ + |\overline{v}^{\alpha,n,\varepsilon_p}(X_s^{x,n})-\overline{v}^{\alpha,n}(X_s^{x,n})|\\
& \leq C(1+|X_s^{x,n,\varepsilon_p}|^2+|X_s^{x,n}|^2)|X_s^{x,n,\varepsilon_p}-X_s^{x,n}|,
\end{align*}
which shows the pointwise convergence of $\overline{v}^{\alpha,n,\varepsilon_p}(V_s^{x,n,\varepsilon_p})$ toward $\overline{v}^{\alpha,n}(V_s^{x,n})$ almost surely when $p \rightarrow + \infty$.
Then, due to the fact that $|\overline{v}^{\alpha,\beta(\varepsilon_n)}(V_s^{x,\beta(\varepsilon_n)})| \leq M_\psi/\alpha$ $\Pb$-a.s., we can apply the dominated convergence theorem to show that:
\[\E\int_0^T|\overline{Y}_s^{x,\alpha,n,\varepsilon_p} - \overline{Y}_s^{x,\alpha,n}|^2\der s \underset{p \rightarrow + \infty}{\rightarrow} 0 \t{~~and~~}  \E|\overline{Y}_T^{x,\alpha,n,\varepsilon_p} - \overline{Y}_T^{x,\alpha,n}|^2 \underset{p \rightarrow + \infty}{\rightarrow} 0.\]
In addition it is possible to show as in Theorem \ref{théorème existence EDSRE cas général avec f dissipatif} that $(Z^{x,\alpha,n,\varepsilon_p})_p$ is Cauchy in $\M$.

Note that we keep the estimates $\forall x, x' \in \R^d$:
\begin{align*}
|\overline{v}^{\alpha,n}(x) - \overline{v}^{\alpha,n}(x')| \leq C(1+|x|^2 + |x'|^2)|x-x'|,
\end{align*}
and
\begin{align*}
|\overline{\lambda}^{\alpha,n}| \leq M_\psi.
\end{align*}

Therefore, again, by a diagonal procedure, it is possible to extract a subsequence $(\beta(n))_n$ such that 
\begin{align*}
v^{\alpha,\beta(n)}(x) \rightarrow \overline{v}^{\alpha}(x).\\
\end{align*}
And thanks to Lemma \ref{lemm convergence processus prenalise}, one can apply the dominated convergence theorem to show that:
\[\E\int_0^T|\overline{Y}_s^{x,\alpha,\beta(n)} - \overline{Y}_s^{x,\alpha}|^2\der s \underset{n \rightarrow + \infty}{\rightarrow} 0 \t{~~and~~}  \E|\overline{Y}_T^{x,\alpha,\beta(n)} - \overline{Y}_T^{x,\alpha}|^2 \underset{n \rightarrow + \infty}{\rightarrow} 0.\]

Finally a last diagonal procedure in $\alpha$ allow us to conclude (see the end of the proof of Theorem \ref{théorème existence EDSRE cas général avec f dissipatif}).

\end{proof}


Once again, we notice that the solution we have constructed satisfies the following growth property:
\begin{align*}
|\overline{Y}_t^x| \leq C(1+|X_t^x|^2),
\end{align*}
so it is natural to establish the following theorem under the same growth properties.

\begin{théo}\label{théorème unicité lambda condition de neuman nulle}
(Uniqueness of $\lambda$). Assume that the hypotheses \ref{hypo f weakly dissipative, sigma invertible} and \ref{hypo psi lipschitz en x,z et borné en x} hold true. Let $(Y,Z,\lambda)$ be a solution of EBSDE (\ref{EBSDE with zero neumann condition}). Then $\lambda$ is unique among solutions $(Y,Z,\lambda)$ such that $Y$ is a bounded continuous process and $Z \in \M$. Finally assume that we have the following growth property 
\begin{align*}
&|Y_t| \leq C(1+|X_t^x|^2),\\
&|Y'_t| \leq C'(1+|X_t^x|^2).
\end{align*}
Then $\lambda = \lambda'$.
\end{théo}
\begin{proof}
Simply, adapt the proof of Theorem 4.6 of \cite{FUHRMAN_HU_TESSITORE_ERGODIC_BSDE}. With the same notations we can write: 
\begin{align*}
\widetilde{\lambda} &= T^{-1} \E^{\Pb_h}[\widetilde{Y}_T - \widetilde{Y}_0] \\
& \leq (C+C')T^{-1}(2+|x|^2 + \E^{\Pb_h}|X_T^x|^2)\\
& \leq (C+C')T^{-1}(2+|x|^2 + \E^{\Pb_h}|X_T^{x,n}|^2 + \E^{\Pb_h}|X_T^x - X_T^{x,n}|^2 )
\end{align*}
To conclude, just use the first estimate from Lemma \ref{lemm estimates dissipativity}, the estimate from Lemma \ref{lemm convergence processus prenalise} and let $T \rightarrow + \infty$.
\end{proof}

\subsection{The ergodic BSDE with non-zero Neumann boundary conditions in a weakly dissipative environment}
We are now concerned by the following EBSDE in infinite horizon:
\begin{equation}\label{EBSDE with non-zero neumann condition}
Y_t^{x} = Y_T^{x} + \int_t^T[ \psi(X_s^{x},Z_s^{x})- \lambda] \der s + \int_t^T[g(X_s^x)-\mu] \der K_s^x - \int_t^T Z_s^x \der W_s, ~ \forall 0 \leq t \leq T < +\infty,
\end{equation}
where $g: \R^d \rightarrow \R ~~\t{is measurable}$ and such that the term 
$\int_t^T[g(X_s^x)-\mu] \der K_s^x$ is well defined for all $0\leq t \leq T < +\infty $.

\begin{prop}\label{théorème existence sol (Y,Z,lambda)}
(Existence of a Solution $(Y,Z,\lambda)$). Assume that  the hypothesis \ref{hypo f weakly dissipative, sigma invertible}, \ref{hypo psi lipschitz en x,z et borné en x} and \ref{hypo bord de G} hold true. Then for any $\mu \in \R$ there exists $\lambda \in \R$, $Y^x$ continuous adapted process  and $Z^x \in \M$  such that the triple $(Y,Z,\lambda)$ is a solution of EBSDE (\ref{EBSDE with non-zero neumann condition}).
\end{prop}
\begin{proof}
The Theorem \ref{thm existence avec mu nul} gives us the existence of a solution $(Y^x,Z^x,\lambda)$ of the following EBSDE 
 \begin{equation}
Y_t^{x} = Y_T^{x} + \int_t^T[ \psi(X_s^{x},Z_s^{x})- \lambda] \der s - \int_t^T Z_s^x \der W_s, ~~~~ \forall 0 \leq t \leq T < +\infty.
\end{equation}
Now, defining $\widehat{Y}_t^{x} = Y_t^x - \int_0^t [g(X_s^x) - \mu] \der K_s^x $, it is easy to see that $(\widehat{Y}^x,Z^x,\lambda)$ is a solution of the EBSDE (\ref{EBSDE with non-zero neumann condition}) with $\mu$ fixed. 
\end{proof}
\begin{rema}
The constructed solution $\widehat{Y}^{x}$ is not Markovian anymore. Furthermore, it satisfies the following growth property: $\forall t \geq 0$,
$|\widehat{Y}_t^x| \leq C(1+ |X_t^x|^2 +K_t^x)$. This dependence on $K_t^x$ prevents us to get the uniqueness of $\lambda$ among the space of solutions satisfying such a growth property.
\end{rema}

Similarly, for every $\lambda \in \R$, an existence result can be stated for a solution $(Y,Z,\mu)$.
\begin{prop}\label{théorème existence sol (Y,Z,mu)}
(Existence of a Solution $(Y,Z,\mu)$). Assume that the hypotheses \ref{hypo f weakly dissipative, sigma invertible}, \ref{hypo psi lipschitz en x,z et borné en x} and \ref{hypo bord de G} hold true. Then for any $\lambda \in \R$ there exists a continuous adapted process $Y$  and $Z^x \in \M$  such that for all $\mu \in \R$ the triple $(Y,Z,\mu)$ is a solution of EBSDE (\ref{EBSDE with non-zero neumann condition}).
\end{prop}
\begin{proof}
From Theorem \ref{thm existence avec mu nul}, we have constructed  a solution $(Y^{x,0},Z^{x,0},\lambda^0)$ of the following EBSDE 
  \begin{equation}
Y_t^{x,0} = Y_T^{x,0} + \int_t^T[ \psi(X_s^{x},Z_s^{x,0})- \lambda^0] \der s - \int_t^T Z_s^{x,0} \der W_s, ~~~~ \forall 0 \leq t \leq T < +\infty.
\end{equation}
Then setting $\widehat{Y}_t^x := Y_t^{x,0} + (\lambda-\lambda^0)t - \int_0^t [g(X_s^x)-\mu]\der K_s^x$, the triple $(\widehat{Y}^x,Z^{x,0},\mu)$ is solution of  EBSDE (\ref{EBSDE with non-zero neumann condition}).
\end{proof}

\begin{rema}
The constructed solution satisfies the following growth property:
\begin{align*}
|\widehat{Y}_t| \leq C(1+|X_t^x|^2+K_t^x+t), \mathbb{P}\text{-a.s}.
\end{align*}
Again, this solution does not allow us to establish a result of uniqueness for $\mu$ among the space of solutions satisfying such a growth property.
\end{rema}

\begin{rema}
If the convex $\G$ is assumed to be bounded, it is possible, following \cite{Richou_Ergodic_BSDE_PDe_Neumann} to show that there exists a Markovian  solution $(Y,Z,\lambda)$ when $\mu$ is fixed or $(Y,Z,\mu)$ when $\lambda$ is fixed exists, for a driver weakly dissipative. The proofs are the same as in \cite{Richou_Ergodic_BSDE_PDe_Neumann}.
\end{rema}


\subsection{Probabilistic interpretation of the solution of an elliptic PDE with zero Neumann boundary condition}
We are concerned with the following semi-linear elliptic PDE:
\begin{align}\label{PDE avec condition de Neumann au bord}
\left\{ \begin{array}{ll}
\LL v(x) + \psi(x, \nabla v(x)\sigma(x)) = \lambda, &x \in G, \\
\frac{\partial v}{\partial n}(x)  = 0, & x \in \partial G,
\end{array}\right.
\end{align}
where:
\begin{align*}
\LL u(x) = \frac{1}{2}\tr (\sigma(x) \transpose \sigma(x) \nabla^2u(x)) +   \nabla u(x) f(x).
\end{align*}
The unknowns of this equation is the couple $(v,\lambda)$. Now we show that the pair $(v,\lambda)$ defined in Theorem \ref{théorème existence sol (Y,Z,lambda)} is a viscosity solution of the PDE (\ref{PDE avec condition de Neumann au bord}).

\begin{théo}
Assume that hypotheses of Theorem \ref{thm existence avec mu nul} hold 
. Then $(v,\lambda)$ is a viscosity solution of the elliptic PDE (\ref{PDE avec condition de Neumann au bord}) where $(v,\lambda)$ is defined in Theorem \ref{thm existence avec mu nul}.
\end{théo}
\begin{proof}
Just adapt the proof of Theorem $4.3$ from \cite{PARDOUXPENGADAPTED}.
\end{proof}


\subsection{Optimal ergodic control}
We make the standard assumption for optimal ergodic control, namely we consider $U$ a separable metric space, which is the state space of the control process $\rho$. $\rho$ is assumed to be $(\mathscr{F}_t)$-progressively measurable. We introduce $R : U \rightarrow \R^d$ and $L : \R^d \times U \rightarrow \R$ two continuous functions such that , for some constants $M_R > 0$ and $M_L > 0$, $\forall u \in U, \forall x,x' \in \R^d$,
\begin{itemize}
\item $|R(u)| \leq M_R$,
\item $|L(x,u)| \leq M_L$,
\item $|L(x,u) - L(x',u)| \leq M_L|x-x'|$.
\end{itemize}

For an arbitrary control $\rho$, the cost will be evaluated relatively to the following Girsanov density:
\begin{align*}
\Gamma_T^{\rho} = \exp \left( \int_0^T R(\rho_s)\der W_s - \frac{1}{2}\int_0^T |R(\rho_s)|^2\der s \right).
\end{align*}
We denote by $\Pb_T^{\rho}$ the associated probability measure, namely: $\der \Pb_T^{\rho} = \Gamma_T^{\rho} \der \Pb$ on $\mathscr{F}_T$. Now we define the ergodic costs, relatively to a given control $\rho$ and a starting point $x \in \R^d$, by:
\begin{align}
I(x,\rho) = \limsup_{T \rightarrow + \infty} \frac{1}{T}\E_T^{\rho} \left[ \int_0^T L(X_s^x,\rho_s) \der s \right],
\end{align}

where $\E_T^{\rho}$ denotes expectation with respect to $\Pb_T^{\rho}$. We notice that the process $W_t^{\rho} := W_t - \int_0^t R(\rho_s) \der s$ is a Wiener process on $[0,T]$ under $\Pb_T^{\rho}$.
We define the Hamiltonian in the usual way:
\begin{align}\label{definition hamiltonien}
\psi(x,z) = \inf_{u \in U} \{ L(x,u) + zR(u) \}, ~~~~x \in R^{d}, ~~z\in \R^{1 \times d},
\end{align}
and we remark that if, for all $x$, $z$, the infimum is attained in (\ref{definition hamiltonien}) then, according to Theorem $4$ of \cite{FILIPPOV_IMPLICIT_FUNCTION_LEMMA}, there exists a measurable function $\gamma : \R^d \times \R^{1 \times d} \rightarrow U$ such that:
\begin{align}\label{hamilonian}
\psi(x,z) = L(x,\gamma(x,z)) + zR(\gamma(x,z)).
\end{align}
One can verify that $\gamma$ is a Lipchitz function. Now we can prove the following theorem, exactly like in \cite{Richou_Ergodic_BSDE_PDe_Neumann}.

\begin{théo}
Assume that the hypotheses of Theorem \ref{thm existence avec mu nul} hold true. Let $(Y,Z,\lambda)$ be a solution of EBSDE (\ref{EBSDE with non-zero neumann condition}) with $\mu$ fixed. Then:
\begin{enumerate}\vspace{-0.3 cm}
\item For arbitrary control $\rho$ we have $I(x,\rho) \geq \lambda$.
\item If $L(X_t^x,\rho_s) + Z_t^x R(\rho_t) = \psi (X_t^x,Z_t^x)$, $\Pb$-a.s. for almost every $t$ then  $I(x,\rho) = \lambda$.
\item If the infimum is attained in (\ref{hamilonian}) then the control $\overline{\rho}_t = \gamma (X_t^x,Z_t^x)$ verifies $I(x,\overline{\rho}) = \lambda$.
\end{enumerate}
\end{théo}

\begin{rema}
When the Neumann conditions are different from $0$, we need regularity on the solution $Y_t^x$ in order to state the same result. Again the degeneracy of the solution constructed in Proposition \ref{théorème existence sol (Y,Z,lambda)} or \ref{théorème existence sol (Y,Z,mu)} does not allow us to conclude.
\end{rema}


\section{Appendix}
\appendix
\section{Proof of Lemma \ref{lemm estimates dissipativity}}
Let us define $\varphi (x) = |x-a|^p$ for $p \geq 1$. We recall the following formulas for derivatives of $\varphi$, for $p \geq 2$. 
\[\nabla \varphi (x) = p(x-a)|x-a|^{p-2} .\]
\[
{\partial^2 \varphi (x)}/{\partial x_i \partial x_j} = 
\left \{
\begin{array}{ ll}
p|x-a|^{p-2} + p(p-2)(x_i - a_i)^2|x-a|^{p-4}  &~~\t{if}~~i=j,\\
p(p-2)(x_i-a_i)(x_j-a_j)|x-a|^{p-4} &~~\t{if}~~i \neq j.
\end{array}\right.
\]
 Therefore we have the following estimate 
 \begin{align}\label{estimée sur l hessienne}
  |\nabla^2 \varphi (x)| \leq K|x-a|^{p-2},
 \end{align}
for a constant $K$ which depends only on $p$ and $d$.
Under the hypothesis of this Lemma, it is well known that a unique strong solution for which the explosion time is almost surely equal to infinity exists (see \cite{MAO_SZPRUCH_SRONG_CONVERGENCE_NON_GLOBALLY} for example).
By Itô's formula we get, for $p =2$, for all $t \geq 0$,
\begin{align}\label{ito Vs - a au carre}
|V_t^x-a|^2e^{2 \eta_1 t} &= |x-a|^2 + 2\int_0^t e^{2 \eta_1 s}(V_s^x-a , f(V_s^x) \der s + \sigma(V_s^x) \der W_s)\nonumber\\
&~~~~~~ + 2\eta_1\int_0^t |V_s^x-a|^2 e^{2 \eta_1 s} \der s + \int_0^t \sum_{i}(\sigma(V_s^x) \transpose\sigma(V_s^x))_{i,i}e^{2 \eta_1 s}\mathrm{d}s\nonumber\\ 
&\leq |x-a|^2  + 2\int_0^t \transpose(V_s^x-a)\sigma(V_s^{x}) \der W_s + \frac{2\eta_2+d|\sigma|_\infty}{2 \eta_1} (e^{2 \eta_1 t}-1). 
\end{align}
Taking the expectation, we get:
\begin{align}\label{equation non factorise}
\E|V_t^x - a|^2 \leq |x-a|^2e^{-2\eta_1 t} + \frac{2\eta_2+ d|\sigma|_\infty^2 }{2\eta_1}(1-e^{-2\eta_1 t}).
\end{align}
Therefore:
\begin{equation}\label{estimée uniforme en temps pour p=2}
\E|V_t^x|^2 \leq  C(1+|x|^2e^{-2\eta_1 t}),
\end{equation}
where $C$ is a constant that depends only on $a$, $\eta_1$, $\eta_2$ and $\sigma$ but not on the time $t$.

Let $0 < \delta < p \eta_1$. For $p > 2$, Itô's formula gives us, for a generic constant $C$ which depends only on $p$, $d$, $|\sigma|_{\infty}$, $\eta_2$, $\varepsilon$ (defined later):
\begin{align*}
|V_t^x-a|^pe^{(p \eta_1 - \delta)t} &\leq  |x-a|^p + p\int_0^t e^{(p \eta_1 - \delta)s} |V_s^x-a|^{p-2}(V_s^x-a , f(V_s^x)\der s+\sigma(V_s^x) \der W_s)\\
&~~~~~~ + (p \eta_1 - \delta)\int_0^t |V_s^x-a|^p e^{(p\eta_1-\delta)} \der s\\
&~~~~~~+ \frac{1}{2}\int_0^t \tr (\sigma(V_s^x) \transpose \sigma(V_s^x)\nabla^2 \varphi(V_s^x))e^{(p\eta_1 - \delta)s} \der s.
\end{align*}
Then, taking the expectation, using the assumption on $f$ and using estimate (\ref{estimée sur l hessienne}) we have
\begin{align*}
\E|V_t^x-a|^pe^{(p \eta_1 - \delta)t} \leq  |x-a|^p &+ C\int_0^t \E|V_s^x-a|^{p-2} e^{(p\eta_1 - \delta)s}\der s \\
&- \delta \int_0^t \E|V_s^x-a|^{p} e^{(p \eta_1 - \delta)s} \der s.
\end{align*}
The Young inequality  $ab \leq  a^p/p + b^q/q$ for $1/p + 1/q = 1$ with $p$ replaced by $p/(p-2)$ and $q$ replaced by $p/2$ applied to the last term of the above inequality allows us to write:
\[
|V_s^{x}-a|^{p-2} \leq (p-2)\varepsilon|V_s^{x}-a|^p/p + 2/(p \varepsilon^{(p-2)/2}),
\]
hence,
\begin{align*}
\E|V_t^x-a|^pe^{(p \eta_1 - \delta)t} &\leq  |x-a|^p + \varepsilon C \int_0^t \E|V_s^x-a|^p e^{(p\eta_1 - \delta)s} \der s + C/\varepsilon^{(p-2)/2} \\
&~~~~~~~~~~~~~~~~~~~~~~~~~~~~~~~~~~~~~~ - \delta\int_0^t  \E|V_s^x-a|^{p} e^{(p \eta_1 - \delta)s} \der s.
\end{align*}
We choose  $\varepsilon = \delta/C$, then:
\begin{align*}
\E|V_t^x-a|^pe^{(p \eta_1 - \delta)t} \leq  |x-a|^p C.
\end{align*}
Therefore:
\[
\E|V_t^x-a|^p \leq  C(1+|x|^pe^{-(p\eta_1 - \delta)t}).
\]
This can be rewritten:
\[
\E|V_t^x|^p \leq  C(1+|x|^pe^{-(p\eta_1 - \delta)t}),
\]
where $C$ is a constant which depends on $p$, $d$, $\sigma_{\infty}$, $\eta_1$, $\eta_2$, $\varepsilon$ and $a$. Finally, note that this result holds for any $0 <\delta <p\eta_1$.

Now, let us come back to (\ref{ito Vs - a au carre}), we have, for all $r >1$, $2r = p$
\begin{align*}
\E [\sup_{0\leq t \leq T}|V_t^x|^p ] &\leq C\left(1+|x|^p + \E \sup_{0 \leq t \leq T}\left| \int_0^t \transpose (V_s^x-a)\sigma(V_s^{x}) \der W_s\right|^p\right)\\
&\leq C\left[1+|x|^p + \E \left(\left( \int_0^T |\transpose (V_s^x-a)\sigma(V_s^{x})|^2 \der s\right)^{r/2}\right)\right]
\end{align*}
by BDG's inequality. Now distinguish the cases $r/2 < 1$ or $r/2 \geq 1$, we readily get, for each $p>2$, 
\begin{align*}
\E [\sup_{0\leq t \leq T}|V_t^x|^p ] &\leq C(T)(1+|x|^p+|x|^r) \\
& \leq C(T)(1+|x|^p).
\end{align*}
Once this is established, one can readily extend this estimate to the case $p \geq 1$. Indeed, for $0< \alpha < 1$, we have, by Jensen's inequality
\begin{align*}
\E [\sup_{0\leq t \leq T}|V_t^x|^{p \alpha} ]  &\leq   \left(\E \left[\sup_{0\leq t \leq T}|V_t^x|^{p} \right]\right)^{\alpha} \\
& \leq C(T)(1+|x|^{p \alpha}).
\end{align*}
\qed

\section{Proof of Lemma \ref{lemm estimates on semi groupe kolmogorov}}

We adapt the proof of Theorem $2.4$ from \cite{Debu_Hu_Tess_weak_dissipative}. We give the full proof for reader convenience. In this proof, $\kappa_i$, $i=0,1...$ denotes a constant which depends only on $\eta$, $B$, $\sigma_\infty$ and the dimension $d$. There are three steps in this proof. In the first step, we show that the process $V^x$ enters a fixed ball quickly enough. In the second step, we construct a coupling of solutions starting from two different points in this ball and we show that the probability of the constructed solutions to be equal after a time $T$ (given in the proof) is positive. Iterating this argument in step $3$, we obtain the result.

\textbf{Step 1 :}
By Remark \ref{rema dissipativity implies previous conditions}, one can take $a=0$ in equation (\ref{equation non factorise}) and then:
\begin{align*}
\E|V_t^x|^2 \leq |x|^2e^{-\eta_1 t} + \kappa_1.
\end{align*}
By the Markov property : $ \forall k \in \N$,
\begin{align}\label{B1}
\E[|V_{(k+1) T}^x|^2 | \F_{k T}] \leq |V_{k T}^x|^2e^{-\eta_1 T} + \kappa_1.
\end{align}
Let us define for $R \geq 0$,
\begin{align*}
C_k = \{ |V_{k T}^x|^2 \geq R \}, ~~~~ B_k = \bigcap_{j=0}^k C_j.
\end{align*}
By Markov's inequality
\begin{align}\label{B2}
\Pb(C_k+1 | \F_{kT}) &\leq \frac{\E(|V^x_{(k+1)T} |^2 | \F_{kT})}{R} \nonumber \\
&\leq \frac{|V_{k T}^x|^2e^{-\eta_1 T}}{R}  + \frac{\kappa_1}{R}. 
\end{align}
Let us multiply (\ref{B1}) and (\ref{B2}) by $\mathds{1}_{B_k}$ and let us take the expectation to obtain, since $\mathds{1}_{B_{k+1}} \leq \mathds{1}_{B_k}$
\begin{align*}
\begin{pmatrix}
 \E( |V_{(k+1)T}^x|^2 \mathds{1}_{B_{k+1}} ) \\
 \Pb(B_{k+1}) 
\end{pmatrix}
\leq 
 A \begin{pmatrix}
 \E( |V_{kT}^x|^2 \mathds{1}_{B_{k}} ) \\
 \Pb(B_{k}) 
\end{pmatrix}
\end{align*}
where 
\begin{align*}
A = \begin{pmatrix}
e^{-\eta_1 T} & \kappa_1\\
\frac{e^{-\eta_1 T}}{R} & \frac{\kappa_1}{R}
\end{pmatrix}.
\end{align*}
The eigenvalues of $A$ are $0$ and $ e^{- \eta_1 T} + \frac{\kappa_1}{R}$. So for every $T >0$ we can pick $R$ large enough such that $\lambda_{T,R} := e^{- \eta_1 T} + \frac{\kappa_1}{R} <1$. 

We deduce that 
\begin{align*}
\Pb(B_k) \leq \kappa_2 \left( \lambda_{T,R} \right)^k(1+|x|^2).
\end{align*}
Defining,  
\begin{align*}
\tau = \inf \{kT ; |V_{kT}^x|^2 \leq R \},
\end{align*}
it follows that 
\begin{align*}
\Pb(\tau \geq kT) \leq \Pb(B_k) \leq \kappa_2 (\lambda_{T,R})^k (1+|x|^2). 
\end{align*}
Thus, if we pick $\alpha > 0$ small enough such that $e^{\alpha T} \lambda_{T,R} < 1$, we obtain:
\begin{align}\label{esperance exponentiel}
\E(e^{\alpha \tau}) \leq \kappa_2 (1+|x|^2) \sum_{k=0}^{+\infty} \left( e^{\alpha T} \lambda_{T,R} \right)^k \leq \kappa_3(1+|x|^2).
\end{align}
\textbf{Step 2.} In this step, we construct a coupling of processes starting respectively from $x,y \in B_R$, the ball of center $0$ and radius $R$, such that  the probability that they take the same value at time $T$ is strictly positive. We denote by $\mu^x$ the law of $V^x$ under $\Pb$ and by $\mu^y$ the law of $V^y$ under $\Pb$ on $[0,T]$. Let us define 
\begin{align*}
\widetilde{V}_t = V_t^{y} + Y_t^{x,y}
\end{align*}
where $Y_t^{x,y}$ is the solution of the following SDE, for all $t < T$,
\begin{equation}\label{eq Y}
  \left\{
      \begin{aligned}
       \der Y_t^{x,y} &= \left[d(V_t^y + Y_t^{x,y}) - d(V_t^y)-\sigma(V_t^y+Y_t^{x,y})\sigma^{-1}(V_t^y) \frac{L Y_t^{x,y}}{T-t}\right]\der t \\
       &~~~~+ \left[\sigma(V_t^y + Y_t^{x,y})- \sigma(V_t^y)\right]\der W_t,\\
        Y_0^{x,y} &= x-y,\\
      \end{aligned}
    \right.
\end{equation}
and where $L > 0$. We denote by $\widetilde{\mu}$ the law of $\widetilde{V}$ on $[0,T)$.
The process $\widetilde{V}_t$ satisfies for all $t < T$:
\begin{equation}\label{eq V tilde}
  \left\{
      \begin{aligned}
\der \widetilde{V}_t &= f(\widetilde{V}_t) \der t + \sigma(\widetilde{V}_t)\left[\der W_t + \left(\sigma^{-1}(\widetilde{V}_t)\left(b(V_t^y) - b(\widetilde{V}_t)\right) - \sigma^{-1}(V_t^y) \frac{LY_t^{x,y}}{T-t}\right)\der t\right].  \\
       &~~~~+ \left[\sigma(V_t^y + Y_t^{x,y})- \sigma(V_t^y)\right]\der W_t,\\
        \widetilde{V}_0 &= x,\\
      \end{aligned}
    \right.
\end{equation}
Since all the coefficients are locally Lipschitz, $Y_t^{x,y}$ (and thus $\widetilde{V}_t$) are well-defined continuous process for $t \leq T \wedge \zeta$ where $\zeta$ is the explosion time of $Y_t^{x,y}$; namely, $\zeta := \lim_{n \rightarrow + \infty} \zeta_n$ for 
\begin{align*}
\zeta_n := \inf \{ t \in [0,T) : |Y_t^{x,y}| \geq n \}
\end{align*}
where we set $\inf \{ \varnothing \} = T$. We define
\begin{align*}
h(t) = \sigma^{-1}(\widetilde{V}_t)\left(b(V_t^y) - b(\widetilde{V}_t)\right) - \sigma^{-1}(V_t^y) \frac{LY_t^{x,y}}{T-t}, ~~~~ t \leq T \wedge \zeta,
\end{align*}
\begin{align*}
\der \widetilde{W}_t = \der W_t + h(t) \der t,~~~~ t \leq T \wedge \zeta,
\end{align*}
and 
\begin{align*}
I_t= \int_0^t h(s) \der W_s, ~~~~ t \leq T \wedge \zeta.
\end{align*}
If $\zeta = T$ and
\begin{align*}
R_t := \exp\left( -I_t - \frac{1}{2} \langle I,I \rangle_t \right)
\end{align*}
is a uniformly integrable martingale for $t \in [0,T)$, then by the martingale convergence theorem, $R_T := \lim_{t \nearrow T} R_t$ exists and $(R_t)_{t \in [0,T]}$ is a martingale. In this case, by the Girsanov theorem $(\widetilde{W}_t)_{t \in [0,T]}$ is a standard Brownian motion under the probability $R_T \Pb$. 
Rewrite (\ref{eq Y}) as 
\begin{equation}\label{equation Y sous Q}
  \left\{
      \begin{aligned}
       \der Y_t^{x,y} &= \left[d(\widetilde{V}_t) - d(V_t^y)- \left(\sigma(\widetilde{V}_t)- \sigma(V_t^y)\right)(b(\widetilde{V}_t) - b(V_t^y))  \right]\der t - \frac{L Y_t^{x,y}}{T-t}\der t \\
       &~~~~ + \left[\sigma(\widetilde{V}_t)- \sigma(V_t^y)\right]\der \widetilde{W}_t,\\
        Y_0^{x,y} &= x-y,\\
      \end{aligned}
    \right.
\end{equation}

Now we would like to apply the Girsanov theorem. The following lemma is a direct adaptation of a result in \cite{WANG_HARNACK}. However we give the proof for completeness.
\begin{lemm}
Assume Hypothesis \ref{hypo f weakly dissipative, sigma invertible} hold true and let $x,y \in \R^d$ and $T>0$ be fixed. Then
\begin{enumerate}
\item There holds
\begin{align*}
\sup_{t \in [0,T), n \geq 1} \E R_{t \wedge \zeta_n} \log R_{t \wedge \zeta_n} < + \infty.
\end{align*}
Consequently,
\begin{align*}
R_{t \wedge \zeta} := \lim_{n \nearrow \infty} R_{t \wedge \zeta_n \wedge (T- 1/n)}, ~~~~ t \in [0,T], ~~~~~R_{T \wedge \zeta} := \lim_{s \nearrow T} R_{s \wedge \zeta}
\end{align*}
exist such that $(R_{s \wedge \zeta })_{s \in [0,T]}$ is a uniformly martingale.
\item Let $\Q = R_{T \wedge \zeta} \Pb$. Then $\Q(\zeta = T) = 1$ so that $\Q = R_T \Pb$.
\item $Y_T^{x,y} = 0, ~~~~ \Q \text{-a.s.}$
\end{enumerate}
\end{lemm}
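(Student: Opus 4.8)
The plan is to work throughout under the measure that turns $\widetilde W$ into a Brownian motion and to reduce the three assertions to a single quadratic estimate on $Y^{x,y}$ (abbreviated $Y$ below). First I would fix $n$ and localise at the bounded stopping time $\zeta_n\wedge(T-1/n)$, where $h$ is bounded, so that $R_{\cdot\wedge\zeta_n\wedge(T-1/n)}$ is a genuine $\Pb$-martingale; let $\Q_n$ denote the probability with this density, under which $\widetilde W$ is a Brownian motion up to the localising time. Using $\der W_t=\der\widetilde W_t-h(t)\,\der t$ one rewrites
\[\log R_{t\wedge\zeta_n}=-\int_0^{t\wedge\zeta_n}h(s)\,\der\widetilde W_s+\tfrac12\int_0^{t\wedge\zeta_n}|h(s)|^2\,\der s,\]
so that, taking $\Q_n$-expectations and using that the stochastic integral is a true martingale,
\[\E\!\left[R_{t\wedge\zeta_n}\log R_{t\wedge\zeta_n}\right]=\E^{\Q_n}\!\left[\log R_{t\wedge\zeta_n}\right]=\tfrac12\,\E^{\Q_n}\!\left[\int_0^{t\wedge\zeta_n}|h(s)|^2\,\der s\right].\]
Since $\sigma^{-1}$ and $b$ are bounded, $|h(s)|^2\le\kappa_0+\kappa_0|Y_s|^2/(T-s)^2$, so assertion (1) reduces to bounding $\E^{\Q_n}\!\big[\int_0^{t\wedge\zeta_n}|Y_s|^2(T-s)^{-2}\,\der s\big]$ uniformly in $n$ and $t<T$.

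The heart of the proof is therefore the decay of $g_n(t):=\E^{\Q_n}|Y_{t\wedge\zeta_n}|^2$. I would apply It\^o's formula to $|Y_t|^2$ along equation (\ref{equation Y sous Q}). Since $\widetilde V_t-V_t^y=Y_t$, the $\eta$-dissipativity of $d$ gives $2(Y_t,d(\widetilde V_t)-d(V_t^y))\le-2\eta|Y_t|^2$; the quadratic-variation correction is bounded by $\kappa_1|Y_t|^2$ because $\sigma$ is Lipschitz; the cross term involving $b$ is controlled by $\kappa_2|Y_t|$ using the bound $|(Y_t,\sigma(\widetilde V_t)-\sigma(V_t^y))|\le\Lambda|Y_t|$ of Hypothesis \ref{hypo f weakly dissipative, sigma invertible} together with boundedness of $b$; and the coupling drift contributes the crucial $-2L|Y_t|^2/(T-t)$. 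After a Young inequality $\kappa_2|Y_t|\le L|Y_t|^2/(T-t)+\kappa_3(T-t)/L$ and taking $\Q_n$-expectation (the martingale part vanishing), I obtain
\[g_n'(t)\le(\kappa_1-2\eta)\,g_n(t)-\frac{L}{T-t}\,g_n(t)+\frac{\kappa_3(T-t)}{L}.\]
Multiplying by the integrating factor $e^{(2\eta-\kappa_1)t}\big(T/(T-t)\big)^{L}$ and integrating yields $g_n(t)\le\kappa_4\big((T-t)^{L}+(T-t)^{2}\big)$, uniformly in $n$. Taking for $L$ the constant $\lambda$ furnished by the last item of Hypothesis \ref{hypo f weakly dissipative, sigma invertible}, the inequality $2(\lambda-\lambda^2\Lambda^2)>|||\sigma^{-1}|||^2$ is precisely what makes the resulting exponent ensure that $s\mapsto g_n(s)/(T-s)^2$ is integrable on $[0,T)$. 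Consequently the entropy $\sup_{n,\,t<T}\E[R_{t\wedge\zeta_n}\log R_{t\wedge\zeta_n}]$ is finite; by the de la Vall\'ee-Poussin criterion the family $\{R_{t\wedge\zeta_n}\}$ is uniformly integrable, so the limits in the statement exist and $(R_{s\wedge\zeta})_{s\in[0,T]}$ is a uniformly integrable martingale, which is assertion (1).

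Assertions (2) and (3) follow from the same bound. For (2), since $|Y_{\zeta_n}|=n$ on $\{\zeta_n\le t\}$, Markov's inequality gives $n^2\,\Q(\zeta_n\le t)\le\E^{\Q}|Y_{t\wedge\zeta_n}|^2=g_n(t)\le\kappa_4$ for every $t<T$; letting $n\to\infty$ shows $\Q(\zeta\le t)=0$, and then $t\uparrow T$ gives $\Q(\zeta=T)=1$, so that $R_{T\wedge\zeta}=R_T$ and $\Q=R_T\Pb$. For (3), assertion (1) yields $\E^{\Q}\big[\int_0^T|Y_s|^2(T-s)^{-2}\,\der s\big]<+\infty$, hence $\int_0^T|Y_s|^2(T-s)^{-2}\,\der s<+\infty$ $\Q$-a.s.; since $(T-s)^{-2}$ is not integrable at $T$, this forces $Y_s\to0$ as $s\uparrow T$ along $\Q$-almost every path, i.e. $Y_T^{x,y}=0$ $\Q$-a.s.

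The hard part is the decay estimate for $g_n$: one must choose the coupling parameter $L$ large enough that the mean-reverting drift $-2L|Y|^2/(T-t)$ dominates both the quadratic-variation contribution coming from the $x$-dependence of $\sigma$ and the $b$-cross term near $t=T$, while keeping $h$ square-integrable under $\Q$. It is exactly here that the Lipschitz bound on $\sigma$, the constant $\Lambda$, and the quantitative condition $2(\lambda-\lambda^2\Lambda^2)>|||\sigma^{-1}|||^2$ of Hypothesis \ref{hypo f weakly dissipative, sigma invertible} are used, and this is the step I would adapt from \cite{WANG_HARNACK}.
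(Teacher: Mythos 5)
Your overall route is the one the paper takes: pass to the measure under which $\widetilde W$ is a Brownian motion on $[0,t\wedge\zeta_n]$, reduce the entropy $\E R_{t\wedge\zeta_n}\log R_{t\wedge\zeta_n}$ to $\tfrac12\E^{\Q}\int_0^{t\wedge\zeta_n}|h(s)|^2\der s$, control the latter by $\E^{\Q}\int_0^{t\wedge\zeta_n}|Y_s^{x,y}|^2(T-s)^{-2}\der s$ via an It\^o/dissipativity estimate exploiting the coupling drift $-LY_t^{x,y}/(T-t)$, then deduce (2) by Markov's inequality on $\{|Y_{\zeta_n}^{x,y}|=n\}$ and (3) from the $\Q$-a.s.\ finiteness of $\int_0^T|Y_s^{x,y}|^2(T-s)^{-2}\der s$. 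The only structural difference is that the paper applies It\^o to $|Y_t^{x,y}|^2/(T-t)$ rather than to $|Y_t^{x,y}|^2$: this puts $(2L-1-\varepsilon)\int_0^{t\wedge\zeta_n}|Y_s^{x,y}|^2(T-s)^{-2}\der s$ directly on the left-hand side of (\ref{estimee WANG}), so the key quantity is bounded for any $L>1/2$, with no differential inequality to integrate and no issue at the stopping time. Your version, which first derives $g_n(t)\lesssim (T-t)^{\min(L,2)}$ and then integrates against $(T-s)^{-2}$, needs $L>1$ and also glosses over the fact that $g_n(t)=\E^{\Q_n}|Y_{t\wedge\zeta_n}^{x,y}|^2$ does not literally satisfy the ODE you write: the coefficient $-L/(T-t)$ multiplies $\E[\mathds{1}_{\{t<\zeta_n\}}|Y_t^{x,y}|^2]$, not $g_n(t)$, and the discrepancy $n^2\Q_n(\zeta_n\le t)$ enters with the wrong sign. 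Working with the integral inequality for the stopped process, as the paper does, avoids this.

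The one point you should correct: the condition $2(\lambda-\lambda^2\Lambda^2)>|||\sigma^{-1}|||^2$ of Hypothesis \ref{hypo f weakly dissipative, sigma invertible} plays no role in this lemma and does not justify the choice $L=\lambda$ --- it does not even force $\lambda>1$ (for $\Lambda$ large only small $\lambda$ can satisfy it), so with $L=\lambda$ your own integrability criterion may fail. In the paper that condition is used only later, in the exponential-moment estimate (\ref{Novikov}) for $\langle I,I\rangle_T$, where $L$ is ultimately fixed through $\gamma L=\lambda$ with $\gamma$ small, hence $L$ large. For the present lemma $L$ is a free parameter of the coupling, and it suffices to take it larger than $1$ (larger than $1/2$ in the paper's version); beyond that, only boundedness of $\sigma^{-1}$ and $b$ is needed to dominate $|h|^2$ by $\kappa_0+\kappa_0 L^2|Y_s^{x,y}|^2(T-s)^{-2}$.
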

\begin{proof}
($1$) let $t \in [0,T)$ and be fixed. By an Itô's formula,
\begin{align*}
\frac{|Y_{t \wedge \zeta_n}^{x,y}|^2}{T-s} &= \frac{|x-y|^2}{T} +  2\int_0^{t \wedge \zeta_n} \left(\frac{Y_s^{x,y}}{T-s} , d(\widetilde{V}_t) - d(V_s^y) - \frac{L Y_s^{x,y}}{T-s}\right)\der s \\
&~~~~ - 2\int_0^{t \wedge \zeta_n} \left(\frac{Y_s^{x,y}}{T-s}, \left(\sigma(\widetilde{V}_t)-\sigma(V_s^y)\right)(b(\widetilde{V}_t) - b(V_t^y)) \right) \der s \\
&~~~~ + \int_0^{t \wedge \zeta_n} \frac{|Y_s^{x,y}|^2}{|T-s|^2} \der s + 2\int_0^{t \wedge \zeta_n}  \left(\frac{Y_s^{x,y}}{T-s},\left[\sigma(\widetilde{V}_s)- \sigma(V_s^y)\right]\der \widetilde{W}_s \right) \\
&~~~~ + \int_0^{t \wedge \zeta_n} \frac{1}{T-s}\tr\left[(\sigma(\widetilde{V}_s) - \sigma(V_s^y))\transpose(\sigma(\widetilde{V}_s)- \sigma(V_s^y))\right] \der s\\
\end{align*}
By standard computations, since $d$ is dissipative, since $b$ is bounded and since $\sigma$ is Lipschitz and bounded,  for every $\varepsilon > 0$, we get
 \begin{align}\label{estimee WANG}
 \frac{|Y_{t \wedge \zeta_n}^{x,y}|^2}{T-s}  + (2L-1 - \varepsilon)\int_0^{t \wedge \zeta_n} \frac{|Y_s^{x,y}|^2}{|T-s|^2} \der s &\leq \frac{|x-y|^2}{T} + \frac{C}{\varepsilon} (t \wedge \zeta_n) \nonumber \\ 
 &~~~~ + 2\int_0^{t \wedge \zeta_n}  \left(\frac{Y_s^{x,y}}{T-s},\left[\sigma(\widetilde{V}_s)- \sigma(V_s^y)\right]\der \widetilde{W}_s \right).
 \end{align}
 By the Girsanov theorem, $(\widetilde{W}_s)_{s \leq t \wedge \zeta_n }$ is a standard Brownian motion under the probability measure $R_{t \wedge \zeta_n} \Pb$. So, taking expectation $\E^{t,n}$ with respect to $R_{t \wedge \zeta_n} \Pb$, we arrive at
 \begin{align}\label{estimee sous E t n}
 \E^{t,n} \int_0^{t \wedge \zeta_n} \frac{|Y_s^{x,y}|^2}{|T-s|^2} \der s \leq \frac{|x-y|^2}{T} + \frac{C}{\varepsilon}T, ~~~~ s \in [0,T), n \geq 1.
 \end{align}
By the definitions of $R_t$ and $\widetilde{W}_t$, we have for every $s \leq t \wedge \zeta_n$,
\begin{align*}
\log R_s &= -\int_0^s h(r) \der \widetilde{W}_r + \frac{1}{2}\int_0^s |h(r)|^2 \der r\\
&\leq  -\int_0^s h(r) \der \widetilde{W}_r  + C\left(1+\int_0^s\frac{L^2 |Y_r^{x,y}|^2}{|T-r|^2}\der r \right)
\end{align*} 
 And then, taking the expectation, we obtain with (\ref{estimee sous E t n}):
 \begin{align*}
 \E R_{t \wedge \zeta_n} \log R_{t \wedge \zeta_n} = \E^{t,n} \log R_{t \wedge \zeta_n} \leq \frac{|x-y|^2}{T} + \frac{C}{\varepsilon}T, ~~~~ t \in [0,T), n \geq 1.
 \end{align*}
By the martingale convergence theorem and the Fatou lemma, $(R_{s \wedge \zeta})_{s \in [0,T]}$ is a well-defined martingale with
\begin{align*}
 \E R_{t \wedge \zeta} \log R_{t \wedge \zeta} \leq \frac{|x-y|^2}{T} + \frac{C}{\varepsilon}T, ~~~~ t \in [0,T].
\end{align*}
To see that $(R_{s \wedge \zeta})_{s \in [0,T]}$ is a martingale, let $0 \leq s < t \leq T$. By the dominated convergence theorem and the martingale property of $(R_{t \wedge \zeta_n \wedge (T-1/n)})$, we have
\begin{align*}
\E(R_{t \wedge \zeta} | \mathscr{F}_s) &= \E(\lim_{n \rightarrow + \infty } R_{t \wedge \zeta_n \wedge (T-1/n)} | \mathscr{F}_s) = \lim_{n \rightarrow + \infty} \E(R_{t \wedge \zeta_n \wedge (T-1/n)} | \mathscr{F}_s) \\
& = \lim_{n \rightarrow + \infty} R_{s \wedge \zeta_n} = R_{s \wedge \zeta}.
\end{align*}
($2$)Since $\widetilde{W}_t$ is a standard Brownian motion up to $T \wedge \zeta$, it follows from (\ref{estimee WANG}) that 
\begin{align*}
\frac{n^2}{T} \Q(\zeta_n \leq t) \leq \E^{\Q}\frac{|Y_{t \wedge \zeta_n}^{x,y}|^2}{T-(t \wedge \zeta_n)} \leq \frac{|x-y|^2}{T} + \frac{C}{\varepsilon} T
\end{align*}
 holds for $n \geq 1$ and $t \in [0,T)$. By letting $n \nearrow + \infty$, we obtain $\Q(\zeta \leq t) = 0$ for all $t \in [0,T)$. This is equivalent to $\Q(\zeta  = T) = 1$ according to the definition of $\zeta$.\\
($3$) Let 
\begin{align*}
\overline{\zeta} = \inf \{ t \in [0,T]: Y_{t}^{x,y} = 0 \}
\end{align*}
and set $\inf \varnothing = + \infty$ by convention. We claim that $\overline{\zeta} \leq T$ and thus, $Y_T^{x,y} = 0$, $\Q$-a.s. Indeed, if for some $w \in \Omega$ such that $\overline{\zeta} > T$, by the continuity of the process we have
\begin{align*}
\inf_{t \in [0,T]} |Y_t^{x,y}|^2(w) > 0.
\end{align*}
So,
\begin{align*}
\int_0^T \frac{|Y_s^{x,y}|^2}{|T-s|^2} \der s = \infty
\end{align*}
holds on the set $\{ \overline{\zeta} > T \}$. Now, since inequality (\ref{estimee WANG}) still hold with $\zeta_n$ replaced by $\zeta$ and since $\zeta = T$ $\Q$-a.s., if we take the expectation with respect to $\Q$, then
\begin{align*}
\E^{\Q} \int_0^T \frac{|Y_s^{x,y}|^2}{|T-s|^2} \der s \leq \frac{|x-y|^2}{T} + \frac{C}{\varepsilon} T < +\infty.
\end{align*}
 Therefore $\Q(\overline{\zeta} > T) = 0$. Therefore, $Y_T^{x,y} = 0$, $\Q$-a.s.
\end{proof}

Hence the can apply the Girsanov theorem. Therefore, there exist a new probability measure $\Q$ under which $\widetilde{W}$ is a Brownian motion. Thus, under $\Q$, $\widetilde{V}$ has the law $\mu^x$ whereas under $\Pb$ it has the law $\widetilde{\mu}$. Of course $\mu^x$ and $\widetilde{\mu}$ are equivalent. We deduce that for every $\delta >0$,
\begin{align}\label{estimee rapport densite}
\int_{\mathscr{C}([0,T],\R^d)} \left( \frac{\der \mu^x}{\der \widetilde{\mu}} \right)^{2+\delta} \der \widetilde{\mu} &= \int_{\Omega} \left( \frac{\der \Q}{\der \Pb } \right)^{2+\delta} \der \Pb \nonumber \\
& =  \int_{\Omega} \left( \frac{\der \Q}{\der \Pb } \right)^{1+\delta} \der \Q \nonumber \\
&= \E^{\Q} \left(\exp\left((1+\delta)I_T-\frac{1+\delta}{2}\langle I,I\rangle_T\right)\right)  \nonumber \\
&\leq \sqrt{\E^{\Q} \left( \exp(2(1+\delta) I_T - 2(1+\delta)^2)\langle I,I\rangle_T ) \right)}\nonumber \\
& ~~~~~~\times \sqrt{\E^{\Q} \left( \exp( (2(1+\delta)^2 - (1+\delta)) \langle I,I \rangle_T ) \right)} \nonumber \\
&= \sqrt{\E^{\Q} \left( \exp( (1+\delta)(1+2\delta) \langle I,I \rangle_T ) \right)}.
\end{align}

We are going to show that we can pick $L > 0$ and $\delta > 0$ such that:
\begin{align*}
\E^{\Q} \left( \exp( (1+\delta)(1+2\delta) \langle I,I \rangle_T ) \right) < + \infty.
\end{align*}
Indeed, we have, for every $\iota >0$
\begin{align*}
\E^{\Q} (\exp ((1+\delta)(1+2\delta) \langle I,I \rangle_T)  ) \leq C \E^{\Q} \left( \exp \left( (1+\delta)(1+2\delta)|||\sigma^{-1} |||^2 \left((1+\iota)\int_0^T \frac{L^2 |Y_s^{x,y}|^2}{|T-s|^2} \right) \der s \right) \right)
\end{align*}
Now, by Itô's formula (using \ref{equation Y sous Q}):
\begin{align*}
\frac{L^2 |Y_t^{x,y}|^2}{T-t} &= \frac{L^2|x-y|^2}{T} + 2\int_0^t \left( \frac{L^2 Y_s^{x,y}}{T-s} , \left(d(V_s^y  + Y_s^{x,y}) - d(V_s^y) - \frac{L Y_s^{x,y}}{T-s}\right)\der s \right)\\
&  - 2\int_0^t \left( \frac{L^2 Y_s^{x,y}}{T-s} , (\sigma(V_s^y + Y_s^{x,y}) - \sigma(V_s^y))(b(V_s^y + Y_s^{x,y}) - b(V_s^y)) \der s \right)\\
&+ 2\int_0^t \left( \frac{L^2 Y_s^{x,y}}{T-s} ,(\sigma(V_s^y + Y_s^{x,y}) - \sigma(V_s^y))\der \widetilde{W}_s \right) + \int_0^t \frac{L^2 |Y_s^{x,y}|^2}{|T-s|^2} \der s \\
& + \int_0^t \frac{L^2}{T-s} \tr \left[ [\sigma(V_s^y  + Y_s^{x,y}) - \sigma(V_s^y)]\transpose[\sigma(V_s^y  + Y_s^{x,y}) - \sigma(V_s^y)]  \right] \der s.
\end{align*}
Therefore, since $d$ is dissipative, since $b$ is bounded and since $\sigma$ is Lipshitz and bounded,
\begin{align*}
\frac{L^2 |Y_t^{x,y}|^2}{T-t}  + 2\left(L-\frac{1}{2}\right)\int_0^t \frac{L^2|Y_s^{x,y}|^2}{|T-s|^2} \der s \leq \frac{L^2|x-y|^2}{T} + J_t + CL^2\int_0^t \frac{|Y_s^{x,y}|}{T-s} \der s
\end{align*}
where $J_t = 2\int_0^t \left( \frac{L^2 Y_s^{x,y}}{T-s} ,(\sigma(V_s^y + Y_s^{x,y}) - \sigma(V_s^y))\der \widetilde{W}_s \right) $. Note that 
\begin{align*}
\der \langle J,J \rangle_t &= 4 L^4\frac{|\transpose(Y_s^{x,y})(\sigma(V_s^y + Y_s^{x,y}) - \sigma(V_s^y))|^2}{|T-s|^2} \der t \\
& \leq 4 L^4 \Lambda^2 \frac{|Y_s^{x,y}|^2}{|T-s|^2}\der t.
\end{align*} 
Therefore, taking $t = T$, for every $\varepsilon >0$, for every $\gamma > 0$, we have
\begin{align*}
2\left(L -\frac{1}{2} - 
\frac{\varepsilon}{4} - \gamma L^2 \Lambda^2 \right) \int_0^T \frac{L^2|Y_s^{x,y}|^2}{|T-s|^2} \der s \leq \frac{L^2|x-y|^2}{T}  + \frac{C^2}{2\varepsilon} T + J_T - \frac{\gamma}{2} \langle J,J \rangle_T.
\end{align*}
Therefore, for $\varepsilon$ small enough, we obtain by multiplying by $\gamma$
\begin{align*}
2\gamma \left(L -\frac{1}{2} - 
\frac{\varepsilon}{4} - \gamma L^2 \Lambda^2 \right) \int_0^T \frac{L^2|Y_s^{x,y}|^2}{|T-s|^2} \der s \leq \frac{\gamma L^2|x-y|^2}{T}  + \frac{\gamma C^2}{2\varepsilon} T + \gamma J_T - \frac{\gamma^2}{2} \langle J,J \rangle_T.
\end{align*}
Therefore, 
\begin{align*}
\E^{\Q} \left(\exp \left( 2\gamma \left(L -\frac{1}{2} - 
\frac{\varepsilon}{4} - \gamma L^2 \Lambda^2 \right) \int_0^T \frac{L^2|Y_s^{x,y}|^2}{|T-s|^2} \der s \right) \right) \leq \exp\left(\frac{\gamma L^2|x-y|^2}{T}  + \frac{\gamma C^2}{2\varepsilon} T\right)
\end{align*}
Since by Hypothesis \ref{hypo f weakly dissipative, sigma invertible}, there exists $\lambda > 0$ such that
\begin{align*}
2(\lambda - \lambda^2 \Lambda^2) > |||\sigma^{-1}|||^2,
\end{align*}
it is enough to take $\iota > 0$, $\varepsilon > 0$, $\delta$ small enough, $\gamma L = \lambda$ and $\gamma$ small enough such that:
\begin{align*}
2\left(\gamma L -\frac{\gamma}{2} - \frac{\gamma \varepsilon}{4} 
- \gamma^2 L^2 \Lambda^2 \right) = 2\left(\lambda -\frac{\gamma}{2} 
- \frac{\gamma \varepsilon}{4} - \lambda^2 \Lambda^2 \right) >(1+\delta)(1+2\delta)(1+\iota) ||| \sigma^{-1}|||^2,
\end{align*}
which shows that:
\begin{align}\label{Novikov}
\E^{\Q} \left( \exp( (1+\delta)(1+2\delta) \langle I,I \rangle_T ) \right) < + \infty.
\end{align}
Therefore, with (\ref{estimee rapport densite}):
\begin{align}\label{estimee sur le rapport des mesures}
\int_{\mathscr{C}([0,T],\R^d)} \left( \frac{\der \widetilde{\mu}}{\der \mu^x} \right)^{2+\delta} \der \mu^x \leq \kappa_5.
\end{align}

We recall the following result (see for instance \cite{MATTINGLY_EXPONENTIAL}).

\begin{prop}\label{prop coupling 1}
Let $(\mu_1,\mu_2)$ be two probability measures on a same space $(E,\mathscr{E})$ then 
\begin{align*}
|| \mu_1 - \mu_2 ||_{TV} = \min \Pb (Z_1 \neq Z_2)
\end{align*}
where the minimum is taken on all coupling $(Z_1,Z_2)$ of $(\mu_1,\mu_2)$. Moreover, there exists a coupling which realizes the infimum. We say that it is a maximal coupling. It satisfies 
\begin{align*}
\Pb(Z_1 = Z_2,Z_1 \in \Gamma) = \mu_1 \wedge \mu_2 (\Gamma), ~~~\Gamma \in \mathscr{B}(E).
\end{align*}
Moreover, if $\mu_1$ and $\mu_2$ are equivalent and if 
\begin{align*}
\int_E \left( \frac{\der \mu_2}{\der \mu_1} \right)^{p+1} \der \mu_1 \leq C
\end{align*}
for some $p>1$ and $C>1$ then
\begin{align*}
\Pb(Z_1 = Z_2) = \mu_1 \wedge \mu_2 (E) \geq \left( 1 - \frac{1}{p} \right)\left( \frac{1}{pC}\right)^{1/(p-1)}.
\end{align*}
\end{prop}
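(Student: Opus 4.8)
The plan is to treat the three assertions separately, since they are of different natures. The easy half of the characterization is the coupling inequality $||\mu_1 - \mu_2||_{TV} \le \Pb(Z_1 \ne Z_2)$ for an arbitrary coupling. I would start from the representation $||\mu_1 - \mu_2||_{TV} = \sup_{\Gamma \in \mathscr{E}} (\mu_1(\Gamma) - \mu_2(\Gamma))$ and write, for every coupling $(Z_1,Z_2)$ of $(\mu_1,\mu_2)$ and every $\Gamma \in \mathscr{E}$,
\[
\mu_1(\Gamma) - \mu_2(\Gamma) = \Pb(Z_1 \in \Gamma) - \Pb(Z_2 \in \Gamma) \le \Pb(Z_1 \in \Gamma, Z_2 \notin \Gamma) \le \Pb(Z_1 \ne \ Z_2).
\]
Taking the supremum over $\Gamma$ gives $||\mu_1 - \mu_2||_{TV} \le \Pb(Z_1 \ne Z_2)$, hence $||\mu_1 - \mu_2||_{TV} \le \inf \Pb(Z_1 \ne Z_2)$.

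To show that the infimum is attained, and to obtain the overlap identity, I would construct the maximal coupling explicitly. Fix a common dominating measure $\nu$ (for instance $\nu = \mu_1 + \mu_2$), write $f_i = \der \mu_i / \der \nu$, and set $m := \int_E f_1 \wedge f_2 \, \der \nu = (\mu_1 \wedge \mu_2)(E)$. I then define $(Z_1,Z_2)$ as a mixture: with probability $m$ draw $Z_1 = Z_2$ from the normalized overlap $m^{-1}(f_1 \wedge f_2)\,\nu$, and with probability $1-m$ draw $Z_1$ and $Z_2$ independently from the normalized residuals $(1-m)^{-1}(f_i - f_1 \wedge f_2)\,\nu$. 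A direct verification shows that the marginals are $\mu_1$ and $\mu_2$, that $\Pb(Z_1 = Z_2, Z_1 \in \Gamma) = \int_\Gamma f_1 \wedge f_2 \, \der \nu = (\mu_1 \wedge \mu_2)(\Gamma)$, and that $\Pb(Z_1 = Z_2) = m$. Consequently $\Pb(Z_1 \ne Z_2) = 1 - m = ||\mu_1 - \mu_2||_{TV}$, which attains the infimum of the first step and establishes the stated identity.

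For the quantitative bound I would take $\nu = \mu_1$, so that, writing $g = \der \mu_2 / \der \mu_1$ (which is positive $\mu_1$-a.s. because $\mu_1$ and $\mu_2$ are equivalent), one has $(\mu_1 \wedge \mu_2)(E) = \int_E \min(1,g)\, \der \mu_1$ together with the constraint $\int_E g \, \der \mu_1 = 1$. The key point is that $g$ cannot charge the region $\{g > M\}$ too much: fixing a truncation level $M \ge 1$ and combining Hölder's and Markov's inequalities with the assumed moment bound on the density $g$, one obtains a tail estimate of the form $\int_{\{g \le M\}} g \, \der \mu_1 \ge 1 - C\,M^{-(p-1)}$. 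Since $\min(1,g) \ge g/M$ on $\{g \le M\}$ and $p>1$ makes the exponents positive, this yields
\[
(\mu_1 \wedge \mu_2)(E) \;\ge\; \frac{1}{M}\int_{\{g \le M\}} g\, \der \mu_1 \;\ge\; \frac{1}{M} - \frac{C}{M^{p}} .
\]
Optimizing the right-hand side over $M \ge 1$, the optimal level is $M = (pC)^{1/(p-1)}$, and substituting it reproduces exactly the claimed constant $\left(1 - \tfrac1p\right)\left(\tfrac1{pC}\right)^{1/(p-1)}$.

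The first two steps are routine measure theory, the only care being the bookkeeping of the residual measures in the maximal coupling (and checking that the marginals come out right). The genuine work lies in the last step: extracting from a single moment bound a tail estimate with the exponent that makes the final optimization match the stated constant. Getting that exponent and the optimization to produce precisely $\left(1 - \tfrac1p\right)\left(\tfrac1{pC}\right)^{1/(p-1)}$ is where I would expect to spend the most effort, and where the interplay between the Hölder and Markov exponents must be tracked carefully.
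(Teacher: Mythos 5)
The paper does not actually prove this proposition: it is quoted as a known result with a pointer to the literature (Mattingly et al.), so there is no in-paper argument to compare yours against. Your blind proof is correct and is the standard one: the inequality $\mu_1(\Gamma)-\mu_2(\Gamma)\leq \Pb(Z_1\in\Gamma,\,Z_2\notin\Gamma)$ gives the lower bound, the overlap/residual mixture (with the residuals mutually singular, so the independent branch contributes nothing to $\{Z_1=Z_2\}$) realizes it and yields the identity $\Pb(Z_1=Z_2,\,Z_1\in\Gamma)=\mu_1\wedge\mu_2(\Gamma)$, and the truncation argument is sound. I checked the only delicate point: with $g=\der\mu_2/\der\mu_1$ one has $\int_{\{g>M\}}g\,\der\mu_1\leq M^{-p}\int g^{p+1}\der\mu_1\leq CM^{-p}\leq CM^{-(p-1)}$ for $M\geq 1$, so your chain $\mu_1\wedge\mu_2(E)\geq M^{-1}-CM^{-p}$ holds, and the optimizer $M=(pC)^{1/(p-1)}$ (which is $\geq 1$ precisely because $p>1$ and $C>1$) gives exactly the stated constant $\left(1-\tfrac1p\right)\left(\tfrac1{pC}\right)^{1/(p-1)}$. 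The only caveat worth recording is that $\{Z_1=Z_2\}$ must be measurable, which holds in the Polish setting ($E=\mathscr{C}([0,T],\R^d)$) where the proposition is applied.
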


Let us mention the following proposition which can be found in \cite{ODASSO_EXPONENTIAL_MIXING_NON_ADDITIVE} under the name of Corollary $1.5$.
\begin{prop}\label{prop coupling 2}
Let $E$ be a Polish space, $(\Omega, \mathscr{F}, \Pb)$ be a probability space and $(U_1,U_2,\widetilde{U})$ be three random variables on $(\Omega, \mathscr{F}, \Pb)$ taking value in $E$.

Then there exists a triple $(u_1,u_2,\widetilde{u})$ such that $(u_2,\widetilde{u})$ is a maximal coupling of $(\mathscr{D}(U_2),\mathscr{D}(\widetilde{U}))$ and such that the law of $(u_1,\widetilde{u})$ is $\mathscr{D}(U_1,\widetilde{U})$.
 \end{prop}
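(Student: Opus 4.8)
The plan is to obtain the triple by a gluing construction along the shared coordinate $\widetilde{U}$, exploiting the fact that $E$ is Polish in order to disintegrate a joint law into a probability kernel. The only nontrivial inputs are Proposition \ref{prop coupling 1}, which supplies a maximal coupling of two prescribed marginals, and the existence of regular conditional distributions on Polish spaces.

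First I would apply Proposition \ref{prop coupling 1} to the pair $(\mathscr{D}(U_2),\mathscr{D}(\widetilde{U}))$, producing a maximal coupling whose law I call $\nu$ on $E \times E$; by construction its first marginal is $\mathscr{D}(U_2)$ and its second marginal is $\mathscr{D}(\widetilde{U})$. Next, since $E$ is Polish, I would disintegrate the joint law $\mathscr{D}(U_1,\widetilde{U})$ with respect to its second marginal $\mathscr{D}(\widetilde{U})$, obtaining a probability kernel $K$ on $E$ such that
\begin{align*}
\mathscr{D}(U_1,\widetilde{U})(\der a \times \der w) = K(w,\der a)\,\mathscr{D}(\widetilde{U})(\der w),
\end{align*}
so that $K(w,\cdot)$ plays the role of the conditional law of $U_1$ given $\widetilde{U}=w$.

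Then I would set up the concrete probability space $E^3$ equipped with its Borel $\sigma$-field and define the measure
\begin{align*}
\Pb(\der a_1 \times \der a_2 \times \der w) := K(w,\der a_1)\,\nu(\der a_2 \times \der w),
\end{align*}
taking $(u_1,u_2,\widetilde{u})$ to be the three coordinate projections. Checking the two claimed properties is then routine: integrating out $a_1$, and using that $K(w,\cdot)$ is a probability measure, recovers $\nu$ as the law of $(u_2,\widetilde{u})$, hence a maximal coupling of $(\mathscr{D}(U_2),\mathscr{D}(\widetilde{U}))$; while integrating out $a_2$, and using that the second marginal of $\nu$ is $\mathscr{D}(\widetilde{U})$, yields $K(w,\der a_1)\,\mathscr{D}(\widetilde{U})(\der w)$ as the law of $(u_1,\widetilde{u})$, which is exactly $\mathscr{D}(U_1,\widetilde{U})$ by the disintegration identity.

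The hard part is really just the existence of the kernel $K$: this is where the Polish assumption on $E$ is essential, since it guarantees that the conditional distribution of $U_1$ given $\widetilde{U}$ exists as a genuine probability kernel, which is precisely what makes the gluing measure above well defined and measurable. Once $K$ is in hand, the verification reduces to Fubini-type manipulations of product kernels, with no further analytic difficulty.
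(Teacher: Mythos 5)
Your argument is correct: this is the standard ``gluing lemma'' construction, and the two marginal checks you describe do go through exactly as you say, with the Polish hypothesis used only to guarantee the existence of the regular conditional distribution $K(w,\der a_1)$ of $U_1$ given $\widetilde{U}=w$. Note that the paper itself gives no proof of this proposition --- it is simply quoted as Corollary $1.5$ of the cited reference of Odasso --- so there is nothing internal to compare against; your disintegrate-and-glue argument is the canonical way to establish it. One small point worth making explicit: your triple $(u_1,u_2,\widetilde{u})$ lives on the auxiliary space $E^3$ rather than on the original $(\Omega,\mathscr{F},\Pb)$, which is harmless here because the statement (and its use in Step 2 of the coupling argument for the Kolmogorov semigroup estimate) only involves the joint laws, but you should say so rather than reuse the symbol $\Pb$ for the new measure, since $\Pb$ already denotes the underlying probability on $\Omega$.
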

 
 Let us apply Proposition \ref{prop coupling 2} to the three random variables $(Y^{x,y}, V^x, \widetilde{V})$ : there exists $(\widehat{Y}^{x,y} , V^{1,x,y} , \widetilde{V}^{2,x,y} )$ such that $(V^{1,x,y} , \widetilde{V}^{2,x,y} )$ is a maximal coupling of $(\mu^x, \widetilde{\mu})$ on $[0,T]$ and $\mathscr{D}(\widehat{Y}^{x,y}, \widetilde{V}^{2,x,y})  = \mathscr{D}(Y^{x,y}, \widetilde{V})$. Therefore, 
 \begin{align*}
 \mathscr{D}(\widetilde{V}^{2,x,y} - \widehat{Y}^{x,y}) =  \mathscr{D}(\widetilde{V} - {Y}^{x,y}) = \mathscr{D}(V^y) := \mu^y,
 \end{align*}
 and
 \begin{align*}
 \mathscr{D}(\widehat{Y}^{x,y}) = \mathscr{D}(Y^{x,y}).
 \end{align*}
Note that the last inequality implies that $\widehat{Y}_T^{x,y} = 0, ~~~~\Pb\text{-a.s.}$

  Now remark that, $({V}^{1,y,x} , V^{2,x,y} := \widetilde{V}^{2,x,y}- \widehat{Y}^{x,y})$ is a coupling of $(\mu^x, \mu^y)$ and 
 \begin{align*}
 \Pb({V}_T^{1,y,x} = V_T^{2,x,y}) =  \Pb({V}_T^{1,y,x}  = \widetilde{V}_T^{2,x,y}) \geq \Pb({V}^{1,y,x}  = \widetilde{V}^{2,x,y}).
 \end{align*}
Now, remarking that $(\widehat{V}^x , \widehat{V})$ is a maximal coupling of $(\mu^x, \widetilde{\mu})$ and applying Proposition \ref{prop coupling 1} thanks to equation (\ref{estimee sur le rapport des mesures}) we get that
\begin{align*}
\Pb({V}^{1,y,x}  = \widetilde{V}^{2,x,y}) \geq \frac{1}{4 \kappa_5},
\end{align*}
which leads to 
\begin{align}\label{resultat step 2}
 \Pb({V}_T^{1,y,x} = V_T^{2,x,y}) \geq \frac{1}{4 \kappa_5}.
\end{align}

\textbf{Step 3.} We construct a coupling for any initial value and any date. For $x = y$, we set 
\begin{align*}
(U_t^{1} , U_t^{2} ) = (V_t^x, V_t^x), ~~~t \in [0,T].
\end{align*}
If $x$ or $y$ is not in $B_R$, then
\begin{align*}
(U_t^{1} , U_t^{2} ) = (V_t^x,\overline{V}_t^y), ~~~t \in [0,T]
\end{align*}
where $\overline{V}_t^y$ is the solution of equation (\ref{SDE avec f}) driven by a Wiener process $\overline{W}$ independent of $W$. The coupling of the laws of $V^x$, $V^y$ is defined as follows. Assuming that we have built $(U_t^{1} , U_t^{2})$ on $[0,nT]$, we take $(V^{1,x,y},V^{2,x,y})$ as above independent on $(U_t^{1} , U_t^{2})$ on $[0,nT]$ and set 
\begin{align*}
(U_{nT+t}^{1},U_{nT +t}^{2}) = \left(V_t^{1,U_{nT}^{1},U_{nT}^2},V_t^{2,U_{nT}^{1},U_{nT}^{2}} \right), ~~~~\forall t \in [0,T].
\end{align*}
The Markov property of $(U^1,U^2)$ implies that $(U^1,U^2)$ is a coupling of $(\mathscr{D}(V^x), \mathscr{D}(V^y))$ on $[0,(n+1)T]$.

We then define the following sequence of stopping times
\begin{align*}
L_m = \inf \{ l > L_{m-1}, U_{lT}^{1} , U_{lT}^2 \in B_R \}
\end{align*}
with $L_0 = 0$. Evidently, ($\ref{esperance exponentiel}$) can be generalized to two solutions and we have:
\begin{align*}
\E(e^{\alpha L_1 T}) \leq \kappa_3(1+|x|^2+|y|^2)
\end{align*}
and 
\begin{align*}
\E\left(e^{\alpha(L_{m+1} - L_m)T}) | \mathscr{F}_{L_m T} \right) \leq \kappa_3(1+|U_{L_m T}^1|^2 + |U_{L_m T}^2|^2).
\end{align*}
It follows that 
\begin{align*}
\E\left(e^{\alpha L_{m+1} T} \right) &\leq \kappa_3 \E \left(e^{\alpha L_m T}(1+ |U_{L_m T}^1|^2 + |U_{L_m T}^2|^2 )\right) \\
& \leq \kappa_3 (1+2 R^2)\E(e^{\alpha L_{m} T})
\end{align*} 
and
\begin{align*}
\E \left(e^{\alpha L_{m} T} \right) \leq \kappa_3^{m}(1+2R^2)^{m-1}(1+|x|^2 + |y|^2).
\end{align*}
Now set
\begin{align*}
\ell_0 = \inf \{ \ell, U^1_{(L_\ell +1)T} = U^2_{(L_\ell +1)T} \}.
\end{align*}
Since $U_{L_\ell T}^1$, $U_{L_\ell T}^2 \in B_R$, we have by ($\ref{resultat step 2}$), 
\begin{align*}
\Pb (\ell_0 > \ell + 1 | \ell_0 > \ell) \leq \left( 1- \frac{1}{\kappa_5} \right).
\end{align*}
Since $\Pb (\ell_0 > \ell +1) = \Pb(\ell_0 > \ell +1 | \ell_0 > \ell) \Pb(\ell_0 > \ell)$, we obtain
\begin{align*}
\Pb(\ell_0 > \ell) \leq \left( 1 - \frac{1}{4 \kappa_5} \right)^\ell.
\end{align*}
Then for $\gamma \geq 0$
\begin{align*}
\E(e^{\gamma L_{\ell_0} T}) &\leq \sum_{\ell \geq 0} \E(e^{\gamma L_{\ell} T} \mathds{1}_{\ell = \ell_0}) \\
& \leq \sum_{\ell \geq 0} \Pb(\ell = \ell_0)^{1-\gamma/\alpha} (\E(e^{\alpha L_\ell T}) )^{\gamma / \alpha}\\
& \leq \sum_{\ell \geq \ell_0} \left( 1 - \frac{1}{\kappa_5} \right)^{(\ell - 1)(1 - \gamma/\alpha)} [ \kappa_3^{\ell}(1+2R^2)^{\ell-1}(1+|x|^2 + |y|^2) ]^{\gamma/\alpha}.
\end{align*}
We choose $\gamma \leq \alpha$ such that
\begin{align*}
\left( 1 - \frac{1}{4 \kappa_5} \right)^{1 - \gamma / \alpha}[\kappa_3(1+2 R^2)]^{\gamma/\alpha} < 1
\end{align*}
and deduce that 
\begin{align*}
\E(e^{\gamma L_{\ell_0} T}) \leq \kappa_6(1+|x|^2 + |y|^2).
\end{align*}
Since
\begin{align*}
n_0 = \inf \left\{ k, U^1_{kT} = U_{kT}^2 \right\} \leq L_{\ell_0} +1
\end{align*}
it follows that 
\begin{align*}
\E(e^{\gamma n_0 T}) \leq \kappa_6(1+|x|^2+|y|^2)
\end{align*}
and
\begin{align*}
\Pb(U_{kT}^1 \neq U_{kT}^2) &= \Pb( k < n_0) \\
& \leq \kappa_6(1+|x|^2+|y|^2)e^{-\gamma k T}.
\end{align*}
Moreover, for  all $t\in [0,T)$
\begin{align*}
\Pb(U_{kT+t}^1 \neq U_{kT +t}^2) &\leq \Pb(U_{kT}^1 \neq U_{kT}^2) \\
&\leq \kappa_6(1 + |x|^2 + |y|^2)e^{-\gamma k T}\\
& \leq \kappa_7 (1+|x|^2 + |y|^2)e^{-\gamma (kT + t)}.
\end{align*}
We deduce for $\phi \in B_b(\R^d)$ 
\begin{align*}
|\mathscr{P}_t\left[\Phi\right](x) - \mathscr{P}_t\left[\Phi\right](y) | \leq C(1+|x|^2+|y|^2)e^{-\mu t}|\Phi|_0.
\end{align*}

\qed

\section{Proof of Lemma \ref{lemm convergence processus prenalise}}
We follow the proof of the part $3$ of \cite{MENALDI_STOCHASTIC_VARIATIONNAL_INEQUALITY_FOR_REFLECTED_DIFFUSION}. We need to adapt this proof because in our case, the set in which the process is reflected is not bounded. Therefore convergences are not uniform in $x$ anymore. In our case, the dissipativity of the process is enough to avoid the boundedness of $\G$. We will use the following notation $\beta(x) = (x-\Pi(x))$. Note that $F_n(x) = -2n \beta(x)$. We recall the following properties of the penalization term:
\begin{align}\label{prop de la distance convexe 0}
(x'-x , \beta(x)) \leq 0, ~~~\forall x \in \R^d, \forall x \in \overline{G},
\end{align}
\begin{equation}\label{prop de la distance convexe}
(x'-x , \beta(x) ) \leq (\beta(x') , \beta(x) ), ~~~\forall x,x' \in \R^n, 
\end{equation}
\begin{equation}\label{convexe admet un centre}
\exists c \in \G,~~\gamma >0, ~~\forall x \in \R^n,~~(x-c ,\beta(x)) \geq \gamma |\beta(x)|.
\end{equation}
In what follow, $C$ is a constant which may vary from line to line and which may depends on $x$ and $T$ but which is independent on $n$.

In a first time, we show that for any $1 \leq p < \infty$, there exists $C > 0$
\begin{align}\label{estimee n beta p}
\E\left[ \left( n \int_0^T |\beta(X_t^{x,n})| \der s\right)^p \right] \leq C, \forall n \in \N.
\end{align}
For $p=2$, Itô's formula gives us:
\begin{align*}
|X_t^{x,n}-c|^2 = |x-c|^2 + 2\int_0^t (X_s^{x,n}-c , f(X_s^{x,n})\der s + F_n(X_s^{x,n})\der s + \sigma(X_s^{x,n}) \der W_s)\\ + \int_0^t \sum_{i,j}(\sigma(X_s^{x,n}) \transpose \sigma(X_s^{x,n}))_{i,j} \der s.
\end{align*}
Using inequality (\ref{convexe admet un centre}), the fact that $\sigma$ is bounded and Remark (\ref{rema dissipativity implies previous conditions}) we deduce:
\begin{align}\label{estimee terme de penalisation avec le centre}
4n\gamma\int_0^t |\beta(X_s^{x,n})| \der s \leq |x-c|^2 + 2\int_0^t C \der s + 2\int_0^t (X_s^{x,n}-c , \sigma(X_s^{x,n}) \der W_s).
\end{align}
By a BDG inequality, using the fact that $|\sigma|$ is bounded:
\begin{align*}
\E \left[ \left| \int_0^T (X_s^{x,n}-c , \sigma(X_s^{x,n}) \der W_s ) \right|^p \right] &\leq C \E \left[ \left(\int_0^T |X_s^{x,n}-c|^2 \der s \right)^{p/2} \right].
\end{align*}
Now, by Lemma \ref{lemm estimates dissipativity},  it follows that the process $X^{x,n}$ has bounded moments of all orders independent of $n$, thus 
\begin{align*}
\E \left[ \left| \int_0^T (X_s^{x,n}-c , \sigma(X_s^{x,n}) \der W_s ) \right|^p \right] &\leq C,
\end{align*}
which leads to 
\[
\E\left[ \left( n \int_0^T |\beta(X_t^{x,n})|\der s \right)^p \right] \leq C, \forall n \in \N,
\]
for a constant $C$ which does not depend on $n$.

Now we prove that $\forall p > 2$, 
\[ \E \left( \sup_{0 \leq t \leq T } |\beta(X_t^{x,n})|^p \right) \leq \frac{C}{n^{p/2-1}}. \]

We apply Itô's formula to the function $\varphi(x) = |x-\Pi(x)|^p$ where $\beta(x) = x-\Pi(x)$. Note that $F_n(x) = -2n \beta(x)$. It is well known that for a regular boundary,  for all $p > 2$, $\varphi$ is $\mathscr{C}^2$ on $\R^d$ and $\nabla \varphi(x) = 2(x-\Pi(x))$. We recall the following formulas for the derivatives of $\varphi$, 
\[ \nabla \varphi(x) = p|\beta(x)|^{p-2}\beta(x),\]
\[ \nabla^2 \varphi(x) = p|\beta(x)|^{p-2} \nabla \beta(x) + p(p-2)|\beta(x)|^{p-4}(\beta(x)	\transpose \beta(x)).\]
As $\nabla \beta(x)$ is a numerical matrix one can deduce the following inequality: 
\[|\nabla^2 \varphi(x)|  \leq C|\beta(x)|^{p-2},\]
for a constant $C$ which depends only on $p$ and $d$.

We use Itô's formula, for all $p >2$,
\begin{align*}
\varphi(X_t^{x,n}) = &\int_0^t (\nabla \varphi(X_s^{x,n}) , d(X_s^{x,n})+b(X_s^{x,n})+F_n(X_s^{x,n})) \der s\\ 
 &+ \int_0^t (\nabla \varphi(X_s^{x,n}) , \sigma(X_s^{x,n}) ) \der W_s \\
 &+\frac{1}{2}\int_0^t \sum_{i,j} (\nabla^2 \varphi(X_s^{x,n}))_{i,j} (\sigma(X_s^{x,n}) \transpose \sigma(X_s^{x,n}))_{i,j}  \der s.
\end{align*}
Therefore,
\begin{align}\label{eq estimee sur la distance varphi}
\varphi(X_t^{x,n}) + 2pn\int_0^t \varphi(X_s^{x,n}) \der s &\leq  \int_0^t (\nabla\varphi(X_s^{x,n}),d(X_s^{x,n})+b(X_s^{x,n})) \der s  \\
& ~~~  + \int_0^t (\nabla \varphi(X_s^{x,n}) ,\sigma(X_s^{x,n}) \der W_s )+ C\int_0^t |\beta(X_s^{x,n})|^{p-2} \der s. \nonumber
\end{align}
From Young's inequality: $ab \leq a^q/q + b^{q'}/q'$ for some real numbers $q$ and $q'$ such that $1/q + 1/q' = 1$, we choose  $q=p/(p-2)$ and $q' = p/2$ so that, for $\alpha > 0 $:
\begin{align*}
|\beta(X_s^{x,n})|^{p-2} &= \alpha n^{(p-2)/p}|\beta(X_s^{x,n})|^{p-2} \times \frac{1}{\alpha n^{(p-2)/p}}\\
&\leq \alpha^{p/(p-2)} \frac{p-2}{p}  n|\beta(X_s^{x,n})|^{p}  + \frac{2}{p} \left(\frac{1}{\alpha n^{(p-2)/p}}\right)^{p/2}\\
& \leq  \alpha^{p/(p-2)} \frac{p-2}{p}  n\varphi(X_s^{x,n}) + \frac{2}{p} \frac{1}{\alpha^{p/2} n^{(p-2)/2}},
\end{align*}
and another Young's inequality applied with this time $q=p/(p-1)$ and $q' = p$ gives us: 
\begin{align*}
|(\nabla \varphi (X_s^{x,n}) , d(X_s^{x,n}) + b(X_s^{x,n}))| &\leq p|\beta (X_s^{x,n})|^{p-1}\times|(d(X_s^{x,n}) + b(X_s^{x,n})| \\
& \leq \alpha^{p/(p-1)}n(p-1)p^{1/p}|\beta(X_s^{x,n})|^{p} \\
&~~~~~~~~~~+ |d(X_s^{x,n})+b(X_s^{x,n})|^p/(pn^{p-1}\alpha^p)\\
& \leq \alpha^{p/(p-1)}n(p-1)p^{1/p} \varphi(X_s^{x,n})\\
& ~~~~~~~~~~+ (1+|X_s^{x,n}|^{p \nu})/(pn^{p-1}\alpha^p).
\end{align*}
Therefore using the second inequality of Lemma \ref{lemm estimates dissipativity} and the two above inequality we deduce, for $\alpha$ small enough:
\[
\E \left(n\int_0^t\varphi(X_s^{x,n}) \der s \right)  \leq  C\left(\frac{1}{n^{p-1}} +\frac{1}{n^{(p-2)/2}}\right)t,
\]
therefore,
\begin{align}\label{eq 14}
\E \left(\int_0^T\varphi(X_s^{x,n}) \der s \right)  \leq  \frac{C}{n^{p/2}}.
\end{align}

Now we come back to equation (\ref{eq estimee sur la distance varphi}).
 Taking the supremum over time and the expectation and using a BDG inequality we get:
 \begin{align}\label{eq C5}
 \E \sup_{0 \leq t \leq T} \varphi (X_t^{x,n}) &\leq \E \int_0^T |\nabla \varphi(X_s^{x,n})| \times |d(X_s^{x,n}) + b(X_s ^{x,n})| \der s \\& ~~~~~+ C\E \left[ \left(\int_0^T |\nabla \varphi (X_s^{x,n}) \sigma(X_s^{x,n}) |^2\der s \right)^{1/2} \right] +C \E\int_0^T |\beta(X_s^{x,n})|^{p-2} \der s .\nonumber
 \end{align}
 We call respectively $I_1$, $I_2$ and $I_3$ the three terms of the right hand side of (\ref{eq C5}). We have 
 \begin{align*}
 I_1 & \leq C\sqrt{\E \int_0^T |\nabla \varphi(X_s^{x,n})|^2 \der s} \times \sqrt{\E \int_0^T \left|1+|X_s^{x,n}|^{\nu} \right|^2 \der s}  \\
 & \leq C \sqrt{\E \int_0^T |\beta(X_s^{x,n})|^{2p-2} \der s} \\
 & \leq \frac{C}{n^{(p-1)/2}},
 \end{align*}
by using inequality (\ref{eq 14}) and Lemma \ref{lemm estimates dissipativity}.
 We also have 
  \begin{align*}
 I_2 &\leq  C\E \left[ \left( \int_0^T |\beta(X_s^{x,n}|^{2p-2} \der s \right)^{1/2} \right]\\
 &\leq  \frac{1}{\sqrt{2}} \E\sup_{0 \leq t \leq T} |\beta(X_s^{x,n})|^p + C'\E \left[ \int_0^T |\beta (X_s^{x,n}) |^{p-2} \der s \right],
 \end{align*}
 thanks to Young's inequality. Applying inequality (\ref{eq 14}) to the second member gives us:
 \begin{align*}
 I_2 \leq \frac{1}{\sqrt{2}} \E\sup_{0 \leq t \leq T} |\beta(X_s^{x,n})|^p + \frac{C}{n^{(p-2)/2}}.
 \end{align*}
Finally applying inequality (\ref{eq 14}) once again gives us:
\begin{align*}
I_3 \leq \frac{C}{n^{(p-2)/2}}.
\end{align*} 
The above estimates of $I_1$, $I_2$ and $I_3$ give us the following inequality, for all $p > 2$
\begin{align}\label{estimee E sup phi X penalise}
\E \sup_{0 \leq t \leq T} \varphi (X_t^{x,n}) &\leq \frac{C}{n^{(p-2)/2}}.
\end{align}
 Now, we claim that for all $1 \leq p < +\infty$, $0 < 2q < p$, $n,m \in \N$,
 \begin{align}\label{estimee E beta m beta n}
 \E\left[ \left( m\int_0^T|\transpose \beta(X_s^{x,n})  \beta(X_s^{x,m})| \der s\right)^p \right] \leq \frac{C}{n^{q}}.
 \end{align}
Indeed, we have
\begin{align*}
m\int_0^T |\transpose \beta(X_s^{x,n})  \beta(X_s^{x,m})| \der s &\leq AB\\
& := (\sup_{0 \leq s \leq T} |\beta(X_s^{x,n})|)\left( m \int_0^T |\beta(X_s^{x,m})| \der s \right).
\end{align*} 
 Since, for $r >2$
 \begin{align*}
 E\left[ (AB)^p  \right] \leq (E(A^{rp}))^{1/r}(E(B^{r'p}))^{1/r'}, ~~~~~r' = \frac{r}{r-1},
 \end{align*}
 from (\ref{estimee n beta p}) and (\ref{estimee E sup phi X penalise}), we get
 \begin{align*}
 \E((AB)^p)\leq   \frac{C}{n^{(rp-2)/(2r)}} = \frac{C}{n^{p/2 - 1/r}},
 \end{align*}
 which implies (\ref{estimee E beta m beta n}) for $r$ large enough.
 
 Now we will prove that if $2 < 2q < p < \infty$, there exists a constant $C$ independent on $n$ and $m$ such that 
 \begin{align}\label{Xn moins Xm de Cauchy pour la norme E sup puissance p}
 \E \left[ \sup_{0 \leq s \leq T} |X_s^{x,n}-X_s^{x,m}|^p \right] \leq C \left(\frac{1}{n}+\frac{1}{m}\right)^q, ~~~~\forall n, m \in \N^*.
 \end{align}
Indeed, applying Itô's formula, for all $0 \leq t \leq T < +\infty$:
\begin{align*}
|X_t^{x,n}-X_t^{x,m}|^2 &= 2\int_0^t \transpose(X_s^{x,n} - X_s^{x,m})((d+b)(X_s^{x,n})-(d+b)(X_s^{x,m}))\der s \nonumber \\
&~~~~~ - 4n\int_0^t \transpose(X_s^{x,n}-X_s^{x,m})\beta(X_s^{x,n})\der s \nonumber \\
&~~~~~ + 4m\int_0^t \transpose(X_s^{x,n}-X_s^{x,m})\beta(X_s^{x,m})  \der s\\
&~~~~~ +2\int_0^t \transpose(X_s^{x,n}-X_s^{x,m})(\sigma(X_s^{x,n}) - \sigma(X_s^{x,m})) \der W_s \nonumber \\
&~~~~~ +\int_0^t \sum_{i} [(\sigma(X_s^{x,n}) - \sigma(X_s^{x,m}))\transpose(\sigma(X_s^{x,n}) - \sigma(X_s^{x,m}))]_{i,i}\der s. \nonumber
\end{align*}
By hypothesis on $d$, $b$ and $\sigma$ and thanks to equation (\ref{prop de la distance convexe}) we obtain
\begin{align}\label{eq Xn - Xm}
|X_t^{x,n}-X_t^{x,m}|^2 &\leq  C\int_0^t |X_s^{x,n}-X_s^{x,m}|^2\der s \nonumber\\
&~~~~~~~ + 4\E n\int_0^t \transpose\beta(X_s^{x,n})\beta(X_s^{x,m})\der s + 4\E m\int_0^t \transpose\beta(X_s^{x,n})\beta(X_s^{x,m})\der s  \nonumber\\ &~~~~~~~+2\int_0^t \transpose(X_s^{x,n}-X_s^{x,m})(\sigma(X_s^{x,n}) - \sigma(X_s^{x,m})) \der W_s.
\end{align}

Now, if $I_t$ denotes the stochastic integral in $(\ref{eq Xn - Xm})$ , we have for $r > 1$
\begin{align*}
\E \left[ \sup_{0 \leq s \leq t} |I_s|^r \right]&\leq CE \left[ \left( \int_0^t | X_s^{x,n} - X_s^{x,m}  |^2 \der s \right)^{r/2}\right]. 
\end{align*}
Therefore, by virtue of property (\ref{estimee E beta m beta n}) and for a new constant $C$, $2r = p$, $q < p/2$ and $0\leq t \leq T$,  we deduce from (\ref{eq Xn - Xm})
\begin{align*}
\E \sup_{0 \leq s \leq t}\left[  |X_s^{x,n} - X_s^{x,m}|^p \right] &\leq C\int_0^t E \left[|X_s^{x,n}-X_s^{x,m}|^p\right] \der s +C\left(\frac{1}{n^q} + \frac{1}{m^q}\right) \\
&\leq C\int_0^t E \sup_{0 \leq u \leq s} \left[|X_u^{x,n}-X_u^{x,m}|^p\right] \der s +C\left(\frac{1}{n^q} + \frac{1}{m^q}\right),
\end{align*}
which implies (\ref{Xn moins Xm de Cauchy pour la norme E sup puissance p}) after using Gronwall's Lemma.
inequality (\ref{Xn moins Xm de Cauchy pour la norme E sup puissance p}) shows us that the process $(X^{x,n})_n$ is Cauchy in $\mathscr{S}^p$, $p>2$. Of course, taking the power $1/r$ for $r >1$ in equation (\ref{Xn moins Xm de Cauchy pour la norme E sup puissance p}) shows us that $(X^{x,n})_n$ is Cauchy in $\mathscr{S}^p$, for all $p \geq 1$.  Therefore we can define in  $\mathscr{S}^p$, for all $p \geq 1$:
\begin{align*}
X^x := \lim_{n \rightarrow + \infty} X^{x,n}.
\end{align*}
Furthermore, defining
\begin{align*}
\eta_t^{x,n} = -2n\int_0^t \beta(X_s^{x,n}) \der s,
\end{align*}
and remarking that the process $\eta^{x,n}$ satisfies the following equation
\begin{align*}
\eta_t^{x,n}  = X_t^{x,n}  - \int_0^t (d+b)(X_s^{x,n}) \der s - \int_0^t \sigma(X_s^{x,n})\der W_s,
\end{align*}
we can define in   $\mathscr{S}^p$, for all $p \geq 1$:
\begin{align*}
\eta^{x} :=  \lim_{n \rightarrow + \infty} \eta^{x,n}.
\end{align*}
Clearly, estimate (\ref{estimee n beta p}) shows that $\eta^x$ has locally bounded variation almost surely, and condition (\ref{estimee E sup phi X penalise}) implies that $X^x$ belongs to $\overline{G}$ almost surely. Now remark that property (\ref{prop de la distance convexe 0}) implies that, for all $T > 0$, for all progressively measurable process $z$ taking values in the closed set $\overline{G}$
\begin{align}\label{SVI inequality}
\int_0^T \transpose(X_s^{x,n}-z_s)\der \eta_s^{x,n} =  -2n\int_0^T\transpose(X_s^{x,n}-z_s) \beta(X_s^{x,n})   \der s \leq 0.
\end{align}
Now we would like to pass to the limit into the previous inequality. For that purpose let us recall the following deterministic Lemma.
\begin{lemm}
Let $y_n$ be a sequence of functions in $\mathscr{C}([0,T],\R^k)$ which converges uniformly to $y$. Let $\eta_n$ be a sequence of functions in $\mathscr{C}([0,T],\R^k)$ which converges uniformly to $\eta$ and such that there exists $C>0$ such that $||\eta_n||_{TV} \leq C$. Then
\begin{align*}
||\eta||_{TV} \leq C \text{~~~and~~~~}\int_0^T y_n \der \eta_n \underset{n\rightarrow + \infty}{\longrightarrow} \int_0^T y \der \eta.
\end{align*} 
\end{lemm}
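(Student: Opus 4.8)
The plan is to establish the two assertions separately, starting with the total-variation bound, which is also needed to guarantee that the limiting integral $\int_0^T y\,\der\eta$ makes sense. All integrals below are understood componentwise, as finite sums of scalar Riemann--Stieltjes integrals. For the bound $||\eta||_{TV}\le C$ I would invoke the lower semicontinuity of the total variation under pointwise convergence: for any subdivision $0=t_0<t_1<\dots<t_m=T$, uniform (hence pointwise) convergence gives $\sum_{j=0}^{m-1}|\eta(t_{j+1})-\eta(t_j)|=\lim_n\sum_{j=0}^{m-1}|\eta_n(t_{j+1})-\eta_n(t_j)|\le C$, and taking the supremum over all subdivisions yields $||\eta||_{TV}\le C$. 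Since $y$ is continuous and $\eta$ now has bounded variation, $\int_0^T y\,\der\eta$ is a well-defined Riemann--Stieltjes integral.

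For the convergence I would split the difference as
\[
\int_0^T y_n\,\der\eta_n-\int_0^T y\,\der\eta=\int_0^T(y_n-y)\,\der\eta_n+\left(\int_0^T y\,\der\eta_n-\int_0^T y\,\der\eta\right).
\]
The first term is controlled by the elementary estimate $\left|\int_0^T(y_n-y)\,\der\eta_n\right|\le\|y_n-y\|_\infty\,||\eta_n||_{TV}\le C\|y_n-y\|_\infty$, which tends to $0$ by uniform convergence of $y_n$. The delicate term is the second one, and here the obstruction is that $y$ is merely continuous, so one cannot integrate by parts directly against $\der\eta_n$: this is the main difficulty of the lemma.

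To overcome it I would approximate $y$ uniformly on $[0,T]$ by a family of $\mathscr{C}^1$ functions $p_\varepsilon$ (for instance polynomials, via Weierstrass) with $\|y-p_\varepsilon\|_\infty\le\varepsilon$. Using the uniform total-variation bound twice gives $\left|\int_0^T(y-p_\varepsilon)\,\der\eta_n\right|\le C\varepsilon$ for every $n$ and $\left|\int_0^T(y-p_\varepsilon)\,\der\eta\right|\le C\varepsilon$. For the smooth integrand $p_\varepsilon$ integration by parts is legitimate, namely $\int_0^T p_\varepsilon\,\der\mu=\big[p_\varepsilon\,\mu\big]_0^T-\int_0^T\mu(t)\,p_\varepsilon'(t)\,\der t$ applied with $\mu=\eta_n$ and with $\mu=\eta$. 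Since $\eta_n\to\eta$ uniformly, the boundary terms converge, and because $p_\varepsilon'$ is bounded and continuous we also have $\int_0^T\eta_n\,p_\varepsilon'\,\der t\to\int_0^T\eta\,p_\varepsilon'\,\der t$; hence $\int_0^T p_\varepsilon\,\der\eta_n\to\int_0^T p_\varepsilon\,\der\eta$ as $n\to+\infty$ for each fixed $\varepsilon$. Combining the three estimates gives $\limsup_n\big|\int_0^T y\,\der\eta_n-\int_0^T y\,\der\eta\big|\le 2C\varepsilon$, and letting $\varepsilon\to0$ concludes. The essential step is precisely this reduction of the continuous integrand to a smooth one, which converts the problematic $\der\eta_n$ into an ordinary Lebesgue integral that is stable under uniform convergence.
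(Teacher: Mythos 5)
Your argument is correct: lower semicontinuity of the total variation under pointwise convergence gives $||\eta||_{TV}\leq C$, the term $\int_0^T (y_n-y)\,\der \eta_n$ is killed by the uniform bound on $||\eta_n||_{TV}$, and the weak convergence $\int_0^T y\,\der\eta_n \rightarrow \int_0^T y\,\der\eta$ is properly handled by smoothing the integrand, integrating by parts, and using the uniform convergence of $\eta_n$, with the two $C\varepsilon$ error terms controlled by the total-variation bounds. The paper itself does not prove this lemma but merely cites Lemma 5.7 of G\'egout-Petit and Pardoux, so there is no argument to compare against; your proposal supplies a complete and self-contained proof of the cited fact.
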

\begin{proof}
See Lemma 5.7 in \cite{GEGOUT-PETIT_PARDOUX_EDSR_REFLECHIES_DANS_UN_CONVEXE}.
\end{proof}
Now we claim that we can extract a subsequence of $\left\{\int_0^T \transpose(X_s^{x,n}-z_s)\der \eta_s^{x,n}\right\}_n$ which converges $\Pb$-a.s. to $\int_0^T \transpose(X_s^{x}-z_s)\der \eta_s^{x}$. First, by equation (\ref{estimee terme de penalisation avec le centre}), we have 
\begin{align*}
2\gamma ||\eta^{x,n}||_{TV} \der s \leq |x-c|^2 + 2\int_0^t C \der s + 2\int_0^t (X_s^{x,n}-c , \sigma(X_s^{x,n}) \der W_s).
\end{align*}
Now remark that the right member of the inequality converges in probability to $|x-c|^2 + 2\int_0^t C \der s + 2\int_0^t (X_s^{x}-c , \sigma(X_s^{x}) \der W_s)$. Therefore, there exists a subsequence $(\phi(n))_n$ such that the right member of the inequality converges $\Pb$-a.s. and then $||\eta^{x,\phi(n)}||_{TV}$ is bounded $\Pb$-a.s. Furthermore, as $\E\sup_{0 \leq t \leq T}|X_t^{x,n}-X_t^{x}|^2 \underset{n \rightarrow + \infty}{ \longrightarrow} 0$, and $\E\sup_{0 \leq t \leq T}|\eta_t^{x,n}-\eta_t^{x}|^2 \underset{n \rightarrow + \infty}{ \longrightarrow} 0$  we can extract a subsequence $(\psi(n))_n$ of $(\phi(n))_n$ such that $X^{x,\psi(n)}$ converge uniformly to $X^{x}$ $\Pb$-a.s. and $\eta^{x,\psi(n)}$ converge uniformly to $\eta^{x}$ $\Pb$-a.s. Now, it is enough to apply the previous deterministic Lemma $\Pb$-a.s. to obtain the result, namely, for all $T > 0$, for all progressively measurable process $z$ taking values in the closed set $\overline{G}$
\begin{align*}
\int_0^T \transpose(X_s^{x}-z_s)\der \eta_s^{x}  \leq 0.
\end{align*}

By Lemma $2.1$ in \cite{GEGOUT-PETIT_PARDOUX_EDSR_REFLECHIES_DANS_UN_CONVEXE}, this implies that 
\begin{align*}
\int_0^T \mathds{1}_{\left\{ X_s^{x} \in G \right\} } \der \eta^{x}_s = 0, ~~~~~\Pb\text{-a.s.}
\end{align*}
and 
\begin{align*}
\eta_t^{x} = \int_0^t \nabla \phi(X_s^{x}) \der ||\eta^{x}||_{TV,s}.
\end{align*}
We define $\der K_s := \der ||\eta^x||_{TV,s}$. This shows that $(X^x,K^x)$ satisfies the SDE (\ref{SDE reflected}). 

Now we prove that the solution of the SDE (\ref{SDE reflected}) is unique. Let us assume that $(X^1,K^1)$ and $(X^2,K^2)$ are two solutions of (\ref{SDE reflected}) such that $K^1$ and $K^2$ have bounded variation on $[0,T]$, for all $T>0$ and such that for all $i =1,2$, for all continuous and progressively measurable process $z$ taking values in the closure $\overline{G}$ we have,
\begin{align*}
\int_0^T \transpose(X^i_s-z_s) \der \eta^{i}_s \leq 0,
\end{align*}  
where $\eta^i_t = \int_0^t \nabla\phi(X_s^i) \der K_s^i$.
Applying Itô's formula, one has
\begin{align*}
|X^1_t - X_t^2|^2 = &2\int_0^t \transpose (X_s^1-X_s^2)(d(X_s^1) - d(X_s^2)) \der s + 2\int_0^t \transpose (X_s^1-X_s^2)(b(X_s^1) - b(X_s^2)) \der s \\
&+2\int_0^t \transpose (X_s^1-X_s^2)(\der \eta_s^1 - \der \eta_s^2) \der s + 2\int_0^t \transpose (X_s^1-X_s^2)(\sigma(X_s^1)-\sigma(X_s^2))\der W_s\\
&+\int_0^t \tr [(\sigma(X_s^1) -\sigma(X_s^2)) \transpose(\sigma(X_s^1) -\sigma(X_s^2)) ] \der s.
\end{align*}
Since, $2\int_0^t \transpose (X_s^1-X_s^2)(\der \eta_s^1 - \der \eta_s^2) \der s \leq 0$, we easily get
\begin{align*}
\E\left[\sup_{0 \leq t \leq T}|X_t^1-X_t^2|^2\right] &\leq C\E \left[ \int_0^T  |X_s^1-X_s^2|^2 \der s \right] \\
&\leq C \left[ \int_0^T  \E \sup_{0 \leq u \leq s}|X_u^1-X_u^2|^2 \der s \right].
\end{align*}
Applying Gronwall's lemma allows us to conclude. 
\qed

\section*{Acknowledgements}
The author would like to thank his Ph.D. adviser Ying Hu for his helpful remarks and comments during the preparation of this paper. The author also thanks the Lebesgue Center of Mathematics for its support (program "Investissements d'avenir" --- ANR-11-LABX-0020-01) and the anonymous referee for the helpful comments.

\newpage

\bibliographystyle{hplain}
\bibliography{biblio}
\end{document}